\newtheorem{thm}{Theorem}[section]
\newtheorem{lem}[thm]{Lemma}
\theoremstyle{definition}
\newtheorem{defn}[thm]{Definition}
\theoremstyle{remark}
\numberwithin{equation}{section}
\newcommand{\ep}{\epsilon}
\newcommand{\pa}{\partial}
\newcommand{\R}{\mathbb{R}}
\newcommand{\bq}{\begin{equation}}
\newcommand{\eq}{\end{equation}}
\newcommand{\lt}{\left}
\newcommand{\rt}{\right}
\newcommand{\U}{\mathcal{U}}
\newcommand{\ml}{\mathcal{L}}
\begin{document}

\title[A sharp error analysis for the DG method of optimal control problems]
{A sharp error analysis for the discontinuous Galerkin method of optimal control problems}

\subjclass[2000]{65L05, 49J15, 49M25, 65L60}

\date{\today}

\author{Woocheol Choi}
\address[Woocheol Choi]{\newline
Department of Mathematics \newline  Sungkyunkwan University, Suwon 16419, Korea (Republic of)}
\email{choiwc@skku.edu}

\author{Young-Pil Choi}
\address[Young-Pil Choi]{\newline Department of Mathematics \newline Yonsei University, Seoul 03722, Korea (Republic of)}
\email{ypchoi@yonsei.ac.kr}

\begin{abstract}In this paper, we are concerned with a nonlinear optimal control problem of ordinary differential equations. We consider a discretization of the problem with the discontinuous Galerkin method with arbitrary order $r \in \mathbb{N}\cup \{0\}$. Under suitable regularity assumptions on the cost functional and solutions of the state equations, we provide sharp estimates for the error of the approximate solutions. Numerical experiments are presented supporting the theoretical results.
\end{abstract}

\maketitle

\tableofcontents

%%%%%%%%%%%%%%%%%%%%%%%%%%%%%%%%%%%%%%%%%%%%%%%%%%%%%
%
%
% \section{Introduction}
%
%
%%%%%%%%%%%%%%%%%%%%%%%%%%%%%%%%%%%%%%%%%%%%%%%%%%%%%

\section{Introduction}

In the present work, we discuss discontinuous Galerkin (DG) approximations to a nonlinear optimal control problem (OCP) of ordinary differential equations (ODEs). More precisely, we consider the following optimal control problem:
\bq\label{main_eq1}
\mbox{Minimize } J(u,x) := \int_0^T g(t,x(t),u(t))\,dt
\eq
subject to 
\begin{align}\label{main_eq2}
\left\{\begin{aligned}
x'(t) &= f(t,x(t),u(t)), \quad \mbox{a.e. on } [0,T], \cr
x(0) &= x_0, \cr
{u} &\in \U_{ad}, \quad \mbox{a.e. on } [0,T].
\end{aligned}\right.
\end{align}
Here $u(t) \in \R^m$ is the control, and $x(t) \in \R^d$ is the state of the system at time $t \in [0,T]$. Further, $g: [0,T] \times \R^d \times \R^m \to \R$ and $f: [0,T] \times \R^d \times \R^m \to \R^d$ are given, and the set of admissible controls $\U_{ad} \subset {\mathcal{U}:=L^{\infty}(0,T; \R^m)}$ is given by 
{
\[
\U_{ad} := \{ u(t) \in \R^m : u_\ell \leq u(t) \leq u_u\}
\]
for some $u_\ell, u_u \in \R^m$. }

There have been a lot of study on the numerical computation for the above problem. The numerical schemes need a discretization of the ODEs, for example, the Euler discretization for the OCPs of ODEs are well studied for sufficiently smooth optimal controls based on strong second-order optimality conditions \cite{Alt84, DH93, DH00}. For optimal control problems with control appearing linearly, the optimal control may be discontinuous, for an instance, bang-bang controller, and such conditions are not satisfied. In that respect, there have been many studies to develop new second-order optimality conditions for the optimal control problems with control appearing linearly \cite{AFS18, Fel03, Osm05, Osm07}.

 The Pseudo-spectral method is also popularly used for the discretization due to its capability of high-order accuracy for smooth solutions to the OCPs \cite{EKR, RK}. However, the high-order accuracy of the Pseudo-spectral method is known to be often lost for bang-bang OCPs, where the solutions may not be smooth enough. To handle this issue, Henriques et al. \cite{HLE} proposed a mesh refinement method based on a high-order DG method for the OCPs of ODEs. The DG method discretizes the time interval in small time subintervals, in which the weak formulation is employed. The test functions are usually taken as piecewise polynomials which can be discontinuous at boundaries of the time interval, see Section \ref{sec-2} for more detailed discussion. We refer to \cite{Bac16, E95, SS00} and references therein for DG methods for ODEs.

In this paper, we provide a rigorous analysis for the DG discretization applied to the nonlinear OCP \eqref{main_eq1}-\eqref{main_eq2} with arbitrary order $r \in \mathbb{N} \cup \{0\}$ for  general functions $f$ and $g$ with suitable smoothness. Motivated from a recent work by Neitzel and Vexler \cite{NV12}, we impose the non-degeneracy condition \eqref{ass_1} on an optimal control $\bar{u}$ of the OCP \eqref{main_eq1}-\eqref{main_eq2}. 
We obtain the existence and convergence results  for the semi-discretized case and the fully discretized case.  The rates of the convergence results depend on the regularity of the optimal solution $\bar{u}$ and  its adjoint state with the degree of piecewise polynomials mentioned above, see Section \ref{sec-2} for details.

It is worth noticing that the control is not required to be linear in the state equations \eqref{main_eq2}, and the control space $\U_{ad}$ allows to take into account discontinuous controls. 
The constraints for controls are defined by lower and upper bounds. Moreover, the cost functional is also given in a general form, it may not be quadratic.

For notational simplicity, we denote by $I:=(0,T)$, $X:= L^2(I;\R^d)$, and $(v,w)_I = (v,w)_{L^2(I;\R^d)}$. We also use simplified notations:
 \[
 \|\cdot\|_{L^p(I)} := \|\cdot\|_{L^p(I;\R^d)} \quad \mbox{and} \quad \|\cdot\|_{W^{p,\infty}(I)} := \|\cdot\|_{W^{p,\infty}(I;\R^d)} 
 \]
 for $1 \leq p \leq \infty$. Throughout this paper, {for any compact set $K \subset \R^m$, we assume that $f, g \in C([0,T]; W^{3,\infty}(\R^d \times K))$ satisfy 
\begin{equation}\label{eq-1-10}
\sup_{0 \leq t \leq T} \lt(\|f(t,\cdot,\cdot)\|_{W^{3,\infty}} + \|g(t,\cdot,\cdot)\|_{W^{3,\infty}}\rt)\leq M
\end{equation}
for some $M > 0$.}

We next introduce the control-to-state mapping $G: \U \to X \cap L^\infty(I;\R^d)$, $G(u) = x$, with $x$ solving \eqref{main_eq2}. It induces the cost functional $j: \U \to \R_+$, $u \mapsto J(u,G(u))$. This makes the optimal control problem \eqref{main_eq1}-\eqref{main_eq2} equivalent to
\bq\label{main_eq3}
\mbox{Minimize $j(u)$ subject to $u \in \U_{ad}$.} 
\eq

\begin{defn}\label{def_ls}A control $\bar u \in \U_{ad}$ is a local solution of \eqref{main_eq3} if there exists a constant $\ep > 0$ such that $j(u) \geq j(\bar u)$ holds for all $u \in \U_{ad}$ with $\|\bar u - u\|_{L^2 (I)} \leq \ep$.
\end{defn}
In the proof of the existence and convergence results,  the main task is to show that the strong convexity of $j$ induced by the second order condition \eqref{ass_1}  is preserved near  the optimal control $\bar{u}$ and also for its DG discretized version $j_h$. It will be achieved using the second-order analysis in Section \ref{sec-4}. As a preliminary, we also justify that $j$ and $j_h$ are twice differentiable, by showing the differentiability of the control-to-state mapping $G$ and its discretized  version $G_h$ in the appendix.

In Section \ref{sec-2}, we explain the DG discretization of the ODEs and the OCP. Then we present the main results for the semi-discretized case and provide some preliminary results. In Section \ref{sec-3}, the adjoint problems are studied. Section \ref{sec-4} is devoted to study the second order analysis of the cost functionals $j$ and $j_h$. In Section \ref{sec-5}, we prove the existence of the local solution and obtain the convergence rate for the semi-discretized case. Section \ref{sec-full} is devoted to establish the existence and convergence results for the fully discretized case. Finally, in Section \ref{sec-6}, we perform several numerical experiments for linear and nonlinear OCPs.  In Appendix \ref{app_d}, we obtain first and second order derivatives of the control-to-state mapping $G$.  Appendix \ref{app_c} is devoted to prove a Gronwall-type inequality for the discretization of the ODEs \eqref{main_eq2} involving the control variable. It is  used in Appendix \ref{app_e} to establish the differentiability of the discrete control-to-state mapping $G_h$ and obtain the derivatives. In Appendix \ref{app_a}, we prove Lemma \ref{lem-30} and Lemma \ref{lem-35}, which reformulate the first derivatives of the cost functionals in terms of the adjoint states. In Appendix \ref{app_b}, we derive the formulas on the second order derivatives of the cost functionals. 

%%%%%%%%%%%%%%%%%%%%%%%%%%%%%%%%%%%%%%%%%%%%%%%%%%%%%
%
%
% \section{DG formulation}
%
%
%%%%%%%%%%%%%%%%%%%%%%%%%%%%%%%%%%%%%%%%%%%%%%%%%%%%%

\section{DG formulation}\label{sec-2}
In this section, we describe the approximation of the OCP \eqref{main_eq1}-\eqref{main_eq2} with the DG method, and then we state the main results on the semi-discrete case. First we consider the discretization of the following ODEs:
\begin{equation}\label{eq-ode}
\left\{\begin{array}{lll}
x' (t)= F(t,x(t)), \quad t \in (0,T),
\\
x(0)=x_0,
\end{array}\right.
\end{equation}
where $x: [0,T] \rightarrow \mathbb{R}^d$, $F:(0,T) \times \mathbb{R}^d \rightarrow \mathbb{R}^d$ is uniformly Lipschitz continuous with respect to {$x$}, i.e., 
\begin{equation*}
\|F(t,u) -F(t,v)\|\leq L \|u -v\|,\quad u, v \in \mathbb{R}^d,~ t \in (0,T)
\end{equation*}
with a constant $L>0$. {By the Cauchy Lipschitz theorem, we have the existence and uniqueness of classical solution $x$ of \eqref{eq-ode}.}
\

Given an integer $N \in \mathbb{N}$ we consider a partition of $I$ into $N$-intervals $\{I_n\}_{n=1}^N$ given by $I_n = (t_{n-1}, t_n)$ with nodes $0=:t_0 < t_1 < \cdots < t_{N-1} < t_N := T$. Let $h_n$ be the length of $I_n$, i.e., $h_n = t_n - t_{n-1}$, and we set $h:=\max_{1 \leq n \leq N}h_n$. For a piecewise continuous function $\varphi :[0,T] \rightarrow \mathbb{R}^d$, we also define  
\[
\varphi^+_n := \lim_{t \rightarrow 0^{+}} \varphi(t_n + t), \quad 0 \leq n \leq N-1, \qquad \varphi^-_n := \lim_{t \rightarrow 0^{+}} \varphi(t_n - t), \quad 1 \leq n \leq N.
\]
We also denote the jumps across the nodes by  $[\varphi]_n := \varphi^+_n - \varphi^-_n$ for $1 \leq n \leq N-1$. For $r \in \mathbb{N} \cup \{0\}$ we define 
\[
X^r_h := \{ \varphi_h \in X : \varphi_h |_{I_n} \in P^r(I_n), \quad 1 \leq n \leq N\},
\]
where $P^r(I_n)$ represents the set of all polynomials of $t$ up to order $r$ defined on $I_n$ with coefficients in $\R^d$. Then the DG approximate solution $x_h$ of \eqref{eq-ode} is given as
\begin{equation}\label{eq-dode}
\sum_{n=1}^N \lt(x'(t) - F(t,x(t)), \varphi(t)\rt)_{I_n} + \sum_{n=2}^N ([x]_{n-1}, \varphi^+_{n-1}) + (x^+_0, \varphi^+_0) = (x_0, \varphi^+_0)
\end{equation}
for all {$\varphi \in X^r_h$}. Here $(\cdot , \cdot)$ denotes the inner product in $\mathbb{R}^d$, and 
\[
(A(t), B(t))_{I_n} = \int_{I_n} (A(t), B(t))\,dt
\]
for integrable functions $A, B: I_n \rightarrow \mathbb{R}^d$. 

We recall the error estimate for the DG approximation of \eqref{eq-ode} from {\cite[Corollary 3.15 \& Theorem 2.6]{SS00}.} 
\begin{thm}\label{thm-21} Let $x(t)$ be the solution of \eqref{eq-ode} such that $x \in W^{k, \infty}(I; \mathbb{R}^d)$ for some $k \geq 1$. {Suppose that $h L < 1$. Then there exists a unique DG approximate solution $x_h \in X_h^r$ to \eqref{eq-dode} of order $r \in \mathbb{N} \cup \{0\}$. Furthermore, we have}
\[
\sup_{0 \leq t \leq T}|x_h (t) - x(t)| \leq C h^{\min\{r+1, k\}}\|x\|_{W^{k, \infty} (I; \mathbb{R}^d)},
\]
where $C>0$ is determined by $L$, $T$, and $r$.
\end{thm}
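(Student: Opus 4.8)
The proof follows the classical analysis of discontinuous Galerkin time stepping (which is exactly the content of the cited \cite{SS00}), and I would organize it element by element. For the well-posedness part, note first that $h_n\le h$ for all $n$, so $hL<1$ forces $h_nL<1$ on each subinterval. On a single element $I_n$, treating the inflow value $x_h(t_{n-1}^-)$ as prescribed data (with the convention $x_h(t_0^-):=x_0$), the identity \eqref{eq-dode} localizes to the problem of finding $w\in P^r(I_n)$ such that
\[
(w',\varphi)_{I_n}+(w^+_{n-1},\varphi^+_{n-1})=(F(t,w),\varphi)_{I_n}+(x_h(t_{n-1}^-),\varphi^+_{n-1})\qquad\text{for all }\varphi\in P^r(I_n).
\]
The linear map $w\mapsto\bigl[\varphi\mapsto(w',\varphi)_{I_n}+(w^+_{n-1},\varphi^+_{n-1})\bigr]$ on $P^r(I_n)$ is nonsingular (the standard one-element DG operator; for $r=0$ it is the identity), so this is equivalent to a fixed-point equation $w=x_h(t_{n-1}^-)+\mathcal S_nF(\cdot,w)$ with a bounded solution operator obeying $\|\mathcal S_n\|_{L^\infty(I_n)\to L^\infty(I_n)}\le h_n$. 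Since $F$ is globally Lipschitz in the state with constant $L$, the right-hand side is a contraction on $L^\infty(I_n;\mathbb{R}^d)$ once $h_nL<1$, and its unique fixed point is $x_h|_{I_n}$; marching $n=1,\dots,N$ produces the unique $x_h\in X_h^r$ solving \eqref{eq-dode}.

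For the error estimate I would set $e:=x_h-x$. Since $x$ is continuous, has vanishing jumps, and satisfies $x'=F(t,x)$ pointwise, it satisfies the same variational identity, and subtracting gives, for all $\varphi\in X_h^r$,
\[
B(e,\varphi)=\sum_{n=1}^N\bigl(F(t,x_h)-F(t,x),\varphi\bigr)_{I_n},\qquad B(w,\varphi):=\sum_{n=1}^N(w',\varphi)_{I_n}+\sum_{n=2}^N([w]_{n-1},\varphi^+_{n-1})+(w^+_0,\varphi^+_0).
\]
I would then write $e=\theta+\rho$ with $\rho:=\pi^- x-x$ and $\theta:=x_h-\pi^- x\in X_h^r$, where $\pi^-$ is the elementwise projection onto $P^r(I_n)$ that matches $x$ at the right node $t_n^-$ and is $L^2(I_n)$-orthogonal to $P^{r-1}(I_n)$ (the Gauss--Radau projection). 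Integrating $B$ by parts on each element yields $B(w,\varphi)=-\sum_n(w,\varphi')_{I_n}-\sum_{n=1}^{N-1}(w_n^-,[\varphi]_n)+(w_N^-,\varphi_N^-)$, and since $\rho$ vanishes at every right node and is $L^2(I_n)$-orthogonal to $P^{r-1}(I_n)$, this gives $B(\rho,\varphi)=0$; hence $B(\theta,\varphi)=\sum_n(F(t,x_h)-F(t,x),\varphi)_{I_n}$. Localizing to $I_n$ shows that $\theta|_{I_n}$ solves the same one-element problem as above, with source $F(t,x_h)-F(t,x)$ and inflow value $\theta(t_{n-1}^-)$ (convention $\theta(t_0^-):=0$), so that
\[
\|\theta\|_{L^\infty(I_n)}\le|\theta(t_{n-1}^-)|+h_n\|F(t,x_h)-F(t,x)\|_{L^\infty(I_n)}\le|\theta(t_{n-1}^-)|+h_nL\bigl(\|\theta\|_{L^\infty(I_n)}+\|\rho\|_{L^\infty(I_n)}\bigr).
\]

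Because $h_nL<1$, the term $h_nL\|\theta\|_{L^\infty(I_n)}$ is absorbed into the left-hand side, giving $\|\theta\|_{L^\infty(I_n)}\le(1-h_nL)^{-1}\bigl(|\theta(t_{n-1}^-)|+h_nL\|\rho\|_{L^\infty(I_n)}\bigr)$; summing over $n$ and applying a discrete Gronwall inequality then yields $\sup_{[0,T]}|\theta|\le C\max_{1\le n\le N}\|\rho\|_{L^\infty(I_n)}$ with $C=C(L,T,r)$. Standard polynomial approximation theory bounds the projection error, $\|\rho\|_{L^\infty(I_n)}=\|\pi^- x-x\|_{L^\infty(I_n)}\le Ch_n^{\min\{r+1,k\}}\|x\|_{W^{k,\infty}(I;\mathbb{R}^d)}$, and the triangle inequality $\sup_{[0,T]}|x_h-x|\le\sup_{[0,T]}|\theta|+\sup_{[0,T]}|\rho|$ finishes the proof. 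I expect the main obstacle to be the accumulation step: a crude per-interval estimate produces a factor $(1-h_nL)^{-1}$, and to obtain a constant $C=C(L,T,r)$ that is independent of $h$ and $N$ one needs the sharper, energy-based treatment of the one-element DG problem from \cite{SS00}, for which the choice of $\pi^-$ — so that its contribution to $B$ vanishes identically — is essential. The remaining jump-term manipulations and the approximation-theory estimate for $\rho$ are routine.
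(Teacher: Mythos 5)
The paper does not actually prove Theorem \ref{thm-21} --- it imports it verbatim from \cite{SS00} (Corollary 3.15 and Theorem 2.6 there) --- so there is no in-paper argument to compare against; your sketch is precisely the standard proof underlying that citation (element-by-element fixed point for existence, Gauss--Radau splitting $e=\theta+\rho$ with $B(\rho,\varphi)=0$, local stability plus discrete Gronwall) and is correct in outline. The only caveats, which you essentially flag yourself, are that for $r\ge 1$ the local source-to-solution operator bound carries a constant $C_r>1$, so the contraction and the absorption of $h_nL\|\theta\|_{L^\infty(I_n)}$ really require $C_r hL<1$ rather than literally $hL<1$, and obtaining a constant independent of $h$ and $N$ in the accumulation step requires the nodal energy identity (testing with $\theta$ to control $|\theta_n^-|$) rather than the crude $(1-h_nL)^{-1}$ iteration of $L^\infty$ bounds.
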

Now, for given $u \in \mathcal{U}$, we consider the approximate solution $x \in X^r_h$ of the control problem \eqref{main_eq2} satisfying
 \begin{equation}\label{eq-dg}
\sum_{n=1}^N \lt(x'(t) - f(t,x(t),u(t)), \varphi(t)\rt)_{I_n} + \sum_{n=2}^N ([x]_{n-1}, \varphi^+_{n-1}) + (x^+_0, \varphi^+_0) = (x_0, \varphi^+_0)
\end{equation}
for all $\varphi \in X^r_h$.

Throughout the paper, we will consider local solutions $\bar{u}$ to \eqref{main_eq3} satisfying the following non-degeneracy condition.

\noindent \textbf{Assumption 1.} Let $\bar{u} \in \U_{ad}$ be the local solution of \eqref{main_eq1}. We assume that it satisfies 
\begin{equation}\label{ass_1}
j'' (\bar{u})(v, v) \geq \gamma \|v\|_{L^2(I)}^2 \quad  \forall \,  v \in \U
\end{equation}
for some $\gamma>0$.

 In addition, we assume that $\bar{u} \in \mathcal{U}_{ad}$ has bounded total variation, i.e., $V(\bar{u}) \leq R/2$ for a fixed value $R >0$. Here the total variation $V (f)$ for $f \in L^{\infty}(0,T)$ is defined as
\[
V(f) := \sup_{P} \sum_{j=0}^{n}|f(x_j) - f(x_{j+1})|,
\]
where $P$ is any partition $P = \{ 0=x_0 < x_1 <x_2 < \cdots < x_n < x_{n+1}=T\}$. 

Considering a discrete control-to-state mapping $G_h : \U \to X^r_h$, $u \mapsto G_h(u)$, where $G_h(u)$ is the solution of \eqref{eq-dg}, we introduce the discrete cost functional $j_h : \U \to \R_+, u \mapsto J(u, G_h(u))$. We consider the following discretized version of \eqref{main_eq1}:
\bq\label{main_eq4}
\min_{u \in \U_{ad}\cap \mathcal{V}_R} j_h(u), 
\eq
where
\[
\mathcal{V}_{R} = \{ u \in \mathcal{U}~:~ V(u) \leq R\}.
\]
We now define the local solution to \eqref{main_eq4} as follows.
\begin{defn} A control $\bar u_h \in \U_{ad} \cap \mathcal{V}_R$ is called a local solution of \eqref{main_eq4}  if there exists an $\delta>0$ such that $j_h(u) \geq j_h(\bar u_h)$ holds for all $u \in \U_{ad}\cap \mathcal{V}_R$ with $\|u - \bar u_h\|_{L^2 (I)} \leq \delta$.
\end{defn}

In the first main result, we prove the existence of the local solution to the approximate problem \eqref{main_eq4}.
\begin{thm}\label{thm-1}
Let $\bar{u} \in \U_{ad}\cap \mathcal{V}_{R/2}$ be a local solution of \eqref{main_eq1} satisfying \mbox{Assumption 1}. Then, there are constants $\epsilon >0$ and $h_0 >0$ such that for $h \in (0, h_0)$ the approximate problem  \eqref{main_eq4} has a local solution $\bar{u}_h \in \mathcal{U}_{ad} \cap \mathcal{V}_R$ satisfying $\|\bar{u}_h - \bar{u}\|_{L^2 (I)} < \ep$.  
\end{thm}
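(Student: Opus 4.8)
The plan is to obtain $\bar u_h$ as a minimizer of $j_h$ over a localized, $L^2$-compact competitor set, and then to show a priori that this minimizer sits strictly inside the localization, which promotes it to a genuine local solution of \eqref{main_eq4}. The order of quantifiers matters: first I fix $\ep$, and only afterwards $h_0$. Concretely, let $\ep_0>0$ be the radius from Definition \ref{def_ls} (so $j(\bar u)\le j(u)$ whenever $u\in\U_{ad}$, $\|u-\bar u\|_{L^2(I)}\le\ep_0$), and let $\ep_1>0$, $h_1>0$ be the parameters from the second-order analysis of Section \ref{sec-4} that realize the preservation of \eqref{ass_1}, i.e. $j_h''(w)(v,v)\ge\tfrac{\gamma}{2}\|v\|_{L^2(I)}^2$ for all $v\in\U$, all $h\le h_1$, and all $w\in\U$ with $\|w-\bar u\|_{L^2(I)}\le\ep_1$. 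Set $\ep:=\min\{\ep_0,\ep_1\}$, fixed once and for all, and put $K_\ep:=\U_{ad}\cap\mathcal{V}_R\cap\{u\in\U:\|u-\bar u\|_{L^2(I)}\le\ep\}$.

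The first step is to check that $K_\ep$ is a nonempty, convex, $L^2(I)$-compact set and that $j_h$ attains its infimum over it. It is nonempty because it contains $\bar u$ (as $V(\bar u)\le R/2\le R$), and convex because the box constraint, the total-variation seminorm, and the ball are all convex. For compactness, any sequence in $\U_{ad}\cap\mathcal{V}_R$ has, by Helly's selection theorem, a pointwise-convergent subsequence whose limit still lies in $\U_{ad}\cap\mathcal{V}_R$ (using lower semicontinuity of $V$), and uniform boundedness upgrades pointwise to $L^2(I)$ convergence by dominated convergence; intersecting with the $L^2$-closed ball preserves compactness. Next I would record that, for $h$ small enough that $G_h$ is well defined (say $hM<1$), the map $G_h$ is continuous on $\U_{ad}$ with respect to $L^2$-convergence — this follows from the discrete Gronwall-type estimate of Appendix \ref{app_c} applied to \eqref{eq-dg} together with dominated convergence — hence so is $j_h$, and therefore $j_h$ attains a minimum over $K_\ep$ at some $\bar u_h\in K_\ep$. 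Since $K_\ep$ is convex, $\bar u_h$ obeys the variational inequality $j_h'(\bar u_h)(u-\bar u_h)\ge0$ for all $u\in K_\ep$, in particular for $u=\bar u$.

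The second step is the a priori bound $\|\bar u_h-\bar u\|_{L^2(I)}^2\le 8\rho(h)/\gamma$, where $\rho(h):=\sup_{u\in K_\ep}|j_h(u)-j(u)|$. Applying Theorem \ref{thm-21} to \eqref{eq-dg} with $F(t,\cdot)=f(t,\cdot,u(t))$ and $k=1$ (the solution $G(u)$ lies in $W^{1,\infty}(I;\R^d)$ with a norm bounded uniformly over $u\in\U_{ad}$, thanks to \eqref{eq-1-10}) gives $\sup_t|G_h(u)(t)-G(u)(t)|\le Ch$ with $C$ independent of $u$; combined with the Lipschitz bound on $g$ this yields $\rho(h)\le Ch\to0$. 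Since $\bar u,\bar u_h\in K_\ep$ and $\ep\le\ep_0$, we have both $j_h(\bar u_h)\le j_h(\bar u)$ and $j(\bar u)\le j(\bar u_h)$, so
\[
j_h(\bar u)-j_h(\bar u_h)\le |j_h(\bar u)-j(\bar u)|+\bigl(j(\bar u)-j(\bar u_h)\bigr)+|j(\bar u_h)-j_h(\bar u_h)|\le 2\rho(h).
\]
On the other hand, Taylor expanding $j_h$ (which is $C^2$ by Appendix \ref{app_e}) about $\bar u_h$ produces some $\xi$ on the segment $[\bar u_h,\bar u]\subset K_\ep$ with
\[
j_h(\bar u)-j_h(\bar u_h)=j_h'(\bar u_h)(\bar u-\bar u_h)+\tfrac12 j_h''(\xi)(\bar u-\bar u_h,\bar u-\bar u_h)\ge \tfrac{\gamma}{4}\|\bar u-\bar u_h\|_{L^2(I)}^2,
\]
where I used $j_h'(\bar u_h)(\bar u-\bar u_h)\ge0$ and, since $\|\xi-\bar u\|_{L^2(I)}\le\ep\le\ep_1$ and $h\le h_1$, the coercivity from Section \ref{sec-4}. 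Comparing the two displays gives the claimed bound.

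For the last step I would choose $h_0\le\min\{h_1,1/M\}$ so small that $8\rho(h)/\gamma<\ep^2$ for all $h\in(0,h_0)$, which is possible because $\rho(h)\to0$ and $\ep$ is already fixed. Then $\|\bar u_h-\bar u\|_{L^2(I)}<\ep$, so $\bar u_h$ is strictly interior to the ball defining $K_\ep$; setting $\delta:=\ep-\|\bar u_h-\bar u\|_{L^2(I)}>0$, any $u\in\U_{ad}\cap\mathcal{V}_R$ with $\|u-\bar u_h\|_{L^2(I)}\le\delta$ satisfies $\|u-\bar u\|_{L^2(I)}\le\ep$, hence $u\in K_\ep$ and $j_h(u)\ge j_h(\bar u_h)$, so $\bar u_h$ is a local solution of \eqref{main_eq4}. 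I expect the genuinely substantial input to be the uniform-in-$h$ local coercivity of $j_h''$ near $\bar u$, which I am taking from Section \ref{sec-4}; inside the present argument the delicate points are keeping the quantifiers in the right order (deduce $\ep$ from $\ep_0,\ep_1$ before picking $h_0$) and verifying that the mean-value point $\xi$ stays inside the coercivity neighborhood, which is guaranteed here by convexity of the $L^2$-ball.
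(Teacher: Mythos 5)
Your proposal is correct. It shares the paper's overall skeleton --- minimize $j_h$ over the compact set $\U_{ad}\cap\mathcal{V}_R$ intersected with an $L^2$-ball about $\bar u$, then show the minimizer sits strictly inside the ball --- but the mechanism you use for the interiority step is genuinely different. The paper works on the ball of radius $2\ep$ and excludes the annulus $\ep\le\|u-\bar u\|_{L^2(I)}\le 2\ep$ by transferring the quadratic growth $j(u)\ge j(\bar u)+\tfrac{\gamma}{2}\ep^2$ of Theorem \ref{thm-31} to $j_h$ through the uniform bound $|j(u)-j_h(u)|\le Ch$; this is a pure comparison of function values and never invokes the optimality system of $\bar u_h$. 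You instead work on the ball of radius $\ep$, extract the variational inequality $j_h'(\bar u_h)(\bar u-\bar u_h)\ge 0$, Taylor-expand $j_h$ using the discrete coercivity of Lemma \ref{lem-32}, and combine this with the plain local optimality $j(\bar u)\le j(\bar u_h)$ to obtain the quantitative bound $\|\bar u_h-\bar u\|_{L^2(I)}^2\lesssim \rho(h)/\gamma=O(h)$. This buys an $O(\sqrt h)$ convergence rate for free and anticipates the structure of the proof of Theorem \ref{thm-2}, at the price of needing the differentiability of $j_h$ and Lemma \ref{lem-32} already at this stage, whereas the paper's value-comparison argument needs only Theorem \ref{thm-31} and continuity. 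Your existence step is also cleaner: you obtain continuity of $j_h$ from the Lipschitz estimate of Lemma \ref{lem-3-5}, while the paper passes to the limit in the DG equation using a.e.\ convergence of the controls and the finite dimensionality of $X_h^r$. Two cosmetic points: Lemma \ref{lem-32} delivers the constant $\gamma/4$ rather than the $\gamma/2$ you quote, so your a priori bound should read $16\rho(h)/\gamma$ instead of $8\rho(h)/\gamma$ (harmless, since $\rho(h)\to 0$ and $h_0$ is chosen afterwards); and in the Taylor step you should note explicitly that the intermediate point $\xi$ lies in $\U_{ad}$ (it does, by convexity of the box constraints), since Lemma \ref{lem-32} is stated only for controls in $\U_{ad}$.
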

The second main result is the following convergence estimate of the approximate solutions. 
\begin{thm}\label{thm-2}Let $\bar{u} \in \U_{ad}\cap \mathcal{V}_{R/2}$ be a local solution of \eqref{main_eq3} satisfying \mbox{Assumption~1}, let  $\bar{u}_h$ be the approximate solution found in Theorem \ref{thm-1}, and let $\lambda (\bar{u})$ be the adjoint state defined in Definition \ref{def-3-1} below. Assume that the state $\bar{x}=G(\bar{u})$ belongs to $W^{k_1, \infty}(I; \mathbb{R}^d)$ and the adjoint state $\lambda (\bar{u})$ belongs to $W^{k_2, \infty}(I; \mathbb{R}^d)$ for some $k_1, k_2 \geq 1$. Then we have
\begin{equation*}
\|\bar{u}- \bar{u}_h\|_{L^2(I)} = O(h^{\min\{r+1,k_1, k_2\}}).
\end{equation*}
\end{thm}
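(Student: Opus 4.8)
\textit{Proof proposal.} The plan is to combine the first-order optimality conditions for the continuous and discrete problems with the uniform coercivity of the second derivatives established in Section~\ref{sec-4}, thereby reducing the error $\|\bar u - \bar u_h\|_{L^2(I)}$ to a consistency estimate for the first derivatives, which is in turn controlled by the DG errors for the state and the adjoint state. First I would record the variational inequalities. Since $\U_{ad}$ is convex, the local solution $\bar u$ of \eqref{main_eq3} satisfies $j'(\bar u)(u-\bar u)\ge 0$ for all $u\in\U_{ad}$; since $\U_{ad}\cap\mathcal{V}_R$ is convex and $\bar u_h$ is a local solution of \eqref{main_eq4}, a segment argument gives $j_h'(\bar u_h)(u-\bar u_h)\ge 0$ for all $u\in\U_{ad}\cap\mathcal{V}_R$. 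Because $\bar u_h\in\U_{ad}$ and $\bar u\in\U_{ad}\cap\mathcal{V}_{R/2}\subset\U_{ad}\cap\mathcal{V}_R$, I may test the first inequality with $u=\bar u_h$ and the second with $u=\bar u$; adding them yields
\[
\bigl(j_h'(\bar u_h)-j'(\bar u)\bigr)(\bar u-\bar u_h)\ \ge\ 0.
\]

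Next I would split $j_h'(\bar u_h)-j'(\bar u)=\bigl(j_h'(\bar u_h)-j_h'(\bar u)\bigr)+\bigl(j_h'(\bar u)-j'(\bar u)\bigr)$. For the first summand, the mean value theorem applied to $j_h'$ along the segment $[\bar u,\bar u_h]$ gives
\[
\bigl(j_h'(\bar u_h)-j_h'(\bar u)\bigr)(\bar u-\bar u_h)\ =\ -\int_0^1 j_h''\bigl(\bar u+s(\bar u_h-\bar u)\bigr)(\bar u_h-\bar u,\ \bar u_h-\bar u)\,ds.
\]
By Theorem~\ref{thm-1} we have $\|\bar u_h-\bar u\|_{L^2(I)}<\epsilon$, so every point of the segment lies in the $L^2$-ball of radius $\epsilon$ about $\bar u$; shrinking $\epsilon$ and $h_0$ if necessary, the second-order analysis of Section~\ref{sec-4} (preservation of the coercivity \eqref{ass_1} for $j_h$ near $\bar u$) gives $j_h''(u)(v,v)\ge\tfrac{\gamma}{2}\|v\|_{L^2(I)}^2$ for all such $u$ and all $v\in\U$, whence the first summand is $\le-\tfrac{\gamma}{2}\|\bar u-\bar u_h\|_{L^2(I)}^2$. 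Combining with the displayed inequality above and Cauchy--Schwarz, and identifying $j'(\bar u)$ and $j_h'(\bar u)$ with their $L^2$-Riesz representatives (legitimate since, by Lemma~\ref{lem-30} and Lemma~\ref{lem-35}, they are represented by $L^\infty(I)$ functions), I obtain
\[
\tfrac{\gamma}{2}\|\bar u-\bar u_h\|_{L^2(I)}^2\ \le\ \bigl(j_h'(\bar u)-j'(\bar u)\bigr)(\bar u-\bar u_h)\ \le\ \|j_h'(\bar u)-j'(\bar u)\|_{L^2(I)}\,\|\bar u-\bar u_h\|_{L^2(I)},
\]
so that $\|\bar u-\bar u_h\|_{L^2(I)}\le\tfrac{2}{\gamma}\|j_h'(\bar u)-j'(\bar u)\|_{L^2(I)}$.

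It then remains to estimate $\|j_h'(\bar u)-j'(\bar u)\|_{L^2(I)}$. Using the adjoint representations of Lemma~\ref{lem-30} and Lemma~\ref{lem-35}, this difference is pointwise in $t$ a combination of $g_u$ and $f_u$ evaluated at $(\bar x,\bar u)$ respectively $(G_h(\bar u),\bar u)$, paired with the continuous adjoint $\lambda(\bar u)$ of Definition~\ref{def-3-1} respectively the discrete adjoint $\lambda_h(\bar u)$. Since $f_u,g_u$ are bounded and Lipschitz in $x$ by \eqref{eq-1-10}, and $\lambda_h(\bar u)$ is uniformly bounded by the stability of the discrete adjoint scheme (Section~\ref{sec-3}), it follows that
\[
\|j_h'(\bar u)-j'(\bar u)\|_{L^2(I)}\ \le\ C\bigl(\|G_h(\bar u)-\bar x\|_{L^\infty(I)}+\|\lambda_h(\bar u)-\lambda(\bar u)\|_{L^\infty(I)}\bigr).
\]
By Theorem~\ref{thm-21} and $\bar x\in W^{k_1,\infty}(I;\R^d)$, the first term is $O(h^{\min\{r+1,k_1\}})$. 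For the adjoint term I would compare $\lambda_h(\bar u)$ with the DG approximation of the exact adjoint ODE: the coefficients of the two schemes differ by $O(\|G_h(\bar u)-\bar x\|_{L^\infty(I)})=O(h^{\min\{r+1,k_1\}})$, so a discrete Gronwall estimate (Appendix~\ref{app_c}) bounds this mismatch, while the remaining part is $O(h^{\min\{r+1,k_2\}})$ by the (dual) DG error estimate of Section~\ref{sec-3} together with $\lambda(\bar u)\in W^{k_2,\infty}(I;\R^d)$. Hence $\|j_h'(\bar u)-j'(\bar u)\|_{L^2(I)}=O(h^{\min\{r+1,k_1,k_2\}})$, and the conclusion follows.

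The main obstacle inside this proof is the adjoint-state error bound $\|\lambda_h(\bar u)-\lambda(\bar u)\|_{L^\infty(I)}$: unlike the state equation, the discrete adjoint solves a linearized DG system whose coefficients are themselves only approximations (evaluated at $G_h(\bar u)$ rather than at $\bar x$), so one must carefully combine the DG convergence for the state, a stability/Gronwall bound for the perturbed adjoint scheme, and the DG error estimate for the exact adjoint ODE. Everything else---the optimality conditions, the mean value theorem step, and the coercivity argument---is routine once Theorem~\ref{thm-21}, the coercivity preservation of Section~\ref{sec-4}, and the existence result Theorem~\ref{thm-1} are available.
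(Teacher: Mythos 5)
Your proposal is correct and follows essentially the same route as the paper: sum the two variational inequalities, split $j_h'(\bar u_h)-j'(\bar u)$, apply the mean value theorem together with the uniform coercivity of $j_h''$ from Section~\ref{sec-4}, and reduce everything to the consistency term $(j_h'(\bar u)-j'(\bar u))(\bar u-\bar u_h)$, which is bounded via the adjoint representations of Lemma~\ref{lem-30} and Lemma~\ref{lem-35} by the DG errors for the state and the adjoint state; your sketch of the adjoint error bound (comparison with the DG discretization of the exact adjoint ODE plus a discrete Gronwall argument for the perturbed coefficients) is precisely the content of Lemma~\ref{lem_lamku}. The only cosmetic differences are that you use the integral form of the mean value theorem and phrase the consistency bound through the $L^2$-Riesz representative rather than estimating the pairing directly.
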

The above result establishes  the error estimate concerning the discretization of the ODEs in the OCPs. On the other hand, to implement a numerical computation to the OCP \eqref{main_eq3}, one need also consider an approximation of the control space with a finite dimensional space. In Section \ref{sec-full}, we will see that the proof of Theorem \ref{thm-2} can be extended to obtain the error analysis incorporating the discretization of the control space.

\section{Adjoint states}\label{sec-3}
This section is devoted to study the adjoint states to the OCP \eqref{main_eq1} and its discretized version \eqref{main_eq4}.

We introduce a bilinear form $b(\cdot,\cdot)$ for $x \in {W^{1,\infty} (0,T)}$ and  $\varphi \in X$ by
\begin{equation}\label{eq-3-10}
b(x,\varphi) := \int_0^T x'(t) \cdot \varphi(t)\,dt.
\end{equation}
Then, for a fixed control $u \in \U$ and initial data $x_0 \in \mathbb{R}^{d}$, a weak formulation of \eqref{main_eq2} can be written as 
\begin{equation}\label{eq-3-10}
b(x,\varphi) = \int_0^T f(t, x(t),u(t))\cdot\varphi(t)\,dt
\end{equation}
for all $\varphi \in X$ with $x(0) = x_0$. 

\begin{defn}\label{def-3-1} For a control $u \in \U$, we define the adjoint state $\lambda = \lambda(u) \in W^{1,\infty}(0,T)$ as the solution to 
\begin{equation}\label{eq-1-51}
\lambda' (t) = - \partial_x f (t,x(t), u(t)) \lambda (t) + \partial_x g (t,x(t), u(t))
\end{equation}
{with $\lambda(T) =0$}. It satisfies the weak formulation
\begin{equation}\label{eq-1-4}
b({\varphi}, \lambda) = ({\varphi}, \partial_x f (\cdot, x, u) \lambda - \partial_x g(\cdot, x, u))_{L^2(I)}
\end{equation} 
for all $\varphi \in X$ with $\lambda (T)=0$. 
\end{defn}
For $u, v \in \mathcal{U}$, the derivative of $j$ at $u$ in the direction $v$ is defined by 
\begin{equation*}
j' (u) v := \lim_{t \rightarrow 0^{+}} \frac{j(u+tv) -j (u)}{t}.
\end{equation*}
It is well-known that the derivative of the cost functional can be calculated with the adjoint state, as described below. 
\begin{lem}\label{lem-30}
We have
\begin{equation}\label{eq-1-2}
j'(u)(v) = \lt(\pa_u g(\cdot, x,  u) - \pa_u f(\cdot, x,  u)\lambda( u), v\rt)_I
\end{equation}
for all $v \in \U$, where $x = G(u)$.
\end{lem}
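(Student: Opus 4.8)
The plan is to differentiate the composite functional $j(u) = J(u, G(u))$ by the chain rule and then eliminate the term involving the derivative of the control-to-state map $G$ by introducing the adjoint state. First I would recall from Appendix \ref{app_d} that $G$ is differentiable and that, writing $x = G(u)$ and $z = G'(u)v$, the linearized state $z$ solves the variational equation
\begin{equation*}
z'(t) = \pa_x f(t,x(t),u(t)) z(t) + \pa_u f(t,x(t),u(t)) v(t), \qquad z(0) = 0,
\end{equation*}
equivalently, in the weak form, $b(z,\varphi) = (\pa_x f(\cdot,x,u) z + \pa_u f(\cdot,x,u) v, \varphi)_I$ for all $\varphi \in X$. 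Differentiating $j(u) = \int_0^T g(t,x(t),u(t))\,dt$ directly gives
\begin{equation*}
j'(u)(v) = \lt(\pa_x g(\cdot,x,u), z\rt)_I + \lt(\pa_u g(\cdot,x,u), v\rt)_I.
\end{equation*}

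The key step is to rewrite the first term $(\pa_x g(\cdot,x,u), z)_I$ using the adjoint equation. Taking $\varphi = z$ in the weak formulation \eqref{eq-1-4} of the adjoint state (legitimate since $z \in X$, and indeed $z \in W^{1,\infty}$), we get
\begin{equation*}
b(z,\lambda) = \lt(z, \pa_x f(\cdot,x,u)\lambda - \pa_x g(\cdot,x,u)\rt)_I.
\end{equation*}
On the other hand, taking $\varphi = \lambda$ in the weak form of the linearized state equation gives $b(z,\lambda) = (\pa_x f(\cdot,x,u) z + \pa_u f(\cdot,x,u) v, \lambda)_I$. Here I should be slightly careful: $b(z,\varphi) = \int_0^T z'\cdot\varphi\,dt$ is defined with the derivative on the first slot, while \eqref{eq-1-4} uses $b(\varphi,\lambda) = \int_0^T \varphi'\cdot\lambda\,dt$; so to make the pairing consistent I would instead integrate by parts directly. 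Using $z(0)=0$ and $\lambda(T)=0$,
\begin{equation*}
\int_0^T z'(t)\cdot\lambda(t)\,dt = -\int_0^T z(t)\cdot\lambda'(t)\,dt,
\end{equation*}
and substituting $\lambda' = -\pa_x f(\cdot,x,u)\lambda + \pa_x g(\cdot,x,u)$ and $z' = \pa_x f(\cdot,x,u)z + \pa_u f(\cdot,x,u)v$ into the two sides yields, after the $\pa_x f$ terms cancel (using that $(\pa_x f\, z, \lambda) = (z, (\pa_x f)^\top \lambda)$ and matching with the adjoint equation read correctly with the transpose), the identity
\begin{equation*}
\lt(\pa_u f(\cdot,x,u) v, \lambda\rt)_I = \lt(\pa_x g(\cdot,x,u), z\rt)_I.
\end{equation*}
Substituting this into the expression for $j'(u)(v)$ gives
\begin{equation*}
j'(u)(v) = \lt(\pa_u g(\cdot,x,u), v\rt)_I - \lt(\pa_u f(\cdot,x,u)\lambda(u), v\rt)_I,
\end{equation*}
which is \eqref{eq-1-2}.

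**Main obstacle.** The routine parts are the chain rule and the integration by parts; the one point requiring genuine care is the bookkeeping of transposes/adjoints of the Jacobian matrices $\pa_x f$ and $\pa_x g$ and the precise convention in the bilinear form $b$, so that the $\pa_x f$ contributions cancel exactly rather than up to a transpose. I would resolve this by writing the adjoint equation \eqref{eq-1-51} componentwise (so that $\pa_x f$ appears as the matrix acting appropriately on $\lambda$) and verifying the cancellation at the level of $\int_0^T \lambda(t)^\top \pa_x f(t,x,u) z(t)\,dt$ appearing identically on both sides. The regularity needed to justify the integration by parts — $z, \lambda \in W^{1,\infty}(0,T)$ — follows from the Lipschitz/smoothness hypotheses \eqref{eq-1-10} on $f$ and $g$ together with the differentiability of $G$ established in Appendix \ref{app_d}, so no additional assumptions are required. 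The detailed computation is deferred to Appendix \ref{app_a}, as indicated in the introduction.
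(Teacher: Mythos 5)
Your proposal is correct and follows essentially the same route as the paper's proof in Appendix \ref{app_a}: differentiate $j(u+sv)$ at $s=0$ using the linearized state $y=G'(u)v$ from Lemma \ref{lem-d-1}, then convert the term $(\pa_x g,y)_I$ into $-(\pa_u f\,v,\lambda)_I$ via integration by parts, using $y(0)=0$, $\lambda(T)=0$, and the cancellation of the $\pa_x f$ terms. Your extra care with the transpose conventions is a reasonable refinement but does not change the argument.
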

\begin{proof} For the completeness of the paper, we give the proof in {Appendix \ref{app_a}.}
\end{proof}
Next we describe the adjoint problem for the approximate problem \eqref{main_eq4}. For ${x}, \varphi \in X^r_h$, we define 
\bq\label{def_B}
B(x, \varphi) := \sum_{n=1}^N (x', \varphi)_{I_n} + \sum_{n=2}^N ([x]_{n-1}, \varphi^+_{n-1}) + (x^+_0, \varphi^+_0).
\eq
For approximate solution $x_h = G_h (u) \in X^r_h$, the equation \eqref{eq-dg} with control $u \in \U$ can be written as
\bq\label{dg_b}
B(x_h,\varphi) = \lt(f(\cdot,x_h,u),\varphi \rt)_I + (x_0, \varphi^+_0) \quad \forall \, \varphi \in X^r_h.
\eq
Now we define the adjoint equation for the approximate problem \eqref{main_eq4}.
\begin{defn}The adjoint state $\lambda_h = \lambda_h(u) \in X^r_h$ is defined as the solution of the following discrete adjoint equation:
\begin{equation}\label{eq-2-1}
B(\varphi, \lambda_h) = (\varphi, \pa_x f(\cdot,x_h,u)\lambda_h - \pa_x g(\cdot,x_h, u))_I  \quad \forall \, \varphi \in X^r_h.
\end{equation}
\end{defn}
In Appendix \ref{app_a}, we briefly explain how the adjoint equation \eqref{eq-2-1} can be derived from the Lagrangian related to \eqref{main_eq4}. We also have an analogous result to Lemma \ref{lem-30}.
\begin{lem}\label{lem-35}
We have
\bq\label{eqn_jk0}
j'_h(u)(v) = (\pa_u g(\cdot, x_h, u) - \pa_u f(\cdot, x_h, u)\lambda_h, v)_I  \quad \forall \,v \in \U,
\eq
where $x_h = G_h(u)$.
\end{lem}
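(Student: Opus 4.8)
The plan is to follow the proof of Lemma~\ref{lem-30}, but in the discrete framework the argument is in fact shorter: the non-symmetric bilinear form $B$ will be evaluated at one and the same pair of arguments in both the linearized state equation and the adjoint equation, so that no integration by parts and no tracking of jump terms is required.

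First, recall that $j_h(u) = \int_0^T g(t,x_h(t),u(t))\,dt$ with $x_h = G_h(u)$. Using the chain rule together with the differentiability of $G_h$ established in Appendix~\ref{app_e}, and writing $z_h := G_h'(u)v \in X^r_h$, one obtains
\[
 j_h'(u)(v) = \big(\pa_x g(\cdot,x_h,u),\, z_h\big)_I + \big(\pa_u g(\cdot,x_h,u),\, v\big)_I,
\]
so it remains only to rewrite the first term through $\lambda_h$. Differentiating the discrete state relation \eqref{dg_b} with respect to $u$ in the direction $v$ — the datum $x_0$ being independent of $u$, so that its contribution disappears — shows (again, cf.\ Appendix~\ref{app_e}) that $z_h$ is the unique element of $X^r_h$ with
\[
 B(z_h,\varphi) = \big(\pa_x f(\cdot,x_h,u)\,z_h + \pa_u f(\cdot,x_h,u)\,v,\ \varphi\big)_I \qquad \forall\,\varphi \in X^r_h.
\]

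Next I would test this identity with $\varphi = \lambda_h$ and, simultaneously, test the discrete adjoint equation \eqref{eq-2-1} with $\varphi = z_h$. The left-hand sides of the two resulting identities both equal $B(z_h,\lambda_h)$, so subtracting them cancels this common term and leaves
\[
 \big(\pa_x f(\cdot,x_h,u)\,z_h + \pa_u f(\cdot,x_h,u)\,v,\ \lambda_h\big)_I = \big(z_h,\ \pa_x f(\cdot,x_h,u)\,\lambda_h - \pa_x g(\cdot,x_h,u)\big)_I .
\]
The two terms carrying $\pa_x f$ coincide by the transpose convention fixed for the Jacobian factors in \eqref{eq-1-51} and \eqref{eq-2-1} (exactly as in the continuous case), so cancelling them gives $\big(\pa_x g(\cdot,x_h,u),z_h\big)_I = -\big(\pa_u f(\cdot,x_h,u)\,\lambda_h,\,v\big)_I$. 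Substituting this into the formula for $j_h'(u)(v)$ above yields \eqref{eqn_jk0}.

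The computation itself is routine once these inputs are in place; the only substantive ingredient is the differentiability of $G_h$ together with the identification of $G_h'(u)v$ with the solution of the linearized discrete equation, which I would cite from Appendix~\ref{app_e} (it rests on the discrete Gronwall-type inequality of Appendix~\ref{app_c}) rather than redo here. The single point needing a little attention is the bookkeeping of transposes in $\pa_x f$ and $\pa_u f$, which must be kept consistent with the convention already adopted in the continuous setting; beyond that, the absence of any boundary or jump contributions — because $B$ is evaluated at the same pair $(z_h,\lambda_h)$ in both steps — makes this argument cleaner than the proof of Lemma~\ref{lem-30}.
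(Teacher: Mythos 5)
Your proposal is correct and follows essentially the same route as the paper: differentiate $j_h$ via the chain rule using the identification of $G_h'(u)v$ with the solution of the linearized discrete equation from Appendix~\ref{app_e}, then pair the linearized state equation tested with $\lambda_h$ against the adjoint equation \eqref{eq-2-1} tested with $G_h'(u)v$ so that the common term $B(G_h'(u)v,\lambda_h)$ cancels. The paper's write-up merely inserts an extra integration-by-parts detour through the jump-term form \eqref{dis_adj} of the adjoint equation before arriving at the same cancellation, so your shortcut of invoking \eqref{eq-2-1} directly is legitimate and loses nothing.
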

\begin{proof}
The proof is given in {Appendix \ref{app_a}.}
\end{proof}
In order to prove the main results in Section 2, we shall use the following lemma.
\begin{lem}\label{lem_lamku}Let $u \in \mathcal{U}$. Suppose that $x= G(u) \in W^{k_1,\infty} (I;\R^d)$ and $\lambda = \lambda (u) \in W^{k_2,\infty}(I;\R^d)$ for some $k_1, k_2 \geq 1$. Then we have
\begin{equation}\label{eq-3-3}
\| \lambda ({u}) - \lambda_h ({u})\|_{L^2(I)} = O(h^{\min \{k_1, k_2, r+1\}}).
\end{equation}
\end{lem}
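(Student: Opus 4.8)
The plan is to estimate $\lambda(u) - \lambda_h(u)$ by comparing both against a suitable intermediate object: the DG approximation of the \emph{continuous} adjoint ODE \eqref{eq-1-51}. Denote by $\tilde\lambda_h \in X^r_h$ the DG approximate solution to the backward ODE satisfied by $\lambda = \lambda(u)$, namely the solution of the DG scheme \eqref{eq-dode} applied to $\lambda' = -\partial_x f(\cdot,x,u)\lambda + \partial_x g(\cdot,x,u)$ with terminal data $\lambda(T)=0$ (run backward in time, so the jump and initial terms in \eqref{def_B} are placed at $t=T$). Since $x = G(u) \in W^{k_1,\infty}$, the coefficient $\partial_x f(\cdot,x,u)$ lies in $W^{\min\{k_1,1\},\infty}$ and the source $\partial_x g(\cdot,x,u)$ lies in $W^{\min\{k_1,1\},\infty}$, so $\lambda \in W^{k_2,\infty}$ with $k_2 \ge 1$; the Lipschitz constant $L$ of the right-hand side is controlled by $M$ from \eqref{eq-1-10}. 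Applying Theorem \ref{thm-21} (in its time-reversed form) gives
\[
\sup_{0\le t\le T}|\tilde\lambda_h(t) - \lambda(t)| \le C h^{\min\{r+1,k_2\}}\|\lambda\|_{W^{k_2,\infty}(I)},
\]
which handles the first half of the triangle inequality $\|\lambda - \lambda_h\|_{L^2(I)} \le \|\lambda - \tilde\lambda_h\|_{L^2(I)} + \|\tilde\lambda_h - \lambda_h\|_{L^2(I)}$.

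The remaining task is to bound $\|\tilde\lambda_h - \lambda_h\|_{L^2(I)}$, the difference between two DG solutions on the same mesh, differing only in that $\tilde\lambda_h$ uses the exact state $x = G(u)$ in its coefficients while $\lambda_h$ uses the discrete state $x_h = G_h(u)$. Subtracting the two defining relations — \eqref{eq-2-1} for $\lambda_h$ and the analogous $B(\varphi,\tilde\lambda_h) = (\varphi, \partial_x f(\cdot,x,u)\tilde\lambda_h - \partial_x g(\cdot,x,u))_I$ for $\tilde\lambda_h$ — the quantity $e_h := \tilde\lambda_h - \lambda_h$ satisfies, for all $\varphi \in X^r_h$,
\begin{align*}
B(\varphi, e_h) = \bigl(\varphi, \partial_x f(\cdot,x,u)\tilde\lambda_h - \partial_x f(\cdot,x_h,u)\lambda_h\bigr)_I - \bigl(\varphi, \partial_x g(\cdot,x,u) - \partial_x g(\cdot,x_h,u)\bigr)_I.
\end{align*}
Writing the first bracket as $\partial_x f(\cdot,x_h,u)\,e_h + \bigl(\partial_x f(\cdot,x,u)-\partial_x f(\cdot,x_h,u)\bigr)\tilde\lambda_h$ and using the $W^{3,\infty}$ bound \eqref{eq-1-10} to get $|\partial_x f(\cdot,x,u)-\partial_x f(\cdot,x_h,u)| \le M|x - x_h|$ and similarly for $g$, one obtains a discrete backward equation for $e_h$ with zero terminal data, a zeroth-order coefficient bounded by $M$, and a source term bounded pointwise by $CM(|\tilde\lambda_h| + 1)|x - x_h| \le C\|x - x_h\|_{L^\infty(I)}$, since $\tilde\lambda_h$ is uniformly bounded by the previous step. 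Then I would invoke the discrete Gronwall-type inequality for the DG scheme (the one proved in Appendix \ref{app_c}, or directly the stability built into the argument of Theorem \ref{thm-21}) to conclude
\[
\|e_h\|_{L^2(I)} \le C\|x - x_h\|_{L^\infty(I)} \le C h^{\min\{r+1,k_1\}}\|x\|_{W^{k_1,\infty}(I)},
\]
where the last inequality is Theorem \ref{thm-21} applied to the state equation (with $F(t,\cdot) = f(t,\cdot,u(t))$, Lipschitz constant controlled by $M$, and $x = G(u) \in W^{k_1,\infty}$). Combining the two halves yields $\|\lambda(u) - \lambda_h(u)\|_{L^2(I)} = O(h^{\min\{k_1,k_2,r+1\}})$.

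The main obstacle is the middle estimate: controlling the stability of the discrete adjoint scheme for $e_h$ under a perturbation of its coefficients. Two technical points need care. First, the DG bilinear form $B$ is coercive/invertible only under the smallness condition $hL < 1$ (cf. Theorem \ref{thm-21}), so one must verify that the relevant Lipschitz constant here — coming from $\partial_x f$ and hence bounded by $M$ — satisfies $hM < 1$ for $h$ small, which is fine but should be stated. Second, one must argue that the source term's $L^2$ (or $L^\infty$) norm genuinely transfers through the DG stability estimate with a constant independent of $h$; this is exactly the content of the Gronwall-type lemma in Appendix \ref{app_c}, so the cleanest route is to quote that lemma rather than re-derive the stability by hand. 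A minor subtlety worth a sentence: Theorem \ref{thm-21} is stated for forward ODEs, so one should note that the adjoint equations run backward in time and the DG scheme for them is the time-reversed version (jumps and boundary term at $t=T$), to which the theorem applies verbatim after the change of variable $t \mapsto T - t$.
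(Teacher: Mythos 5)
Your proposal is correct and follows essentially the same route as the paper: the paper also introduces an auxiliary function $\zeta_h \in X_h^r$ (your $\tilde\lambda_h$), the DG discretization of the continuous adjoint equation run backward in time (Lemma \ref{lem-3-6}), bounds $\|\zeta_h - \lambda\|_{L^\infty(I)}$ by Theorem \ref{thm-21}, and bounds $\|\lambda_h - \zeta_h\|_{L^\infty(I)}$ by writing a perturbed discrete equation with source of size $O(\|x - x_h\|_{L^\infty(I)})$ and invoking the Gronwall-type stability Lemma \ref{lem-d-4}. The only cosmetic difference is that the paper keeps $\partial_x f(\cdot,x,u)$ as the zeroth-order coefficient in the error equation while you freeze $\partial_x f(\cdot,x_h,u)$; both choices push the difference into the remainder.
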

\begin{proof}
We recall from \eqref{eq-1-4} and \eqref{eq-2-1} that $\lambda =\lambda ({u})$ solves
\begin{equation}\label{eq-3-30}
b(\varphi, \lambda) - (\varphi, \partial_x f (\cdot, {x}, {u})\lambda)_{L^2(I)} =   -(\varphi,  \pa_x g(\cdot,  x,  u))_I,
\end{equation}
and $\lambda_h =\lambda_h ({u})$ solves
\begin{equation}\label{eq-3-31}
\begin{split}
&B(\varphi, \lambda_h) - (\varphi, \partial_x f (\cdot, {x}, {u}) \lambda_h)_{L^2(I)}
\cr
&\quad = - (\varphi, \partial_x g(\cdot, {x}_h, {u}))_{L^2(I)}  + (\varphi, ( \partial_x f (\cdot, {x}_h, {u}) - \partial_x f (\cdot, {x}, {u})) \lambda_h)_{L^2(I)}\quad \forall\,\varphi \in X_h^{r}. 
\end{split}
\end{equation}
Here $x \in G({u}) \in X$ and ${x}_h = G_h ({u}) \in X_h$. The estimate of ${x}-{x}_h$ is induced from Theorem \ref{thm-21} as follows:
\begin{equation}\label{eq-3-2}
\| {x}- {x}_h\|_{L^{\infty}(I)} = O(h^{\min\{k_1, r+1\}}) \|{x}\|_{W^{k_1, \infty}(I)}.
\end{equation}
As an auxiliary function, we consider $\zeta_h \in X_h$ solving 
\begin{equation}\label{eq-3-32}
B({\varphi}, \zeta_h ) - ({\varphi}, \partial_x f(\cdot, x,u) \zeta_h)_{I} = - ({\varphi}, \partial_x g(\cdot, x,u))_{I} \quad \forall\,\varphi \in X_h^r,
\end{equation}
which is the DG discretization of \eqref{eq-3-30} in a backward way (see Lemma \ref{lem-3-6} below). Then, by Theorem \ref{thm-21}, we have
\begin{equation}\label{eq-1}
\| \zeta_h - \lambda \|_{L^{\infty}(I)} = O (h^{\min\{k_2, r+1\}}) \|\lambda\|_{W^{k_2, \infty}(I)}.
\end{equation}
By \eqref{eq-3-2}, we obtain
\begin{equation*}
\partial_x g(\cdot, {x}, {u}) - \partial_x g(\cdot, {x}_h, {u}) = O(h^{\min\{k_1, r+1\}}) 
\end{equation*}
and
\begin{equation*}
 ( \partial_x f (\cdot, {x}_h, {u}) - \partial_x f (\cdot, {x}, {u})) \lambda_h ({u}) = O(h^{\min\{k_1, r+1\}}).
\end{equation*}
Combining these estimates with \eqref{eq-3-31} and \eqref{eq-3-32} we find
\begin{equation*}
B(\varphi, \lambda_h - \zeta_h) = (\varphi, \partial_x f (\cdot, x, u) (\lambda_h - \zeta_h))_{I} + (\varphi, R(t))_{I}\quad \forall\,\varphi \in X_h^{r},
\end{equation*}
where $R: I \rightarrow \mathbb{R}^d$ satisfies $\|R(t)\| = O(h^{\min\{k_1, r+1\}})$. 
This, together with Lemma \ref{lem-d-4}, yields
\begin{equation*}
\|\lambda_h - \zeta_h\|_{L^{\infty}(I)} = O(h^{\min\{k_1, r+1\}}).
\end{equation*}
Combining this estimate with \eqref{eq-1}, we find that
\begin{equation*}
\|\lambda_h  - \lambda \|_{L^{\infty}(I)} = O(h^{\min\{k_1,k_2, r+1\}}),
\end{equation*}
which completes the proof. 
\end{proof}
With abusing a notation for simplicity, let us define $J$ as the interval $I$ given a partition $0=s_0 < s_1 < \cdots < s_{N-1} < s_N =T$ with $s_j = t_{N-j}$. Also we  set $X_{h,J}^{r}$ as the DG space $X_h^r$ with the new partition. Then we have the following lemma.
\begin{lem}\label{lem-3-6} Assume that $\lambda \in X_h^r$ is a solution to 
\begin{equation*}
B(\phi, \lambda) = (\phi, F(t, \lambda) )_{I}\quad \forall~\phi \in X_h^r.
\end{equation*}
Then $W:I \rightarrow \mathbb{R}^d$ defined by $W(t) = \lambda (T-t)$ for $t \in I=[0,T]$ satisfies
\begin{equation*}
B(W, \psi) = (F(t,W), \psi)_{I}\quad \forall~\psi \in X_{h,J}^r.
\end{equation*}
\end{lem}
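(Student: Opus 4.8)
The plan is to show that the change of variables $t \mapsto T - t$ turns the forward DG bilinear form $B$ on the partition $\{t_n\}$ into the same bilinear form $B$ on the reversed partition $\{s_j\}$, up to swapping the roles of the two arguments. Concretely, I would introduce the substitution $\tau = T - t$ and, for any $\phi \in X_h^r$ (on the original partition), set $\tilde\phi(\tau) := \phi(T-\tau)$, so that $\tilde\phi \in X_{h,J}^r$; note that $W = \widetilde{\lambda}$ by definition, and that $\psi \mapsto \widetilde{\psi}$ is a linear bijection between $X_{h,J}^r$ and $X_h^r$. Thus it suffices to verify the identity
\begin{equation*}
B(\widetilde\lambda, \psi) = B\bigl(\phi, \widetilde\psi\,\bigr)\Big|_{\phi = \lambda}
\end{equation*}
after the substitution, and then feed in the hypothesis $B(\phi,\lambda) = (\phi, F(\cdot,\lambda))_I$ with $\phi = \widetilde\psi$, checking that the right-hand side $(\widetilde\psi, F(\cdot,\lambda))_I$ equals $(F(\cdot,W),\psi)_I$ under the same change of variables (here one uses that $F$ is applied pointwise, so $F(T-\tau, \lambda(T-\tau)) = F(T-\tau, W(\tau))$, and the $L^2$ inner product is invariant under $t \mapsto T-t$; one should be slightly careful that the statement's $F(t,W)$ is to be read as $F(t, W(t))$ — i.e. the first slot still carries the running variable — which is exactly what the substitution produces).

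The core computation is the termwise analysis of $B$. I would split $B(x,\varphi) = \sum_{n=1}^N (x',\varphi)_{I_n} + \sum_{n=2}^N ([x]_{n-1}, \varphi^+_{n-1}) + (x_0^+,\varphi_0^+)$ and treat the volume term and the jump/initial terms separately. For the volume term, under $\tau = T-t$ the derivative picks up a sign, $(\widetilde{x})'(\tau) = -x'(T-\tau)$, and $dt = -d\tau$ with a reversal of orientation, so the two sign changes cancel and $\sum_n(x',\varphi)_{I_n}$ is genuinely symmetric in a suitable sense; more precisely one shows $\sum_n (\widetilde\lambda', \psi)_{I_n} = \sum_n (\phi', \widetilde\psi)_{I_n}|_{\phi=\lambda}$ — wait, that is not symmetric, so the real point is to instead integrate by parts. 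This is the step I expect to be the genuine obstacle: the volume integral $\int x'\varphi$ is not symmetric, so one must integrate by parts on each $I_n$, producing $(x\varphi)|_{t_{n-1}}^{t_n} - \int x \varphi'$, and the boundary contributions from integration by parts must be combined with the jump terms and telescoped. The classical fact underlying the lemma is precisely that the forward DG form and the "backward" DG form are adjoint: $B(x,\varphi) = \widetilde{B}(\varphi, x)$ where $\widetilde B$ is the upwind form oriented in the opposite time direction. So the heart of the proof is the identity $B(x,\varphi) = -\int_0^T x\,\varphi' \,dt + \text{(boundary/jump terms symmetric under reversal)}$, which I would establish by a careful bookkeeping of the nodal values $x_n^\pm$, $\varphi_n^\pm$.

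In detail, I would carry out the steps in this order. First, integrate by parts termwise: $(x',\varphi)_{I_n} = (x_n^- \cdot \varphi_n^-) - (x_{n-1}^+ \cdot \varphi_{n-1}^+) - (x,\varphi')_{I_n}$. Second, sum over $n$ and regroup the nodal boundary terms at each interior node $t_n$ ($1 \le n \le N-1$) together with the jump term $([x]_n,\varphi_n^+) = (x_n^+ - x_n^-)\cdot\varphi_n^+$; after cancellation one is left at each interior node with a clean expression such as $-(x_n^-\cdot[\varphi]_n)$ or $(x_n^+ \cdot \varphi_n^+) - (x_n^- \cdot \varphi_n^-) + (x_n^+-x_n^-)\cdot \varphi_n^+$ — I would simplify this bookkeeping to the form that makes manifest the symmetry under $t \leftrightarrow T-t$ combined with $x \leftrightarrow \varphi$ and $+\leftrightarrow -$. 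Third, handle the two endpoints $t_0 = 0$ and $t_N = T$: the initial term $(x_0^+,\varphi_0^+)$ must end up matching what becomes the "initial" term at $\tau = 0$ (i.e. at $t = T$) for the reversed form, and the terminal node $t_N$ contributes $(x_N^- \cdot \varphi_N^-)$ which becomes the terminal contribution on the other side. Fourth, perform the change of variables $\tau = T-t$ in the resulting symmetric expression, using $x_n^\pm \leftrightarrow \widetilde{x}$ values at the corresponding reversed nodes $s_{N-n}$ with the one-sided limits swapped ($x(t_n + 0) = \widetilde x(s_{N-n} - 0)$ and vice versa). Fifth, apply the hypothesis and the invariance of the $L^2$ inner product to conclude. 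The main obstacle, as noted, is purely the combinatorial/nodal algebra of matching one-sided limits across the time reversal while keeping the upwind structure intact; everything else is routine. For brevity in the write-up I would state the key adjointness identity $B(x,\varphi) = B_J(\varphi,x)$ as a sub-step (with $B_J$ the form on the reversed partition), prove it by the integration-by-parts-plus-telescoping computation above, and then the lemma follows in one line.
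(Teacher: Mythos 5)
Your proposal is correct and follows essentially the same route as the paper's proof: integrate by parts on each subinterval, absorb the resulting nodal boundary terms into the jump terms to obtain the adjoint ("backward") form $-\sum_n(\phi,\lambda')_{I_n}-\sum_{n=1}^{N-1}(\phi_n^-,[\lambda]_n)+(\phi_N^-,\lambda_N^-)$, and then perform the reversal $t\mapsto T-t$, under which $\phi_n^-$ becomes $\psi_{N-n}^+$, $[\lambda]_n$ becomes $-[W]_{N-n}$, and the terminal term becomes the initial term of $B$ on the reversed partition. The adjointness identity you isolate as the key sub-step is exactly the paper's intermediate identity, so no further comment is needed.
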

\begin{proof}
By an integraion by parts, we have
\begin{equation*}
\begin{split}
B(\phi, \lambda) & = \sum_{n=1}^{N} (\phi', \lambda)_{I_n} + \sum_{n=2}^{N} ([\phi]_{n-1},\lambda_{n-1}^{+}) + (\phi_0^{+}, \lambda_0^{+})
\\
& = -\sum_{n=1}^{N} (\phi, \lambda' )_{I_n} -\sum_{n=1}^{N-1} (\phi_{n}^{-}, [\lambda]_{n}) + (\phi_{N}^{-}, \lambda_N^{-}),
\end{split}
\end{equation*}
which leads to 
\begin{equation}\label{eq-t-2}
 -\sum_{n=1}^{N} (\phi, \lambda' )_{I_n} -\sum_{n=1}^{N-1} (\phi_{n}^{-}, [\lambda]_{n}) + (\phi_{N}^{-}, \lambda_N^{-})= (\phi, F(t,\lambda))_{I}\quad \forall~\phi \in X_h^r.
 \end{equation}
We now observe that $W(t) = \lambda (T-t)$ satisfies $W' (t) = - \lambda' (T-t)$ and $[W]_{N-n} = - [\lambda]_{n}$. We also set $\psi (t) = \phi (T-t)$. Then $\psi \in X_{h,J}^{r}$ and we have $\phi_{n}^{-} = \psi_{N-n}^{+}$. Considering $J_n :=(s_{n-1}, s_n)$, it holds that $J_n = I_{N+1-n}$ for $1 \leq n \leq N$. Using these notations, we write \eqref{eq-t-2} as
\begin{equation*}
\sum_{n=1}^{N} (\psi, W')_{J_{N+1-n}} +\sum_{n=1}^{N-1} (\psi_{N-n}^{+}, [W]_{N-n}) + (\psi_{0}^{+}, W_0^{+})= (\psi, F(t, W))_{I}\quad \forall~\psi \in X_{h,J}^{r}.
\end{equation*}
Rearranging this, we get
\begin{equation*}
\sum_{n=1}^{N} (W', \psi)_{J_{n}} +\sum_{n=1}^{N-1} ([W]_{n},\psi_{n}^{+}) + (W_0^{+}, \psi_0^{+}) = (F(t, W),\psi)_{I}\quad \forall~\psi \in X_{h,J}^{r},
\end{equation*}
which is the desired equation $B(W, \psi) = (F(t,W), \psi)_{I}$. The proof is finished.
\end{proof}
%%%%%%%%%%%%%%%%%%%%%%%%%%%%%%%%%%%%%%%%%%%%%%%%%%%%%%%%%%%%%%%%%%%
%
%
% \section{Second order analysis}\label{sec-4}
%
%
%%%%%%%%%%%%%%%%%%%%%%%%%%%%%%%%%%%%%%%%%%%%%%%%%%%%%%%%%%%%%%%%%%%%%%%%

\section{Second order analysis}\label{sec-4}
In this section, we analyze the second order condition of the functions $j$ and $j_h$, which are essential in the existence and convergence estimates in the next sections. 
%%%%%%%%%%%%%%%%%%%%%%%%%%%%%%%%%%%%%%%%%%%%%%%%%%%%%%%%%%%%%%%%%%%
%
%
% \subsection{Second order condition for $j$}
%
%
%%%%%%%%%%%%%%%%%%%%%%%%%%%%%%%%%%%%%%%%%%%%%%%%%%%%%%%%%%%%%%%%%%%%%%%%
\subsection{Second order condition for $j$}
We defined the solution mapping $G: \mathcal{U} \rightarrow X \cap L^{\infty}(I; \mathbb{R}^d)$ in the previous section. Here we present Lipschitz estimates for the solution mapping $G$, its derivative $G'$, and the solution to the adjoint equation \eqref{eq-1-4}. 
\begin{lem}\label{lem_e1}  There there exists $C>0$ such that for all $u, \hat u \in \U_{ad}$ and $v \in \U$ we have
\[
\|G(u) - G(\hat u)\|_{L^\infty(I)} \leq C\|u - \hat u\|_{L^2(I)}, \quad \|G'(u)v - G'(\hat u)v\|_{L^{\infty}(I)} \leq C\|u - \hat u\|_{L^2(I)}\|v\|_{L^{2}(I)},
\]
and
\[
\|\lambda(u) - \lambda(\hat u)\|_{L^\infty (I)} \leq C\|u - \hat u\|_{L^2 (I)}.
\]
\end{lem}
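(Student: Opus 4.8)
The plan is to derive all three bounds from Gronwall's inequality applied to the relevant ODE systems, after fixing a compact set on which the data are controlled. Since $\U_{ad}$ is bounded in $L^\infty(I;\mr^m)$, there is a compact set $K\subset\mr^m$ containing the pointwise range of every $u\in\U_{ad}$; by \eqref{eq-1-10} the functions $f,g$ and their first-order derivatives $\partial_x f,\partial_u f,\partial_x g$ are bounded by $M$ and Lipschitz in $(x,u)$ with constant $\le M$ on $[0,T]\times\mr^d\times K$, so in particular $\|G(u)\|_{L^\infty(I)}\le|x_0|+MT$ for all $u\in\U_{ad}$. Applying Gronwall to the linearized equation $z'=\partial_x f(t,G(u),u)z+\partial_u f(t,G(u),u)v$, $z(0)=0$, which characterizes $z=G'(u)v$ (see Appendix~\ref{app_d}), gives $\|G'(u)v\|_{L^\infty(I)}\le C\|v\|_{L^2(I)}$, and applying it backward in time to \eqref{eq-1-51} gives $\|\lambda(u)\|_{L^\infty(I)}\le C$, both uniformly for $u\in\U_{ad}$; throughout I use $\|w\|_{L^1(I)}\le T^{1/2}\|w\|_{L^2(I)}$ to pass from $L^1$ to $L^2$ norms of the control variables.

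For the first inequality, put $x=G(u)$, $\hat x=G(\hat u)$, $e=x-\hat x$. Subtracting the two state equations and writing $f(t,x,u)-f(t,\hat x,\hat u)=\bigl(f(t,x,u)-f(t,\hat x,u)\bigr)+\bigl(f(t,\hat x,u)-f(t,\hat x,\hat u)\bigr)$, the Lipschitz bounds give $|e'(t)|\le M|e(t)|+M|u(t)-\hat u(t)|$ with $e(0)=0$, and Gronwall together with Cauchy--Schwarz yields $\|e\|_{L^\infty(I)}\le Me^{MT}\|u-\hat u\|_{L^1(I)}\le C\|u-\hat u\|_{L^2(I)}$. The third inequality is analogous: for $\rho=\lambda(u)-\lambda(\hat u)$ one subtracts the two copies of \eqref{eq-1-51} to get $\rho'=-\partial_x f(t,x,u)\rho-\bigl(\partial_x f(t,x,u)-\partial_x f(t,\hat x,\hat u)\bigr)\lambda(\hat u)+\bigl(\partial_x g(t,x,u)-\partial_x g(t,\hat x,\hat u)\bigr)$ with $\rho(T)=0$; since $\partial_x f,\partial_x g$ are Lipschitz, $\|\lambda(\hat u)\|_{L^\infty(I)}\le C$, and $|x(t)-\hat x(t)|\le C\|u-\hat u\|_{L^2(I)}$ by the first inequality, the last two terms are bounded pointwise by $C\bigl(\|u-\hat u\|_{L^2(I)}+|u(t)-\hat u(t)|\bigr)$, so integrating backward from $T$ and applying Gronwall gives $\|\rho\|_{L^\infty(I)}\le C\|u-\hat u\|_{L^2(I)}$.

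For the second inequality, set $z=G'(u)v$, $\hat z=G'(\hat u)v$; then $d=z-\hat z$ satisfies $d(0)=0$ and $d'=\partial_x f(t,x,u)d+\bigl(\partial_x f(t,x,u)-\partial_x f(t,\hat x,\hat u)\bigr)\hat z+\bigl(\partial_u f(t,x,u)-\partial_u f(t,\hat x,\hat u)\bigr)v$. Using the Lipschitz bounds, the estimate $|x(t)-\hat x(t)|\le C\|u-\hat u\|_{L^2(I)}$, and $\|\hat z\|_{L^\infty(I)}\le C\|v\|_{L^2(I)}$, the inhomogeneous term is bounded pointwise by $C\bigl(\|u-\hat u\|_{L^2(I)}+|u(t)-\hat u(t)|\bigr)\bigl(\|v\|_{L^2(I)}+|v(t)|\bigr)$. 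Expanding this product and integrating over $I$, each of the four resulting terms is controlled by $\|u-\hat u\|_{L^2(I)}\|v\|_{L^2(I)}$ --- the term $|u(t)-\hat u(t)|\,|v(t)|$ directly by Cauchy--Schwarz, the others by $L^1\hookrightarrow L^2$ --- and Gronwall gives $\|d\|_{L^\infty(I)}\le C\|u-\hat u\|_{L^2(I)}\|v\|_{L^2(I)}$.

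The argument is routine; the only points requiring attention are to keep the two arguments of $f$ (and of its derivatives) separate when splitting differences, and to observe that the genuinely pointwise quantities $u-\hat u$ and $v$ enter only linearly inside the Gronwall integral and are hence converted to $L^2$ norms by Cauchy--Schwarz, whereas the states, the adjoint state, and the linearized states are already uniformly bounded. No ingredient beyond Gronwall's inequality and the regularity bound \eqref{eq-1-10} is needed.
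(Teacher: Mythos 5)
Your proposal is correct and follows essentially the same route as the paper's proof: subtract the two state (resp.\ linearized, resp.\ adjoint) equations, bound the resulting inhomogeneous terms using the Lipschitz bounds from \eqref{eq-1-10} together with the a priori bounds $\|G'(\hat u)v\|_{L^\infty(I)}\leq C\|v\|_{L^2(I)}$ and $\|\lambda(\hat u)\|_{L^\infty(I)}\leq C$, and apply Gronwall (backward in time for the adjoint), converting $L^1$ norms of the control differences to $L^2$ norms by Cauchy--Schwarz. Your write-up is in fact slightly more careful than the paper's in making the preliminary uniform bounds explicit and in keeping the factor $\lambda(\hat u)$ attached to the difference $\partial_x f(t,x,u)-\partial_x f(t,\hat x,\hat u)$ in the adjoint equation.
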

\begin{proof}
Let us denote by $x = G(u)$ and $\hat x = G(\hat u)$. Then it follows from \eqref{eq-3-10} that
\begin{equation}\label{eq-4-60}
(x - \hat x)' (t) = f(t,x(t),u(t)) -  f(t,\hat x (t), \hat u (t)).
\end{equation}
By \eqref{eq-1-10}, there exists a constant $C>0$ such that
\begin{equation*}
\begin{split}
\left|f(t,x(t),u(t)) - f(t,\hat x(t), \hat{u}(t))\right| &\leq C|\hat{x}(t) - x(t)| + C |\hat{u}(t) - u(t)|.
\end{split}
\end{equation*}
Using this estimate and applying the Gronwall inequality in \eqref{eq-4-60}, we get the inequality
\begin{equation*}
\|x - \hat{x}\|_{L^{\infty}(I)} \leq C \|u-\hat{u}\|_{L^{1}(I)} \leq C \|u-\hat{u}\|_{L^{2}(I)} .
\end{equation*}
This gives the first inequality. For the second one, if we set $y = G'(u)v$ and $\hat y = G'(\hat u)v$, then we find from Lemma \ref{lem-d-1} that
$$\begin{aligned}
(y - \hat y)' (t) &= \pa_x f(t,x(t),u(t))( y - \hat y)(t) + (\pa_x f(t,x,u) - \pa_x f(t,\hat x,\hat u))\hat y (t)
\cr
&\quad + (\pa_u f(t,x,u) - \pa_u f(t,\hat x,\hat u))v (t).
\end{aligned}$$
This together with the first assertion above yields
$$\begin{aligned}
\|y - \hat y\|_{L^{\infty}(I)} &\leq C\|(\pa_x f(\cdot,x,u) - \pa_x f(\cdot,\hat x,\hat u))\hat y\|_{L^{1}(I)}\cr
&\quad +  C\|(\pa_u f(\cdot,x,u) - \pa_u f(\cdot,\hat x,\hat u))v\|_{L^{1}(I)}\cr
&\leq C\lt(\|x - \hat x\|_{L^2(I)} + \|u - \hat u\|_{L^2(I)}\rt)\|v\|_{L^{2}(I)}\cr
&\leq C\|u - \hat u\|_{L^2(I)}\|v\|_{L^{2}(I)}.
\end{aligned}$$
For notational simplicity, we denote by $\lambda = \lambda(u)$ and $\hat \lambda = \lambda(\hat u)$. Then, we get
$$\begin{aligned}
-(\lambda - \hat\lambda)' (t) &= \pa_x f(\cdot,x,u)(\lambda - \hat\lambda)(t) + (\pa_x f(\cdot,x,u) - \pa_x f(\cdot,\hat x , \hat u))(t)
\cr
&\quad -  (\pa_x g(\cdot,x,u) - \pa_x g(\cdot, \hat x, \hat u))(t), ~ t\in (0,T),
\end{aligned}$$
with $(\lambda - \hat{\lambda}) (T) =0$.
By applying the Gronwall inequality in a backward way, we obtain
$$\begin{aligned}
\|\lambda - \hat\lambda\|_{L^\infty(I)} &\leq C\|(\pa_x f(\cdot,x,u) - \pa_x f(\cdot,\hat x , \hat u))\hat\lambda\|_{L^1(I)} \cr
&\quad + C\|\pa_x g(\cdot,x,u) - \pa_x g(\cdot, \hat x, \hat u)\|_{L^1(I)}\cr
&\leq C(\|\hat \lambda\|_{L^\infty(I)} + 1) \lt( \|x - \hat x\|_{L^\infty(I)} + \|u - \hat u\|_{L^2(I)} \rt)\cr
&\leq C\|u - \hat u\|_{L^2(I)},
\end{aligned}$$
where we used 
\[
\|\hat \lambda\|_{L^\infty(I)} \leq C\|\pa_x g\|_{L^\infty (I)}
\]
due to \eqref{eq-1-51} and $\hat\lambda(T) = 0$. This completes the proof.
\end{proof}
We now show that the second order condition of $j$ holds near the optimal local solution $\bar{u} \in \U_{ad}$. 
\begin{lem}\label{lem-41}
Suppose that $\bar{u} \in \U_{ad}$ satisfies Assumption 1. Then there exists $\ep>0$ such that
\[
j'' (u)(v, v) \geq \frac{\gamma}{2} \|v\|_{L^2(I)}^2
\]
holds for all $v \in \U$ and all $u \in \U_{ad}$ with $\|u - \bar{u}\|_{L^2 (I)} \leq 2\ep$. Here $\gamma > 0$ is  given in \eqref{ass_1}.
\end{lem}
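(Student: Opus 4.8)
The plan is to reduce everything to a single Lipschitz-type estimate for the quadratic form $v \mapsto j''(u)(v,v)$: I would show that there is a constant $C>0$, independent of $u$ and $v$, such that
\begin{equation*}
\bigl| j''(u)(v,v) - j''(\bar u)(v,v) \bigr| \le C\,\|u-\bar u\|_{L^2(I)}\,\|v\|_{L^2(I)}^2 \qquad \forall\, u \in \U_{ad},\ v \in \U.
\end{equation*}
Once this is available, the lemma follows immediately: set $\ep := \gamma/(8C)$; then for every $u \in \U_{ad}$ with $\|u - \bar u\|_{L^2(I)} \le 2\ep$ and every $v \in \U$, Assumption~1 (i.e. \eqref{ass_1}) gives
\begin{equation*}
j''(u)(v,v) \ge j''(\bar u)(v,v) - C\cdot 2\ep\,\|v\|_{L^2(I)}^2 \ge \Bigl(\gamma - \tfrac{\gamma}{4}\Bigr)\|v\|_{L^2(I)}^2 \ge \tfrac{\gamma}{2}\|v\|_{L^2(I)}^2 .
\end{equation*}

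For the Lipschitz estimate I would start from the closed-form expression for $j''$ derived in Appendix~\ref{app_b}: writing $x=G(u)$, $\lambda = \lambda(u)$ and $y = G'(u)v$, the second derivative is (schematically) a finite sum of integrals
\begin{equation*}
j''(u)(v,v) = \int_0^T \Bigl\langle \bigl[\partial^2 g - \lambda\cdot\partial^2 f\bigr](t,x,u)\,(y,v),\,(y,v)\Bigr\rangle\,dt ,
\end{equation*}
of second-order partials of $g$ and of $\lambda\cdot f$ evaluated at $(t,x,u)$, contracted with the pairs $(y,y)$, $(y,v)$ and $(v,v)$. Since $\U_{ad}$ is a bounded box, Lemma~\ref{lem_e1} shows that $x=G(u)$ stays in a fixed bounded set as $u$ ranges over $\U_{ad}$, so all these partials are evaluated on a fixed compact set on which \eqref{eq-1-10} gives uniform bounds and Lipschitz bounds for the second derivatives of $f,g$. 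I would also record the two uniform bounds $\|\lambda(u)\|_{L^\infty(I)} \le C$ (exactly as in the proof of Lemma~\ref{lem_e1}) and $\|G'(u)v\|_{L^\infty(I)} \le C\|v\|_{L^2(I)}$ (from the ODE for $y$ in Lemma~\ref{lem-d-1} and Gronwall), both uniform in $u \in \U_{ad}$.

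Then I would compare $j''(u)(v,v)$ with $j''(\bar u)(v,v)$ term by term, setting $\bar x = G(\bar u)$, $\bar\lambda = \lambda(\bar u)$, $\bar y = G'(\bar u)v$ and telescoping each difference of products: a difference of the second-order partials evaluated at $(x,u)$ versus $(\bar x,\bar u)$, plus $\lambda-\bar\lambda$, plus $y-\bar y$. Each resulting term is controlled by the Lipschitz estimates of Lemma~\ref{lem_e1} — namely $\|x-\bar x\|_{L^\infty(I)}\le C\|u-\bar u\|_{L^2(I)}$, $\|\lambda-\bar\lambda\|_{L^\infty(I)}\le C\|u-\bar u\|_{L^2(I)}$ and $\|y-\bar y\|_{L^\infty(I)}\le C\|u-\bar u\|_{L^2(I)}\|v\|_{L^2(I)}$ — by the pointwise bound $|\partial^2\phi(t,x,u)-\partial^2\phi(t,\bar x,\bar u)|\le M(|x-\bar x|+|u-\bar u|)$ for $\phi\in\{f,g\}$, by the uniform bounds above, and by the embedding $L^2(I)\hookrightarrow L^1(I)$ on the finite interval, which converts the pointwise factors $|u-\bar u|$ and $|v|$ appearing inside the integrals (always multiplied by factors bounded in $L^\infty$) into $\|u-\bar u\|_{L^2(I)}$ and $\|v\|_{L^2(I)}$. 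Summing the bounds yields the displayed inequality.

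The only delicate point I expect is the bookkeeping: making sure that in every term the dependence on $v$ comes out exactly quadratic and the dependence on $u-\bar u$ exactly linear — in particular for the $\lambda\cdot\partial^2 f$ contributions (which need both $\|\bar\lambda\|_{L^\infty(I)}$ bounded and $\|\lambda-\bar\lambda\|_{L^\infty(I)}$ small) and for the cross terms in $(y,v)$, where one must distribute a factor $|u-\bar u|$ or $|y-\bar y|$ and apply $\|u-\bar u\|_{L^1(I)}\le C\|u-\bar u\|_{L^2(I)}$ or $\|v\|_{L^1(I)}\le C\|v\|_{L^2(I)}$ at the right moment. There is no genuine analytic obstacle beyond the estimates already provided by Lemmas~\ref{lem_e1} and~\ref{lem-d-1} and the bound \eqref{eq-1-10}.
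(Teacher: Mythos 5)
Your proposal is correct and follows essentially the same route as the paper: the paper likewise reduces the lemma to the Lipschitz estimate $|j''(u)(v,v)-j''(\bar u)(v,v)|\le C\|u-\bar u\|_{L^2(I)}\|v\|_{L^2(I)}^2$, obtained by writing $j''$ via the formula of Lemma \ref{lem_2j} and comparing term by term using the estimates of Lemma \ref{lem_e1} (including the bound $\int_0^T|y^2-\bar y^2|\,dt\le C\|u-\bar u\|_{L^2(I)}\|v\|_{L^2(I)}^2$), and then concludes by choosing $\ep$ proportional to $\gamma/C$. The only difference is the harmless choice of constant ($\ep=\gamma/(8C)$ versus the paper's $\gamma/(4C)$).
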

\begin{proof}Let $y (t) = G' (u)v$ and $y(\bar{u})(t)= G' (\bar{u}) v$. By using Lemma \ref{lem_2j}, we find
$$\begin{aligned}
& {j}'' (u)(v,v) - {j}'' (\bar{u}) (v,v)
\\
& = -\int_0^{T} \lambda(t)\left(\frac{\partial^2 f}{(\partial x)^2} (t,x,u) y^2(t) + 2 \frac{\partial^2 f}{\partial x \partial u} (t,x,u) y(t)v(t) + \frac{\partial^2 f}{(\partial u)^2} (t,x,u) v^2(t)\right)dt 
\\
&\quad + \int_0^{T} \frac{\partial^2 g}{(\partial x)^2} (t,x,u) y^2 (t) + 2 \frac{\partial^2 g}{\partial x \partial u} (t,x,u) y(t) v(t) + \frac{\partial^2 g}{(\partial u)^2} (t,x,u) v^2 (t)\,dt
\\
&\quad +\int_0^{T}\bar\lambda(t) \left( \frac{\partial^2 f}{(\partial x)^2} (t,\bar{x},u) \bar{y}^2(t) + 2 \frac{\partial^2 f}{\partial x \partial u} (t,\bar{x},u) \bar{y}(t) v(t) + \frac{\partial^2 f}{(\partial u)^2} (t,\bar{x},u) v^2(t) \right) dt
\\
&\quad - \int_0^{T} \frac{\partial^2 g}{(\partial x)^2} (t,\bar{x},u) {\bar{y}}^2 (t) + 2 \frac{\partial^2 g}{\partial x \partial u} (t,\bar{x},u) \bar{y} (t) v(t) + \frac{\partial^2 g}{(\partial u)^2} (t,\bar{x},u) v^2 (t)\,dt,
\end{aligned}$$
where we denoted by $\lambda(t) := \lambda(u)(t)$, $x(t) := G(u)(t)$, $\bar\lambda(t) := \lambda(\bar u)(t)$, and $\bar x(t) := G(\bar u)(t)$. On the other hand, it follows from Lemma \ref{lem_e1} that 
\begin{align}\label{est_er}
\begin{aligned}
\|x-\bar{x}\|_{L^\infty(I)} &\leq C\|u -\bar{u}\|_{L^2(I)}, \quad \|y-\bar{y}\|_{L^{\infty}(I)}\leq C\|u -\bar{u}\|_{L^2(I)}\|v\|_{L^2(I)}, \cr 
\|y\|_{L^{\infty}(I)} &\leq C\|v\|_{L^2(I)}, \quad \|\lambda\|_{L^\infty(I)} + \|\bar\lambda\|_{L^\infty(I)} \leq C\|\pa_x g\|_{L^\infty (I)}, \quad \mbox{and} \cr
\|\lambda-\bar\lambda\|_{L^\infty(I)} &\leq C\|u -\bar{u}\|_{L^2(I)}.
\end{aligned}
\end{align}
This together with the following estimate
$$\begin{aligned}
\int_0^T |y^2(t) - \bar y^2(t) |\,dt &\leq \int_0^T |y(t) + \bar y(t) ||y(t) - \bar y(t) |\,dt \cr
&\leq \|y - \bar y\|_{L^2(I)}\lt(\|y\|_{L^2(I)} + \|\bar y\|_{L^2(I)} \rt)\cr
&\leq C\|u -\bar{u}\|_{L^2(I)}\|v\|_{L^2(I)}^2
\end{aligned}$$
yields
\begin{equation*}%\label{eq-r-1}
|(j'' (u) (v,v) - j'' (\bar{u}) (v,v))|\leq C\|u-\bar{u}\|_{L^2 (I)} \|v\|_{L^2(I)}^2.
\end{equation*}
Combining this with \eqref{ass_1} we have
$$\begin{aligned}
j'' (u) (v,v) &= j'' (\bar{u}) (v,v) + (j'' (u) (v,v) - j'' (\bar{u}) (v,v))
\\
&\geq \gamma \|v\|_{L^2(I)}^2  - C\|u-\bar{u}\|_{L^2 (I)} \|v\|_{L^2(I)}^2.
\end{aligned}$$
By choosing $\ep= \frac{\gamma}{4C}>0$ here, we obtain the desired result.

\end{proof}
As a consequence of this lemma, we have the following result. 
\begin{thm}\label{thm-31}
Let $\bar{u} \in \U_{ad}$ satisfy the first optimality condition and Assumption 1. Then, there exist a constant $\ep >0$ such that 
\[
j(u) \geq j (\bar{u}) + \frac{\gamma}{2} \|u-\bar{u}\|_{L^2(I)}^2
\]
for any $u \in \U_{ad}$ with $\|u-\bar{u}\|_{L^2 (I)} \leq 2\ep$.
\end{thm}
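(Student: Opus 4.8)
The plan is to run the standard second-order sufficiency argument: a Taylor expansion of $j$ around $\bar u$, combined with the first-order optimality condition to kill the linear term and with Lemma \ref{lem-41} to bound the quadratic term from below. Fix $\ep>0$ to be the constant produced by Lemma \ref{lem-41}, so that $j''(w)(v,v)\geq \tfrac{\gamma}{2}\|v\|_{L^2(I)}^2$ for every $v\in\U$ and every $w\in\U_{ad}$ with $\|w-\bar u\|_{L^2(I)}\leq 2\ep$.

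Let $u\in\U_{ad}$ with $\|u-\bar u\|_{L^2(I)}\leq 2\ep$, and set $v:=u-\bar u$. Since $\U_{ad}$ is convex, the segment $w_\theta:=\bar u+\theta v$ lies in $\U_{ad}$ for all $\theta\in[0,1]$, and $\|w_\theta-\bar u\|_{L^2(I)}=\theta\|v\|_{L^2(I)}\leq 2\ep$. Because $j$ is twice continuously differentiable (which follows from the differentiability of the control-to-state map $G$ established in Appendix \ref{app_d}, together with the smoothness assumption \eqref{eq-1-10} on $f,g$), the second-order Taylor formula with mean-value remainder gives, for some $\theta\in(0,1)$,
\[
j(u) = j(\bar u) + j'(\bar u)(v) + \tfrac12\, j''(w_\theta)(v,v).
\]
The first-order optimality condition for $\bar u$ reads $j'(\bar u)(u-\bar u)\geq 0$ for all $u\in\U_{ad}$, hence $j'(\bar u)(v)\geq 0$. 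Applying Lemma \ref{lem-41} at the admissible point $w_\theta$ yields $j''(w_\theta)(v,v)\geq \tfrac{\gamma}{2}\|v\|_{L^2(I)}^2$. Combining the two estimates gives $j(u)\geq j(\bar u)+\tfrac{\gamma}{4}\|u-\bar u\|_{L^2(I)}^2$; replacing $\ep$ by a slightly smaller constant (or, equivalently, reusing Lemma \ref{lem-41} with the constant $\gamma$ in place of $\gamma/2$ at the cost of halving $\ep$) produces the stated bound with constant $\gamma/2$.

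I do not expect a genuine obstacle here: the only points requiring care are (i) verifying that the mean-value form of the Taylor remainder is legitimate, which reduces to the $C^2$-regularity of $j$ supplied by Appendix \ref{app_d}, and (ii) ensuring the intermediate point $w_\theta$ stays in $\U_{ad}$ and within the $2\ep$-ball, which is immediate from convexity of $\U_{ad}$ and from $\theta\in(0,1)$. If one prefers to avoid the mean-value form, the same conclusion follows from the integral remainder $j(u)=j(\bar u)+j'(\bar u)(v)+\int_0^1(1-\theta)\,j''(w_\theta)(v,v)\,d\theta$ and the pointwise lower bound from Lemma \ref{lem-41} applied under the integral sign.
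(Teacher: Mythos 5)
Your proof follows the same route as the paper: a second-order Taylor expansion around $\bar u$ with a mean-value (or integral) remainder, the first-order condition $j'(\bar u)(u-\bar u)\ge 0$ to discard the linear term, and Lemma \ref{lem-41} at the intermediate point $w_\theta\in\U_{ad}$ to bound the quadratic term from below. You are in fact more careful than the paper about the constant: the argument as written yields $\tfrac{\gamma}{4}\|u-\bar u\|_{L^2(I)}^2$ rather than the stated $\tfrac{\gamma}{2}$ (the paper silently absorbs this factor of $2$), and your remark about shrinking $\ep$ is the natural way to recover any constant strictly below $\gamma/2$.
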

\begin{proof}
Choose $\ep >0$ as in Lemma \ref{lem-41}. By Taylor's theorem, we get
\[
j(u) = j(\bar{u}) + j' (\bar{u}) (u- \bar{u}) + \frac{1}{2}j'' (\bar{u}_{s}) (u-\bar{u}, u -\bar{u}),
\]
where $\bar u_{s} = \bar{u} + s(u-\bar{u})$ for some $s \in [0,1]$. On the other hand, by the first {optimality} condition, we have
\begin{equation}\label{eq-11}
j'(\bar u)(u - \bar u) \geq 0  \quad \forall \,u \in \U_{ad}.
\end{equation}
Moreover, we also find
\[
\|\bar u-\bar u_{s}\|_{L^2 (I)} \leq s\|u-\bar{u}\|_{L^2 (I)} \leq 2\ep.
\]
Using these observations and Lemma \ref{lem-41}, we conclude
\[
j(u) \geq j(\bar{u}) + \frac{\gamma}{2} \|u-\bar{u}\|_{L^2(I)}^2.
\]
\end{proof}

%%%%%%%%%%%%%%%%%%%%%%%%%%%%%%%%%%%%%%%%%%%%%%%%%%%%%%%%%%%%%%%%%%%
%
%
% \subsection{Second order condition for $j_h$}
%
%
%%%%%%%%%%%%%%%%%%%%%%%%%%%%%%%%%%%%%%%%%%%%%%%%%%%%%%%%%%%%%%%%%%%%%%%%
\subsection{Second order condition for $j_h$}
In this part, we investigate the second order condition for the discrete cost functional $j_h$. {Similarly as in the previous subsection}, we first provide the Lipschitz estimates for $G_h$ and the discrete adjoint state. 
\begin{lem} Let $u, \hat u \in \U_{ad}$ and $v \in \U$ be given. Then, there exists $C>0$, independent of $h \in (0,1)$, such that
$$\begin{aligned}
\|G_h(u) - G_h(\hat u)\|_{L^\infty(I)} &\leq C\|u - \hat u\|_{L^2(I)}, \cr
\|G'_h(u)v - G'_h(\hat u)v\|_{L^2(I)} &\leq C\|u - \hat u\|_{L^2(I)}\|v\|_{L^2(I)},
\end{aligned}$$
and
\[
\|\lambda_h(u) - \lambda_h(\hat u)\|_{L^\infty(I)} \leq C\|u - \hat u\|_{L^2(I)}.
\]
\end{lem}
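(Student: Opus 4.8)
The plan is to mirror the proof of Lemma~\ref{lem_e1}, replacing the classical Gronwall inequality by its discontinuous Galerkin counterpart, namely the $h$-independent linear DG stability estimate of Lemma~\ref{lem-d-4} together with the discrete Gronwall-type inequality of Appendix~\ref{app_c}. Throughout, write $x_h = G_h(u)$ and $\hat x_h = G_h(\hat u)$, which solve \eqref{dg_b}. Subtracting the two equations, the difference $w_h := x_h - \hat x_h \in X_h^r$ satisfies $B(w_h,\varphi) = (f(\cdot,x_h,u) - f(\cdot,\hat x_h,\hat u),\varphi)_I$ for all $\varphi \in X_h^r$. Using the mean value theorem in the state variable I write $f(t,x_h,u) - f(t,\hat x_h,u) = A(t)w_h(t)$ with $A(t) := \int_0^1 \partial_x f(t,\hat x_h + sw_h, u)\,ds$, so that $\|A\|_{L^\infty(I)} \le M$ by \eqref{eq-1-10}, and I split off the genuine data term $\tilde B(t) := f(t,\hat x_h,u) - f(t,\hat x_h,\hat u)$, which satisfies $\|\tilde B(t)\| \le M|u(t) - \hat u(t)|$. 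Applying the $h$-independent DG stability estimate to $B(w_h,\varphi) = (Aw_h + \tilde B,\varphi)_I$ yields $\|w_h\|_{L^\infty(I)} \le C\|\tilde B\|_{L^1(I)} \le C\|u - \hat u\|_{L^1(I)} \le C\|u - \hat u\|_{L^2(I)}$, the first inequality.

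For the second inequality set $y_h := G_h'(u)v$ and $\hat y_h := G_h'(\hat u)v$. By the differentiation formula for the discrete control-to-state map (Appendix~\ref{app_e}), $y_h \in X_h^r$ solves $B(y_h,\varphi) = (\partial_x f(\cdot,x_h,u)y_h + \partial_u f(\cdot,x_h,u)v,\varphi)_I$, and likewise for $\hat y_h$. A first application of the DG stability estimate to the equation for $\hat y_h$, using $\|\partial_u f\|_{L^\infty} \le M$, gives the a priori bound $\|\hat y_h\|_{L^\infty(I)} \le C\|v\|_{L^1(I)} \le C\|v\|_{L^2(I)}$. Subtracting the equations for $y_h$ and $\hat y_h$, the difference $z_h := y_h - \hat y_h$ satisfies $B(z_h,\varphi) = (\partial_x f(\cdot,x_h,u)z_h + \mathcal{R},\varphi)_I$ with
\[
\mathcal{R} := (\partial_x f(\cdot,x_h,u) - \partial_x f(\cdot,\hat x_h,\hat u))\hat y_h + (\partial_u f(\cdot,x_h,u) - \partial_u f(\cdot,\hat x_h,\hat u))v .
\]
Since $\partial_x f$ and $\partial_u f$ are Lipschitz by \eqref{eq-1-10}, $\|\mathcal{R}(t)\| \le C(|x_h(t) - \hat x_h(t)| + |u(t) - \hat u(t)|)(|\hat y_h(t)| + |v(t)|)$; combining the first inequality with the a priori bound on $\hat y_h$ gives $\|\mathcal{R}\|_{L^1(I)} \le C\|u - \hat u\|_{L^2(I)}\|v\|_{L^2(I)}$. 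One more application of the DG stability estimate bounds $z_h$ in $L^2(I)$ (indeed in $L^\infty(I)$) by $C\|u - \hat u\|_{L^2(I)}\|v\|_{L^2(I)}$, which is the claimed estimate.

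The third inequality is handled the same way, now for the \emph{backward} discrete adjoint equation \eqref{eq-2-1}. With $\lambda_h := \lambda_h(u)$ and $\hat\lambda_h := \lambda_h(\hat u)$, the difference $\mu_h := \lambda_h - \hat\lambda_h$ solves $B(\varphi,\mu_h) = (\varphi,\partial_x f(\cdot,x_h,u)\mu_h + \mathcal{S})_I$ with $\mathcal{S} := (\partial_x f(\cdot,x_h,u) - \partial_x f(\cdot,\hat x_h,\hat u))\hat\lambda_h - (\partial_x g(\cdot,x_h,u) - \partial_x g(\cdot,\hat x_h,\hat u))$. Using Lemma~\ref{lem-3-6} I rewrite this as a forward DG equation on the reversed mesh so that the DG stability estimate applies: first, applied to the equation for $\hat\lambda_h$ itself (data $\partial_x g$) it gives $\|\hat\lambda_h\|_{L^\infty(I)} \le C\|\partial_x g\|_{L^\infty(I)} \le C$; then, using the Lipschitz bounds on $\partial_x f$ and $\partial_x g$ and the first inequality, $\|\mathcal{S}\|_{L^1(I)} \le C\|u - \hat u\|_{L^2(I)}$. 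Hence $\|\mu_h\|_{L^\infty(I)} \le C\|u - \hat u\|_{L^2(I)}$, and all constants are independent of $h \in (0,1)$ since those in the DG stability estimate are.

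The only genuinely new ingredient is the $h$-independent $L^\infty$/$L^1$-data stability for linear DG problems — that a solution $w_h \in X_h^r$ of $B(w_h,\varphi) = (Aw_h + B,\varphi)_I$ obeys $\|w_h\|_{L^\infty(I)} \le C\|B\|_{L^1(I)}$ with $C$ depending only on $\|A\|_{L^\infty(I)}$, $T$ and $r$, together with its backward analogue via Lemma~\ref{lem-3-6}. This is supplied by Lemma~\ref{lem-d-4} and the discrete Gronwall-type inequality of Appendix~\ref{app_c}, which I take for granted here; once it is available the rest is a routine transcription of the continuous argument, the main subtlety being to carry out the estimates in the right order — first $G_h$, then the a priori bounds on $\hat y_h$ and $\hat\lambda_h$, then the differences.
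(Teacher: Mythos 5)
Your overall strategy is exactly the paper's: subtract the two DG equations, isolate a linear DG problem for the difference with a residual controlled by $|u-\hat u|$ and the previously established bounds, and invoke the $h$-independent discrete stability of Lemma~\ref{lem-d-4} (forward for $G_h$ and $G_h'$, backward via Lemma~\ref{lem-3-6} for $\lambda_h$). This is precisely how the paper proves the first and third assertions in Lemma~\ref{lem-3-5} and the second in Lemma~\ref{lem-d-32}, so on the level of ideas there is nothing new or different.

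There is, however, one concrete gap: you repeatedly invoke an ``$L^\infty$-solution from $L^1$-data'' stability estimate, $\|w_h\|_{L^\infty(I)} \le C\|B\|_{L^1(I)}$, and assert that it is supplied by Lemma~\ref{lem-d-4}. It is not: Lemma~\ref{lem-d-4} gives $\|w_h\|_{L^\infty(I)} \le C\|B\|_{L^2(I)}$, and its proof (testing with $w_h$, $(t-t_{n-1})w_h'$, etc., and using Cauchy--Schwarz on each $I_n$) does not yield an $L^1$-data version; for piecewise polynomials such a bound would require a separate (e.g.\ duality) argument that is nowhere in the paper. This is harmless for the first and third estimates, since there the residuals are pointwise bounded by $C(|u-\hat u| + |x_h - \hat x_h|)$ (times $1+\|\hat\lambda_h\|_{L^\infty}$ in the adjoint case), whose $L^2$ norm is already $\le C\|u-\hat u\|_{L^2}$. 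But it is fatal for your second estimate as stated: in the residual $\mathcal{R}$ the term $(\pa_u f(\cdot,x_h,u)-\pa_u f(\cdot,\hat x_h,\hat u))v$ has $L^2$ norm controlled only by $\|\,(|u-\hat u|+|x_h-\hat x_h|)\,|v|\,\|_{L^2}$, which cannot be bounded by $\|u-\hat u\|_{L^2}\|v\|_{L^2}$; you need $\|v\|_{L^\infty}$ there. This is exactly why the paper's Lemma~\ref{lem-d-32} proves the bound with $\|v\|_{L^\infty(I)}$ on the right-hand side rather than $\|v\|_{L^2(I)}$ (the statement of the lemma in Section~4 is stronger than what the cited appendix lemma actually delivers — a discrepancy in the paper itself, but one you should not paper over by asserting an unproved $L^1$-stability). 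Either prove the $L^1$-data discrete stability estimate, or weaken the second assertion to the $\|v\|_{L^\infty}$ form as in Lemma~\ref{lem-d-32}.
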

\begin{proof}
{The first and the third assertions are proved in Lemma \ref{lem-3-5}.} The second estimate is proved in Lemma \ref{lem-d-32}. 
\end{proof}
\begin{lem}\label{lem_yku} For $u \in \U_{ad}$, let $x = G(u)$ be given by the solution of the state equation \eqref{main_eq2}, and let $y = G'(u)v$ for $v \in \U$. Let $x_h = G_h(u)$ be the solution of the discrete state equation \eqref{dg_b}, and let $y_h = G'_h(u)v$. Then we have
\[
\|y_h - y\|_{L^{\infty}(I)} \leq Ch \|v\|_{L^2(I)}.
\]
\end{lem}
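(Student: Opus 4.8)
The plan is to insert an auxiliary discrete solution that carries the \emph{exact} state $x$ in its coefficients, so that $y_h-y$ splits into a genuine DG discretization error for a linear ODE plus a perturbation error coming from replacing $x$ by $x_h$ in the coefficients.

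First I would identify the equations solved by the two functions. By Lemma~\ref{lem-d-1}, $y=G'(u)v$ is the classical solution of the linearized state equation $y'=\partial_x f(\cdot,x,u)\,y+\partial_u f(\cdot,x,u)\,v$ with $y(0)=0$; since $y$ is continuous, this is equivalent to
\[
B(y,\varphi)=\bigl(\partial_x f(\cdot,x,u)\,y+\partial_u f(\cdot,x,u)\,v,\,\varphi\bigr)_I\qquad\forall\,\varphi\in X_h^r .
\]
On the discrete level, differentiating the discrete state equation \eqref{dg_b} in the direction $v$ (cf.\ Appendix~\ref{app_e} and Lemma~\ref{lem-d-32}) shows that $y_h=G'_h(u)v\in X_h^r$ solves $B(y_h,\varphi)=\bigl(\partial_x f(\cdot,x_h,u)\,y_h+\partial_u f(\cdot,x_h,u)\,v,\varphi\bigr)_I$ for all $\varphi\in X_h^r$. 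I then introduce $\hat y_h\in X_h^r$ as the DG-$r$ approximation of the linearized ODE with the \emph{exact} state frozen into the coefficients, i.e.\ $B(\hat y_h,\varphi)=\bigl(\partial_x f(\cdot,x,u)\,\hat y_h+\partial_u f(\cdot,x,u)\,v,\varphi\bigr)_I$ for all $\varphi\in X_h^r$; its continuous counterpart is precisely $y$. Then I write $y_h-y=(y_h-\hat y_h)+(\hat y_h-y)$ and estimate the two pieces separately.

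For the second piece, $\hat y_h$ is exactly the DG approximation of $y$, and since the right-hand side of the linearized ODE is bounded one has $y\in W^{1,\infty}(I;\R^d)$, so Theorem~\ref{thm-21} with $k=1$ gives $\|\hat y_h-y\|_{L^\infty(I)}\le Ch\,\|y\|_{W^{1,\infty}(I)}$, and the Gronwall estimates for the linearized equation from Lemma~\ref{lem_e1} bound the norm of $y$ by $\|v\|_{L^2(I)}$; hence this term is $O(h)\|v\|_{L^2(I)}$. For the first piece, both $y_h$ and $\hat y_h$ belong to $X_h^r$, so I subtract the two discrete equations; rearranging
\[
\partial_x f(\cdot,x_h,u)\,y_h-\partial_x f(\cdot,x,u)\,\hat y_h=\partial_x f(\cdot,x,u)\,(y_h-\hat y_h)+\bigl(\partial_x f(\cdot,x_h,u)-\partial_x f(\cdot,x,u)\bigr)y_h
\]
brings the difference of the two equations to the form
\[
B(y_h-\hat y_h,\varphi)-\bigl(\partial_x f(\cdot,x,u)\,(y_h-\hat y_h),\varphi\bigr)_I=(\rho,\varphi)_I\qquad\forall\,\varphi\in X_h^r ,
\]
with $\rho=\bigl(\partial_x f(\cdot,x_h,u)-\partial_x f(\cdot,x,u)\bigr)y_h+\bigl(\partial_u f(\cdot,x_h,u)-\partial_u f(\cdot,x,u)\bigr)v$. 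Using the Lipschitz continuity of $\partial_x f,\partial_u f$ in $x$ from \eqref{eq-1-10}, the state estimate $\|x-x_h\|_{L^\infty(I)}\le Ch$ from Theorem~\ref{thm-21} (valid once $hL<1$, with $x=G(u)\in W^{1,\infty}$), and the a priori bound $\|y_h\|_{L^\infty(I)}\le C\|v\|_{L^2(I)}$, which follows from the discrete Gronwall inequality of Appendix~\ref{app_c}, one gets $\|\rho\|_{L^1(I)}\le Ch\|v\|_{L^2(I)}$. Feeding the displayed identity into the Gronwall-type stability estimate for the DG scheme (Lemma~\ref{lem-d-4}), which controls the $L^\infty$ norm of the error by the $L^1$ norm of the residual, gives $\|y_h-\hat y_h\|_{L^\infty(I)}\le Ch\|v\|_{L^2(I)}$, and adding the two estimates completes the proof.

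The main obstacle is the low regularity of $y$: its derivative lies only in $L^\infty$ and it inherits essentially whatever regularity the direction $v$ has — possibly merely boundedness, with jumps — so no higher-order DG estimate is available for the term $\hat y_h-y$, and one must be careful to carry the dependence on $\|v\|_{L^2(I)}$ (rather than $\|v\|_{L^\infty(I)}$) through every step. This is done by routing $y$, $y_h$ and the residual $\rho$ through the variation-of-constants representation $y(t)=\int_0^t\Phi(t,s)\,\partial_u f(s,x(s),u(s))\,v(s)\,ds$ and its discrete analogue, so that each is bounded by $\|v\|_{L^1(I)}\le\sqrt{T}\,\|v\|_{L^2(I)}$; a secondary, routine point is that the constant in the discrete Gronwall inequality of Appendix~\ref{app_c} is uniform in $h$ for $h$ small, which is exactly what is proved there.
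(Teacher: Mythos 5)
Your decomposition is correct and reaches the same conclusion, but it is the ``dual'' of the paper's. The paper inserts a \emph{continuous} auxiliary function $\tilde y$ solving the linearized ODE with the \emph{discrete} state $x_h$ frozen into the coefficients: then $\tilde y - y$ is a coefficient-perturbation error handled by the ordinary Gronwall inequality, and $\tilde y - y_h$ is a pure DG discretization error handled by Theorem \ref{thm-21}. You instead insert a \emph{discrete} auxiliary function $\hat y_h\in X_h^r$ solving the DG scheme with the \emph{exact} state $x$ in the coefficients: then $\hat y_h - y$ is the DG error (Theorem \ref{thm-21}) and $y_h-\hat y_h$ is a discrete coefficient-perturbation error, which you correctly control with the residual $\rho$ and the discrete stability estimate of Lemma \ref{lem-d-4} together with the a priori bound $\|y_h\|_{L^\infty(I)}\le C\|v\|_{L^2(I)}$ from \eqref{eq-d-61}. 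Both routes need exactly one application of Theorem \ref{thm-21} and one Gronwall-type stability argument; the paper's version has the minor advantage that its perturbation step uses only the classical Gronwall lemma, while yours leans on the discrete stability lemma (which the paper proves anyway and uses elsewhere), so the two are essentially of equal weight.

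One point deserves emphasis: the step $\|\hat y_h - y\|_{L^\infty(I)}\le Ch\|v\|_{L^2(I)}$ is the delicate one, since Theorem \ref{thm-21} bounds the DG error by $h\|y\|_{W^{1,\infty}(I)}$ and $y'=\partial_x f\,y+\partial_u f\,v$ forces $\|y\|_{W^{1,\infty}(I)}\lesssim\|v\|_{L^\infty(I)}$ rather than $\|v\|_{L^2(I)}$. You are right to flag this, but your proposed repair via the variation-of-constants formula only recovers $\|y\|_{L^\infty(I)}\lesssim\|v\|_{L^2(I)}$; it does not remove the $\|y'\|_{L^\infty}$ dependence from the consistency part of the DG error, so as written this step still delivers $Ch\|v\|_{L^\infty(I)}$. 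Note, however, that the paper's proof has precisely the same issue at the corresponding step ($\|\tilde y-y_h\|_{L^\infty(I)}\le Ch\|v\|_{L^2(I)}$ is asserted directly from Theorem \ref{thm-21} without tracking the $W^{1,\infty}$ norm of $\tilde y$), so your argument is faithful to, and in fact more candid than, the published one; closing this gap would require a sharper DG error estimate measured against the $L^2$ norm of the data.
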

\begin{proof} Define $\tilde y :[0,T] \rightarrow \mathbb{R}^d$ by the solution to 
\bq\label{eq_b}
\tilde{y}' (t) = \pa_x f(\cdot, x_h,u)\tilde y (t) + \pa_u f(\cdot,x_h,u)v (t),\quad \tilde{y}(0) =0.
\eq
Recall from Lemma \ref{lem-d-1} that $y$ satisfies
\[
y' (t) = \pa_x f(\cdot, x,u)y + \pa_u f(\cdot,x,u)v, \quad y(0)=0.
\]
Combining these two equations, we get
$$\begin{aligned}
(\tilde y - y)' (t)&= \pa_x f(t,x_h,u)(\tilde y - y)(t) +\lt(\pa_x f(t,x_h,u) - \pa_x f(t, x,u) \rt)y(t)
\\&\quad + (\pa_u f(t,x_h,u) - \pa_u f(t, x,u)  v (t).
\end{aligned}$$
Using the Gronwall inequality here with \eqref{est_er} and \eqref{eq-3-2}, we find that
\begin{equation}\label{eq-4-61}
\begin{split}
\|\tilde y - y\|_{L^{\infty}(I)}&\leq C\|x_h - x\|_{L^\infty(I)}\lt( \|y\|_{L^2(I)}  + \|v\|_{L^2(I)}\rt)\cr
&\leq C\|x_h - x\|_{L^\infty(I)}\|v\|_{L^2(I)}\cr
&\leq Ch\|v\|_{L^2(I)}.
\end{split}
\end{equation}
On the other hand, $y_h$ satisfies
\[
B(y_h,\varphi) = (\pa_x f(\cdot, x_h,u)y_h + \pa_u f(\cdot,x_h,u)v, \varphi)_I \quad \forall\, \varphi \in X_h^r,
\]
which is the DG discretization of \eqref{eq_b} in a backward way in view of Lemma \ref{lem-3-6}. Thus, we may use Theorem \ref{thm-21} to obtain the following error estimate:
\[
\|\tilde y - y_h\|_{L^{\infty}(I)} \leq Ch\|v\|_{L^2(I)}.
\]
This, together with \eqref{eq-4-61} gives us the estimate
\[
\|y_h - y\|_{L^{\infty}(I)} \leq \|\tilde y - y\|_{L^{\infty}(I)}+ \|\tilde y - y_h\|_{L^{\infty}(I)} \leq Ch\|v\|_{L^2(I)}.
\]
The proof is finished.
\end{proof}
\begin{lem}\label{lem-32} For  $\ep >0$ given in Lemma \ref{lem-41}, there exists $h_0 >0$ such that for $h \in (0, h_0 )$ we have the following inequality
\[
j''_h (u) (v,v) \geq \frac{\gamma}{4}\|v\|_{L^2(I)}^2, \quad v \in \U
\] 
for any $u \in \U_{ad}$ satisfying $\|u -\bar{u}\|_{L^2(I)} \leq \ep$. 
\end{lem}
\begin{proof} We first claim that
\bq\label{est_erj}
|j'' (u)(v,v) - j''_h (u) (v,v)| \leq Ch\|v\|_{L^2(I)}^2
\eq
for $h > 0$ small enough, where $C > 0$ is independent of $h$. Let $x(t) = G(u)(t)$, $\lambda (t) = \lambda(u) (t)$, $x_h (t)= G_h (u)(t)$, and $\lambda_h (t) = \lambda_h (u)(t)$. Also we let $y = G' (u)v$ and $y_h = {G_h}' (u)v$.  It follows from Lemmas \ref{lem_2j} and \ref{lem_2jk} that 
$$\begin{aligned}
& j'' (u)(v,v) - j''_h (u) (v,v)
\\
& = -\int_0^{T} \lambda(t)\left(\frac{\partial^2 f}{(\partial x)^2} (t,x,u) y^2(t) + 2 \frac{\partial^2 f}{\partial x \partial u} (t,x,u) y(t)v(t) + \frac{\partial^2 f}{(\partial u)^2} (t,x,u) v^2(t)\right)dt 
\\
&\quad + \int_0^{T} \frac{\partial^2 g}{(\partial x)^2} (t,x,u) y^2 (t) + 2 \frac{\partial^2 g}{\partial x \partial u} (t,x,u) y(t) v(t) + \frac{\partial^2 g}{(\partial u)^2} (t,x,u) v^2 (t)\,dt
\\
&\quad +\int_0^{T}\lambda_h(t) \left( \frac{\partial^2 f}{(\partial x)^2} (t,x_h,u) y_h^2(t) + 2 \frac{\partial^2 f}{\partial x \partial u} (t,x_h,u) y_h(t) v(t) + \frac{\partial^2 f}{(\partial u)^2} (t,x_h,u) v^2(t) \right) \, dt
\\
&\quad - \int_0^{T} \frac{\partial^2 g}{(\partial x)^2} (t,x_h,u) y_h^2 (t) + 2 \frac{\partial^2 g}{\partial x \partial u} (t,x_h,u) y_h (t) v(t) + \frac{\partial^2 g}{(\partial u)^2} (t,x_h,u) v^2 (t)\,dt.
\end{aligned}$$
In order to show \eqref{est_erj}, by using a similar argument as in the proof of Lemma \ref{lem-41}, it suffices to show that there exists $C>0$, independent of $h$, such that
\bq\label{est_er2}
\|x-x_h\|_{L^\infty(I)} \leq Ch, \quad \|y-y_h\|_{L^{\infty}(I)} \leq Ch\|v\|_{L^2(I)}, \quad \|y_h\|_{L^2(I)} \leq C\|v\|_{L^2(I)},
\eq
\bq\label{est_er3}
\|\lambda_h\|_{L^\infty(I)} \leq C, \quad \|\lambda-\lambda_h\|_{L^\infty(I)} \leq Ch,
\eq
and
\[
\int_0^T |y^2(t) - y_h^2(t)|\,dt \leq Ch\|v\|_{L^2(I)}^2.
\]
The first and second inequalites in \eqref{est_er2} hold due to Theorem \ref{thm-21} and Lemma \ref{lem_yku}. For the third one in \eqref{est_er2} is proved in \eqref{eq-d-61}. By Lemma \ref{lem_lamku}, the second inequality in \eqref{est_er3} holds. We also find
\[
\|\lambda_h\|_{L^\infty(I)} \leq \|\lambda-\lambda_h\|_{L^\infty(I)} + \|\lambda\|_{L^\infty(I)} \leq Ch + C \leq C,
\]
which asserts the first inequality in \eqref{est_er3}. Finally, we obtain
$$\begin{aligned}
\int_0^T |y^2(t) - y_h^2(t)|\,dt &\leq \int_0^T |y(t) + y_h(t)||y(t) - y_h(t)|\,dt \cr
&\leq \|y(t) - y_h(t)\|_{L^2(I)}\lt(\|y\|_{L^2(I)} + \|y_h\|_{L^2(I)} \rt)\cr
&\leq Ch\|v\|_{L^2(I)}^2,
\end{aligned}$$
due to \eqref{est_er2}. All of the above estimates enable us to prove the claim \eqref{est_erj}. This together with Lemma \ref{lem-41} yields
$$\begin{aligned}
j''_h(u)(v,v) &\geq  j''(u)(v,v) - |j''_h(u)(v,v) - j''(u)(v,v)| \cr
&\geq \frac\gamma2\|v\|_{L^2(I)}^2 - Ch\|v\|_{L^2(I)}^2 \cr
& \geq \frac\gamma4\|v\|_{L^2(I)}^2
\end{aligned}$$
for $0< h < h_0:= \gamma/(4C)$. The proof is finished.
\end{proof}
%%%%%%%%%%%%%%%%%%%%%%%%%%%%%%%%%%%%%%%%%%%%%%%%%%%%%%%%%%%%%%%%%
%
%
%. \section{Convergence estimate (Case 1: No discretization on the control)}\label{sec-5}
%
%
%%%%%%%%%%%%%%%%%%%%%%%%%%%%%%%%%%%%%%%%%%%%%%%%%%%%%%%%%%%%%%%%%%%%%%

\section{Existence and Convergence results for the semi-discrete case}\label{sec-5}

We first prove the existence of the local solution to the approximate problem \eqref{main_eq4}.  

\begin{proof}[Proof of Theorem \ref{thm-1}] Choose $\ep>0$ as in Theorem \ref{thm-31}.  We consider the following set
\[
\overline{B_\ep (\bar{u})} = \{ u \in \U_{ad} :~\|u -\bar{u}\|_{L^2(I)} \leq 2\ep\},
\]
and recall the space $\mathcal{V}_{R} = \{ u \in \mathcal{U}~:~ V(u) \leq R\}.$ We will find a minimizer of $j_h$ in the space $W_{\ep, R}:=\overline{B_{\ep}(\bar{u})} \cap \mathcal{V}_R$, and then show that $\|\bar{v} - \bar{u} \|_{L^2 (I)} < \ep$. It will imply that $\bar{v}$ is a local solution to \eqref{main_eq4}.

Since $j_h$ is lower bounded on $W_{\ep, R}$, there exists a sequence $\{v_k\}_{k \in \mathbb{N}} \subset \overline{B_{\ep}(\bar{u})} \cap \mathcal{V}_R$ such that
\begin{equation}\label{eq-5-50}
\lim_{k \rightarrow \infty} j_h (v_k) =  \inf_{v \in W_{\ep,R}}  j_h (v).
\end{equation}
Moreover, since $W_{\ep,R}$ is compactly embedded in $L^p(I)$ for any $p \in [1,\infty)$, up to  a subsequence, $\{v_k\}$ converges to a function $\bar{v} \in W_{\ep, R}$ in $L^{2}(I)$ and converges a.e. to $\bar{v}$. By definition, the function $z_k := G_h (v_k) \in X_h^r$ satisfies
 \begin{equation}\label{eq-dg-r}
\sum_{n=1}^N \lt({z_k}'(t) - f(t,{z_k}(t),v_k(t)), \varphi(t)\rt)_{I_n} + \sum_{n=2}^N ([{z_k}]_{n-1}, \varphi^+_{n-1}) + ({z_k}^+_0, \varphi^+_0) = ({z_k}_0, \varphi^+_0)
\end{equation}
for all $\varphi \in X^r_h$.
Note that $\{z_k\}_{k \in \mathbb{N}}$ is a bounded set in the finite dimensional space $X_h^r$ by Theorem \ref{thm-2} (see also Lemma \ref{lem-d-4}). Therefore we can find a subsequence such that $z_k$ converges uniformly to a function $\bar{z} \in X_h^r$. We claim that $\bar{z} = G_h (\bar{v})$. Indeed, since $v_k (t)$ converges a.e. to $\bar{v}(t)$ for $t \in I$ and $f$ is Lipsichtiz continuous, we may take a limit $k$ to infinity in \eqref{eq-dg-r} to deduce
\[
\sum_{n=1}^N \lt({\bar{z}}'(t) - f(t,{\bar{z}}(t),\bar{v}(t)), \varphi(t)\rt)_{I_n} + \sum_{n=2}^N ([\bar{z}]_{n-1}, \varphi^+_{n-1}) + (\bar{z}^+_0, \varphi^+_0) = (\bar{z}_0, \varphi^+_0)
\]
for all $\varphi \in X^r_h$. This yields that $\bar{z} = G_h (\bar{v})$, which enables us to derive
\[
\begin{split}
\lim_{k \rightarrow \infty} j_h (v_k) & = \lim_{k \rightarrow \infty} \int_0^{T} g(t, z_k (t), v_k (t)) dt
\\
&=\int_0^{T} \lim_{k \rightarrow \infty} g(t, z_k (t), v_k (t)) \,dt
\\
& = \int_0^{T} g(t, \bar{z}(t), \bar{v}(t))\,dt \cr
& = \int_0^{T} g(t, G_h (\bar{v})(t), \bar{v}(t)) \,dt \cr
&= j_h (\bar{v}).
\end{split}
\]
This together with \eqref{eq-5-50} implies that $\bar{v} \in W_{\ep,R}$ satisfies 
\[
j_h (\bar{v}) = \inf_{v \in W_{\ep, R}} j_h (v).
\]
It remains to show that the minimizer is achieved in the interior of $B_\ep (\bar{u})$. To show this, we recall that
\begin{equation*}
j(u) = J(u, G(u)) = \int_0^{T} g(t, G(u)(t), u(t)) \,dt,
\end{equation*}
and
\begin{equation*}
j_h (u) = J(u, G_h (u)) = \int_0^{T} g(t, G_h (u)(t), u(t))\, dt.
\end{equation*}
Since $\|G(u) \|_{W^{1,\infty}(I)} \leq C$ for all $u \in \mathcal{U}_{ad}$, we see from Theorem \ref{thm-21} that
\begin{equation*}
\|G_h (u) - G(u) \|_{L^{\infty}(I)} \leq Ch \|G (u) \|_{W^{1,\infty}(I)} \leq Ch. 
\end{equation*}
Combining this with the Lipshitz continuity of $G$ yields that
\begin{equation*}
|j (u) - j_h (u) | \leq Ch \quad \forall\, u \in \mathcal{U}_{ad}.
\end{equation*}
Taking $h_0 = \gamma \ep^2 /(8C)$. Using this and the estimate 
\[
j (u) \geq j(\bar{u}) + \frac{\gamma}{2} \ep^2,\quad \forall\,u \in \mathcal{U}_{ad} \quad \textrm{with} ~\ep \leq \|u -\bar{u}\|_{L^2(I)} \leq 2\ep
\]
from Theorem \ref{thm-31}, it follows that for $h \in (0, h_0)$ we have
\begin{equation}\label{eq-5-54}
j_h (u) \geq j_h (\bar{u}) + \frac{\gamma}{4} \ep^2\quad \forall\,u \in \mathcal{U}_{ad} \quad\textrm{with}~ \ep \leq \|u-\bar{u}\|_{L^2(I)} \leq 2\ep.
\end{equation}
Thus, the minimizer $\bar{v}$ is achieved in $B_\ep (\bar{u})$. It gives that $j_h (u) \geq j_h (\bar{v})$ for all $u \in \mathcal{V}_R$ with $\|u-\bar{v}\|_{L^2} \leq \ep$.
\end{proof}

We now provide the details of the convergence estimate of the approximate solutions.

\begin{proof}[Proof of Theorem \ref{thm-2}]
Analogous to \eqref{eq-11}, the discrete first order necessary optimality condition for $\bar u_h \in \U_{ad}$ reads
\[
j'_h(\bar u_h)( u - \bar u_h) \geq 0  \quad \forall \,u \in B_{\ep} (\bar{u}_h) \cap \mathcal{V}_R.
\]
Inserting here $u=\bar{u}$ and summing it with \eqref{eq-11}, we get
\begin{equation}\label{eq-3-1}
\begin{split}
0 &\leq (j' (\bar{u}) - j_h' (\bar{u}_h)) (\bar{u}_h - \bar{u}) 
\\
& = (j' (\bar{u}) - j_h' (\bar{u})) (\bar{u}_h - \bar{u}) +  (j_h' (\bar{u}) - j_h' (\bar{u}_h)) (\bar{u}_h - \bar{u}).
\end{split}
\end{equation}
Now, by applying the mean value theorem with a value $t \in (0,1)$, we have
\begin{equation}\label{eq-4-1}
\begin{split}
C \|\bar{u}_h - \bar{u}\|_{L^2(I)}^2 & \leq j_h'' (\bar{u} - t (\bar{u}- \bar{u}_h))(\bar{u}_h - \bar{u}, \bar{u}_h - \bar{u}) 
\\
&= (j_h' (\bar{u}_h) - {j_h}' (\bar{u})) (\bar{u}_h - \bar{u}) 
\\
& \leq ( {j}' (\bar{u}) -{j_h}' (\bar{u})) (\bar{u}_h - \bar{u}), 
\end{split}
\end{equation}
where we used Lemma \ref{lem-32} in the first inequality and \eqref{eq-3-1} in the second inequality. For our aim, it only remains to estimate the right hand side. 
 Let us express it using the adjoint states.
From \eqref{eq-1-2}, we have
\begin{equation}\label{eq-5-52}
j'(\bar u)(\bar u_h - \bar u) = \lt(\pa_u g(\cdot,\bar x, \bar u) - \pa_u f(\cdot,\bar x, \bar u)\lambda(\bar u), \bar u_h-\bar u\rt)_I,
\end{equation}
and it follows from \eqref{eqn_jk0} that 
\begin{equation}\label{eq-5-53}
j'_h(\bar u)( \bar{u}_h - \bar u) = (\pa_u g(\cdot,\bar x_h, \bar u) - \pa_u f(\cdot,\bar x_h, \bar u)\lambda_h (\bar u), \bar{u}_h  - \bar u)_I.
\end{equation}
Here we remind that $\bar{x}_h \in X_h^r$ denotes the solution to \eqref{eq-dg} with control $\bar{u}$ and initial data $x_0$. Combining \eqref{eq-5-52} and \eqref{eq-5-53} we find
$$\begin{aligned}
( {j}' (\bar{u}) - j_h' (\bar{u}))(\bar{u}_h -\bar{u})&=\Big(  \partial_u g(\cdot, \bar{x}, \bar{u}) - \partial_u g(\cdot, \bar{x}_h, \bar{u}), ~\bar u_h-\bar u\Big)_I
\\
&\quad - \Big(  \partial_u f(\cdot, \bar{x}, \bar{u}) \lambda (\bar{u}) - \partial_u f (\cdot, \bar{x}_h, \bar{u}) \lambda_h (\bar{u}), ~\bar u_h-\bar u\Big)_I.
\end{aligned}$$
Applying H\"older's inequality here and using \eqref{eq-1-10}, we deduce
\begin{equation}\label{eq-5-70}
\begin{split}
&( j' (\bar{u}) -j_h' (\bar{u}))(\bar{u}_h -\bar{u})\cr
&\quad \leq \|\partial_u \partial_x g\|_{L^{\infty}} \|\bar{x}-\bar{x}_h \|_{L^2(I)} \|\bar{u}_h - \bar{u}\|_{L^2(I)}\\
&\qquad + \|\lambda (\bar{u})\|_{L^{\infty}(I)}\|\partial_u f (\cdot, \bar{x}, \bar{u}) - \partial_u f (\cdot, \bar{x}_h, \bar{u}) \|_{L^2(I)} \|\bar{u}_h - \bar{u}\|_{L^2(I)}\\
&\qquad + \|\partial_u f(\cdot, \bar{x}_h, \bar{u})\|_{L^{\infty}} \|\lambda (\bar{u}) - {\lambda_{h}} (\bar{u})\|_{L^2(I)} \|\bar{u}_h - \bar{u}\|_{L^2(I)}\cr
&\quad \leq C\lt(\|\bar{x}-\bar{x}_h \|_{L^2(I)} +\|\lambda (\bar{u}) -{\lambda_{h}}(\bar{u})\|_{L^2(I)}\rt)\|\bar{u}_h - \bar{u}\|_{L^2(I)}.
\end{split}
\end{equation}
Now we apply \eqref{eq-3-3} and \eqref{eq-3-2} to get
\begin{equation}\label{eq-5-10}
( {j}' (\bar{u}) -{j_h}' (\bar{u}))(\bar{u}_h -\bar{u})\leq C h^{\min\{k_1, k_2, r+1\}} \|\bar{u}_h -\bar{u}\|_{L^2(I)}.
\end{equation}
Combining this with \eqref{eq-4-1}, we finally obtain
\[
\|\bar{u}_h - \bar{u}\|_{L^2(I)} \leq C h^{\min\{k_1, k_2, r+1\}}.
\]
This completes the proof.
\end{proof}

%%%%%%%%%%%%%%%%%%%%%%%%%%%%%%%%%%%%%%%%%%%%%%%%%%%%%%%%%%%%%%%%%
%
%
% \section{Convergence estimate (Case 2: Discretization on the control)}
%
%
%%%%%%%%%%%%%%%%%%%%%%%%%%%%%%%%%%%%%%%%%%%%%%%%%%%%%%%%%%%%%%%%%%%%%%

\section{Existence and Convergence results for the fully discrete case}\label{sec-full}
This section is devoted to the existence and convergence results for the fully discrete case. We consider a finite dimensional space $\U_h$ which discretizes the control space $\U_{ad}$, for example, the space of step functions
\[
\U_h = \{ u \in \U_{ad} \mid ~u: \textrm{piecewise constant on}~ I_k = [t_{k-1},t_k]\},
\]
or the high-order DG space $\mathcal{U}_h = X_h^r$ with $r \in \mathbb{N}$. 
\

We say that $\bar{u}_h \in \mathcal{U}_h$ is a local solution to 
\begin{equation}\label{eq-r-31}
\min_{u \in U_h} j_h (u)
\end{equation}
if there is a value $\ep >0$ such that $j_h (u) \geq j_h (\bar{u}_h)$ for all $u \in \mathcal{U}_h$ with $\|u- \bar{u}_h \|_{L^2} \leq \ep$.
\

\

We provide the existence result of local solution in the following theorem.
\begin{thm}\label{thm-6-1} Choose $\ep>0$ as in Theorem \ref{thm-31}. Let $\bar{u} \in \mathcal{U}_{ad}$ be a local solution of \eqref{main_eq3} satisfying Assumption 1. Fix any $\ep >0$. Then there exists $h_0 >0$ such that for $h \in (0, h_0 )$ problem \eqref{eq-r-31} has a local solution $\bar{u}_h \in \mathcal{U}_h$ such that $\|\bar{u}- \bar{u}_h\|_{L^2} \leq \ep$.
\end{thm}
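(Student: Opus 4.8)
The plan is to mimic the proof of Theorem \ref{thm-1} in the finite–dimensional setting, the single new ingredient being an approximation operator $\Pi_h : \U_{ad} \to \U_h$ playing the role that $\bar{u}$ itself played there (since in general $\bar{u} \notin \U_h$). For the two model choices of $\U_h$ — piecewise constants or $X_h^r \subset \U_{ad}$ — one takes $\Pi_h$ to be the cellwise $L^2$–projection, composed with a pointwise truncation onto $[u_\ell,u_u]$ in the higher–order case; such a $\Pi_h$ satisfies $\Pi_h(\U_{ad}) \subset \U_{ad}$ and $\|\Pi_h v - v\|_{L^2(I)} \to 0$ as $h \to 0$ for every fixed $v \in L^2(I;\R^m)$. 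Fix $\ep>0$ as in Theorem \ref{thm-31} and set $K_h := \{u \in \U_h : \|u - \bar{u}\|_{L^2(I)} \le 2\ep\}$. Since $G_h$ is well defined and Lipschitz on $\U$ for $h$ small (Theorem \ref{thm-21}), the functional $j_h$ is continuous on the finite–dimensional space $\U_h$; as $K_h$ is closed and bounded, hence compact, $j_h$ attains its minimum over $K_h$ at some $\bar{v}_h$.

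The second step is to show that the minimizer lies strictly inside, $\|\bar{v}_h - \bar{u}\|_{L^2(I)} < \ep$; granting this, $\bar{u}_h := \bar{v}_h$ is a local solution of \eqref{eq-r-31}, because any $u \in \U_h$ with $\|u - \bar{v}_h\|_{L^2(I)} \le \ep$ then lies in $K_h$, whence $j_h(u) \ge j_h(\bar{v}_h)$. Exactly as in the proof of Theorem \ref{thm-1}, the bound $\|G_h(u) - G(u)\|_{L^\infty(I)} \le Ch$ from Theorem \ref{thm-21} together with the Lipschitz continuity of $G$ gives $|j(u) - j_h(u)| \le Ch$ for all $u \in \U_{ad}$, and combining this with the quadratic growth of Theorem \ref{thm-31} yields, for $h$ small,
\begin{equation*}
j_h(u) \ge j(\bar{u}) + \frac{\gamma}{4}\ep^2 \quad \text{whenever } u \in \U_{ad},\ \ep \le \|u - \bar{u}\|_{L^2(I)} \le 2\ep .
\end{equation*}
On the other hand $\Pi_h\bar{u} \in K_h$ for $h$ small (as $\|\Pi_h\bar{u} - \bar{u}\|_{L^2(I)} \to 0$), and by $|j - j_h| \le Ch$ once more together with the (global) Lipschitz continuity of $j$ on $\U_{ad}$, which follows from Lemma \ref{lem_e1} and the bound \eqref{eq-1-10} on $g$,
\begin{equation*}
j_h(\Pi_h\bar{u}) \le j(\Pi_h\bar{u}) + Ch \le j(\bar{u}) + C\|\Pi_h\bar{u} - \bar{u}\|_{L^2(I)} + Ch < j(\bar{u}) + \frac{\gamma}{4}\ep^2
\end{equation*}
once $h$ is small enough. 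Hence $\min_{K_h} j_h \le j_h(\Pi_h\bar{u}) < j(\bar{u}) + \frac{\gamma}{4}\ep^2$, which by the previous display rules out $\ep \le \|\bar{v}_h - \bar{u}\|_{L^2(I)} \le 2\ep$, so indeed $\|\bar{v}_h - \bar{u}\|_{L^2(I)} < \ep$.

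The hard part is really just the construction of $\Pi_h$: one needs it to preserve the box constraints $u_\ell \le \Pi_h u \le u_u$ (so that the comparison control $\Pi_h\bar{u}$ is feasible and the estimate $|j - j_h|\le Ch$ applies to it), while still converging in $L^2$ to the possibly discontinuous optimal control $\bar{u}$. For piecewise constants the cell–average projection achieves both at once; for $X_h^r$ one must truncate the piecewise–polynomial $L^2$–projection back into $[u_\ell,u_u]$, and the point to verify is that this truncation does not spoil the convergence — it does not, since the pointwise projection onto $[u_\ell,u_u]$ is nonexpansive in $L^2$. Apart from this, the argument is the one of Theorem \ref{thm-1}, with the simplification that no total–variation compactness is needed because the minimization runs over the finite–dimensional $\U_h$, which is precisely why the set $\mathcal{V}_R$ does not appear in \eqref{eq-r-31}.
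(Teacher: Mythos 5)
Your proof is correct and follows the same overall strategy as the paper's: minimize $j_h$ over the compact set $\{u\in\U_h:\|u-\bar u\|_{L^2(I)}\le 2\ep\}$, use the uniform bound $|j-j_h|\le Ch$ together with the quadratic growth of Theorem \ref{thm-31} to exclude the annulus $\ep\le\|u-\bar u\|_{L^2(I)}\le 2\ep$, and conclude that the minimizer is interior and hence a local solution of \eqref{eq-r-31}. The one place where you genuinely go beyond the paper is the comparison control: the paper's proof invokes ``the minimality of $\bar u_h$ for $j_h$ in $\overline{B_{2\ep}(\bar u)}$'' against the lower bound $j_h(\bar u)+\frac{\gamma}{4}\ep^2$, which tacitly requires a feasible competitor in $\U_h$ whose $j_h$-value beats that threshold --- and $\bar u$ itself need not belong to $\U_h$. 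Your construction of $\Pi_h:\U_{ad}\to\U_h\cap\U_{ad}$ (cell averages for piecewise constants; truncated $L^2$-projection for $X_h^r$, with the nonexpansiveness of the pointwise truncation guaranteeing $\|\Pi_h\bar u-\bar u\|_{L^2(I)}\to 0$) supplies exactly this competitor, via $j_h(\Pi_h\bar u)\le j(\bar u)+C\|\Pi_h\bar u-\bar u\|_{L^2(I)}+Ch$. This fills a step the paper leaves implicit (the paper only introduces a projection operator $P_h$, as a hypothesis, in the subsequent convergence theorem), at the cost of making the result depend on the availability of such a $\Pi_h$ for the chosen $\U_h$ --- which holds for both model choices discussed in Section \ref{sec-full}.
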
 

\begin{proof} 
By compactness and continuity, $j_h$ has a minimizer in 
\[
\overline{B_{2\ep} (\bar{u})} = \{ u \in \U_{h} :~\|u -\bar{u}\|_{L^2(I)} \leq 2\ep\},
\]
since $\mathcal{U}_h$ is finite dimensional.
Next we aim to show that the minimizer $\bar{u}_h$ satisfies 
\[
\|\bar{u}_h - \bar{u}\|_{L^2 (I)} \leq \ep.
\]
 To show this, we recall from \eqref{eq-5-54} that there is a value $h_0 >0$ such that for $h \in (0, h_0)$ we have
\begin{equation*}
j_h (u) \geq j_h (\bar{u}) + \frac{\gamma}{4} \ep^2, \quad \forall\,u \in \mathcal{U}_{ad} \quad\textrm{with}\quad \ep \leq \|u-\bar{u}\|_{L^2(I)} \leq 2\ep.
\end{equation*}
Combining this with the minimality of $\bar{u}_h$ for $j_h$ in $\overline{B_{2\ep}(\bar{u})}$, we find that $\|\bar{u}_h - \bar{u}\|_{L^2 (I)} \leq \ep$.  It then yields that
\begin{equation*}
j_h (u) \geq j_h (\bar{u}_h), \quad \forall\,u \in \mathcal{U}_h \quad \textrm{with}\quad \|u-\bar{u}_h\|_{L^2} \leq \ep.
\end{equation*}
Thus $\bar{u}_h$ is a local solution of \eqref{eq-r-31}.
\end{proof}

We establish the convergence result in the following theorem.
\begin{thm}Assume the same statements for $\bar{u}\in \mathcal{U}_{ad}$ and $\lambda (\bar{u})$ in Theorem \ref{thm-2}. In addition, suppose that there exists a projection operator $P_h : \mathcal{U} \rightarrow \mathcal{U}_h$ and a value $a >0$ such that 
\[
\|P_h \bar{u} - \bar{u}\|_{L^2 (I)} = O(h^a) \quad \mbox{for} \quad h \in (0,1).
\]
 Let $\bar{u}_h \in \mathcal{U}_h$ be a local solution to \eqref{eq-r-31} constructed in Theorem  \ref{thm-6-1}. Then the following estimate holds:
\[
\|\bar{u}_h - \bar{u}\|_{L^2 (I)} =O (h^{\min \{ r+1, k_1, k_2, a/2}\}).
\]
If we further assume that $j' (\bar{u}) =0$, then the above estimate can be improved to
\[
\|\bar{u}_h - \bar{u}\|_{L^2 (I)} = O (h^{\min \{ r+1, k_1, k_2, a}\}).
\]
\end{thm}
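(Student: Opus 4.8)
The plan is to follow essentially the same scheme as the proof of Theorem \ref{thm-2}, but to carefully track the new error coming from the restriction to the finite-dimensional control space $\mathcal{U}_h$. First I would write down the discrete first order optimality condition for $\bar{u}_h$ as a local minimizer of $j_h$ over $\mathcal{U}_h$: since $\bar{u}_h$ is an interior local minimizer (by Theorem \ref{thm-6-1}, $\|\bar{u}_h-\bar{u}\|_{L^2(I)}\le\ep$ with room to spare), we have $j_h'(\bar{u}_h)(w-\bar{u}_h)\ge 0$ for all $w\in\mathcal{U}_h$ in a neighborhood. The natural test function is $w=P_h\bar{u}$, which is admissible once $h$ is small enough since $\|P_h\bar{u}-\bar{u}\|_{L^2(I)}=O(h^a)$. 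Pairing this with the continuous optimality condition $j'(\bar{u})(u-\bar{u})\ge 0$ evaluated at a suitable point and using Lemma \ref{lem-32} (the uniform coercivity $j_h''(u)(v,v)\ge\frac{\gamma}{4}\|v\|_{L^2(I)}^2$ near $\bar{u}$), I would obtain, by the mean value theorem applied to $j_h'$ along the segment from $\bar{u}_h$ to $P_h\bar{u}$ (or to $\bar{u}$),
\begin{equation*}
\frac{\gamma}{4}\|\bar{u}_h-P_h\bar{u}\|_{L^2(I)}^2 \le \big(j_h'(\bar{u}_h)-j_h'(P_h\bar{u})\big)(\bar{u}_h-P_h\bar{u}) \le \big(j_h'(\bar{u})-j_h'(P_h\bar{u})\big)(\bar{u}_h-P_h\bar{u}) + \big(j'(\bar{u})-j_h'(\bar{u})\big)(\bar{u}_h-P_h\bar{u}) + j'(\bar{u})(P_h\bar{u}-\bar{u}_h).
\end{equation*}

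Next I would estimate the three terms on the right. The middle term is exactly what was bounded in \eqref{eq-5-70}--\eqref{eq-5-10} in the proof of Theorem \ref{thm-2}, giving $O(h^{\min\{r+1,k_1,k_2\}})\|\bar{u}_h-P_h\bar{u}\|_{L^2(I)}$. The first term is Lipschitz in its first slot: by the Lipschitz estimate for $j_h'$ (which follows from the Lipschitz estimates on $G_h$, $\lambda_h$ in the earlier lemmas, exactly as for $j'$), it is bounded by $C\|\bar{u}-P_h\bar{u}\|_{L^2(I)}\|\bar{u}_h-P_h\bar{u}\|_{L^2(I)}=O(h^a)\|\bar{u}_h-P_h\bar{u}\|_{L^2(I)}$. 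The last term, $j'(\bar{u})(P_h\bar{u}-\bar{u}_h)$, is the delicate one: in general one only has $|j'(\bar{u})(P_h\bar{u}-\bar{u}_h)|\le\|j'(\bar{u})\|_{L^2(I)}\|P_h\bar{u}-\bar{u}_h\|_{L^2(I)}$, but this does not decay; instead I would use that $j'(\bar{u})(\bar{u}-\bar{u}_h)\le 0$ (rearranged first order condition, valid since $\bar{u}_h\in\mathcal{U}_{ad}$) to write $j'(\bar{u})(P_h\bar{u}-\bar{u}_h)\le j'(\bar{u})(P_h\bar{u}-\bar{u})\le\|j'(\bar{u})\|_{L^2(I)}\|P_h\bar{u}-\bar{u}\|_{L^2(I)}=O(h^a)$. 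Collecting everything and writing $\delta:=\min\{r+1,k_1,k_2\}$, with $e:=\|\bar{u}_h-P_h\bar{u}\|_{L^2(I)}$, I get $\frac{\gamma}{4}e^2\le C(h^\delta+h^a)e + Ch^a$, and Young's inequality ($Ch^\delta e\le\frac{\gamma}{8}e^2+Ch^{2\delta}$, similarly for the $h^a e$ term) yields $e^2=O(h^{2\delta}+h^a)$, i.e. $e=O(h^{\min\{\delta,a/2\}})$. Then the triangle inequality $\|\bar{u}_h-\bar{u}\|_{L^2(I)}\le e+\|P_h\bar{u}-\bar{u}\|_{L^2(I)}=O(h^{\min\{\delta,a/2\}}+h^a)=O(h^{\min\{r+1,k_1,k_2,a/2\}})$ gives the first claim.

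For the improved estimate under the extra hypothesis $j'(\bar{u})=0$, the same computation applies, but now the term $j'(\bar{u})(P_h\bar{u}-\bar{u}_h)$ vanishes identically, so the bound becomes $\frac{\gamma}{4}e^2\le C(h^\delta+h^a)e$, hence $e=O(h^{\min\{\delta,a\}})$, and combining with $\|P_h\bar{u}-\bar{u}\|_{L^2(I)}=O(h^a)$ gives $\|\bar{u}_h-\bar{u}\|_{L^2(I)}=O(h^{\min\{r+1,k_1,k_2,a\}})$. The main obstacle in the argument is the handling of the projection-error term $j'(\bar{u})(P_h\bar{u}-\bar{u}_h)$: the naive bound loses the convergence rate entirely, and one genuinely needs to exploit the sign of the first order condition (that $j'(\bar{u})$ annihilates feasible variations pointing into $\mathcal{U}_{ad}$) to transfer the estimate onto $\|P_h\bar{u}-\bar{u}\|_{L^2(I)}$ rather than $\|P_h\bar{u}-\bar{u}_h\|_{L^2(I)}$; this is precisely the place where the $a/2$ (rather than $a$) appears and where the extra assumption $j'(\bar{u})=0$ removes the loss. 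A secondary technical point is checking that $P_h\bar{u}$ actually lies in the admissible set $\mathcal{U}_{ad}$ (for the step-function or DG choices of $\mathcal{U}_h$ with $L^2$ projection this holds because the projection of a function with values in the box $[u_\ell,u_u]$ again takes values in the box, at least for the natural interpolation/averaging projections) and that $\|\bar{u}_h-P_h\bar{u}\|_{L^2(I)}$ stays small enough to remain in the coercivity neighborhood of Lemma \ref{lem-32}, which follows once $h_0$ is chosen small.
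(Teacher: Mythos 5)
Your argument is correct and follows essentially the same route as the paper: two first-order optimality conditions combined with the uniform coercivity of $j_h''$ (Lemma \ref{lem-32}) and the consistency estimate \eqref{eq-5-10}, with the projection residual of size $O(h^a)$ entering unpaired with the error and therefore producing the exponent $a/2$ after Young's inequality, and disappearing (or gaining a factor of the error) when $j'(\bar{u})=0$. The only cosmetic difference is that you apply coercivity to $\bar{u}_h - P_h\bar{u}$ and control the residual $j'(\bar{u})(P_h\bar{u}-\bar{u}_h)$ via the continuous optimality condition, whereas the paper applies coercivity to $\bar{u}_h-\bar{u}$ and bounds $R_h = j_h'(\bar{u}_h)(P_h\bar{u}-\bar{u})$ directly by the boundedness of $j_h'(\bar{u}_h)$ (and, for the improved rate, by writing $j_h'(\bar{u}_h) = (j_h'(\bar{u}_h)-j_h'(\bar{u}))+(j_h'(\bar{u})-j'(\bar{u}))$); both yield the same rates.
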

\begin{proof}

In this case, by the first optimality conditions on $\bar{u}$ and $\bar{u}_{h}$, we have
\[
j' (\bar{u}) (\bar{u}_h - \bar{u}) \geq 0 \quad \mbox{and} \quad {j_h}' (\bar{u}_h) (P_h \bar{u} - \bar{u}_h) \geq 0.
\]
The latter condition can be written as
$$\begin{aligned}
0 &\leq j_h' (\bar{u}_h) (\bar{u} - \bar{u}_h) + j_h' (\bar{u}_h) (P_h \bar{u} - \bar{u})= j_h' (\bar{u}_h) (\bar{u} - \bar{u}_h) +R_h,
\end{aligned}$$
where $R_h := {j_h}' (\bar{u}_h) (P_h \bar{u} - \bar{u})$.
Summing up the above two inequalities, we get
$$\begin{aligned}
0 &\leq (j' (\bar{u}) - {j_h}' (\bar{u}_h)) (\bar{u}_h - \bar{u})+R_h\\
& = (j' (\bar{u}) - j_h' (\bar{u})) (\bar{u}_h - \bar{u}) +  (j_h' (\bar{u}) - j_h' (\bar{u}_h)) (\bar{u}_h - \bar{u})+R_h,
\end{aligned}$$
i.e.,
\bq\label{est_jkk}
(j_h' (\bar{u}_h) - j_h' (\bar{u})) (\bar{u}_h - \bar{u}) \leq ( j' (\bar{u}) -j_h' (\bar{u})) (\bar{u}_h - \bar{u})+R_h.
\eq
By the assumption of the theorem, we have
\begin{equation}\label{eq-5-11}
\|R_h \|_{L^2 (I)} = O (h^a).
\end{equation}
On the other hand, by applying the mean value theorem and Lemma \ref{lem-32}, we obtain
\[
(j_h' (\bar{u}_h) - j_h' (\bar{u})) (\bar{u}_h - \bar{u}) = j_h'' (\bar{u} + t (\bar{u}- \bar{u}_h))(\bar{u}_h - \bar{u}, \bar{u}_h - \bar{u}) \geq C\|\bar{u}_h - \bar{u}\|_{L^2(I)}^2.
\]
Combining this with \eqref{est_jkk} yields
\[
\|\bar{u}_h - \bar{u}\|_{L^2(I)}^2 \leq C( j' (\bar{u}) -j_h' (\bar{u})) (\bar{u}_h - \bar{u}) + CR_h.
\]
Applying here the estimate \eqref{eq-5-10} in the previous proof, we have
\begin{equation}\label{eq-5-12}
\|\bar{u}_h - \bar{u}\|_{L^2(I)}^2 \leq C h^{\min\{k_1, k_2, r+1\}} \|\bar{u}_h -\bar{u}\|_{L^2(I)} + CR_h,
\end{equation}
which together with \eqref{eq-5-11} gives the desired estimate
\[
\|\bar{u}_h - \bar{u}\|_{L^2 (I)} =O (h^{\min \{ r+1, k_1, k_2, a/2}\}).
\]
When we further assume  $j' (\bar{u}) =0$, we have
\[
{j_h}' (\bar{u}_h) = ({j_h}' (\bar{u}_h) - {j_h}' (\bar{u})) +({j_h}' (\bar{u}) -j' (\bar{u})).
\]
Using this and the estimates in \eqref{eq-5-70}, we find
\[
\begin{split}
|R_h| = | {j_h}' (\bar{u}_h)  (P_h \bar{u} - \bar{u})| &\leq C\lt( \|\bar{u}_h -\bar{u}\|_{L^2 (I)} +  h^{\min\{k_1, k_2, r+1\}}\rt) \| P_h \bar{u} - \bar{u}\|_{L^2(I)}
\\
&\leq  C h^a \lt( \|\bar{u}_h -\bar{u}\|_{L^2 (I)} +  h^{\min\{k_1, k_2, r+1\}}\rt).
\end{split} 
\]
Inserting this into \eqref{eq-5-12} we obtain
\[
\begin{split}
\|\bar{u}_h - \bar{u}\|_{L^2(I)}^2 &\leq C h^{\min\{k_1, k_2, r+1\}} \|\bar{u}_h -\bar{u}\|_{L^2(I)}
\\
&\quad +  Ch^a ( \|\bar{u}_h -\bar{u}\|_{L^2 (I)} +  h^{\min\{k_1, k_2, r+1\}}).
\end{split}
\]
It gives the desired estimate
\[
\|\bar{u}_h - \bar{u}\|_{L^2 (I)} =O (h^{\min \{ r+1, k_1, k_2, a}\}).
\]
This concludes the desired result.
\end{proof}
%%%%%%%%%%%%%%%%%%%%%%%%%%%%%%%%%%%%%%%%%%%%%%%%%%%%%%%%%%%%%%%%%
%
%
% \section{Numerical experiments}
%
%
%%%%%%%%%%%%%%%%%%%%%%%%%%%%%%%%%%%%%%%%%%%%%%%%%%%%%%%%%%%%%%%%%%%%%%
\section{Numerical experiments}\label{sec-6}

In this section, we present several numerical experiments which validate our theoretical results. We employed the forward-backward DG methods \cite{DHT81} to solve the examples of the OCPs.

\subsection{Linear problem}

Let us consider the following simple one dimensional OCP, which has been used as an example \cite{VV88}, that consists of maximizing the functional
\[
J = \frac12\int_0^1 x^2(t) + u^2(t)\,dt
\]
subject to the state equation
\bq\label{lin_ex1}
x'(t) = -x(t) + u(t), \quad x(0) = 1,
\eq
and $\mathcal{U}= L^2 ([0,1])$. Using a similar idea as in Section \ref{sec-3} based on the maximum principle, we can derive the adjoint equation to the above optimal control problem:
\[
\lambda'(t) = \lambda(t) - x(t), \quad \lambda(1) = 0.
\]
Furthermore, we also find that the optimal solutions $\bar u = - \lambda$ and $\bar x$ satisfies \eqref{lin_ex1}. Thus we have the solution
\[
\bar x(t) = \frac{\sqrt2 \cosh(\sqrt2(t-1))-\sinh(\sqrt2(t-1))}{\sqrt2 \cosh(\sqrt2) + \sinh(\sqrt2)}
\]
and
\[
\bar u(t) = \frac{\sinh(\sqrt2(t-1)}{\sqrt2 \cosh(\sqrt2) + \sinh(\sqrt2)}.
\]
For fixed $r \in \mathbb{N}$, we use $X_h^r$ for the approximate space of $\mathcal{U}$. 
In Table \ref{tb1}, we report the discrete $L^2$ error between optimal solutions and its approximations for the above optimal control problem. Here $r+1$ is the number of grid points on each time interval $I_n$, and we used the equidistant points for our numerical computations. The numerical result confirms that the error is of order $h^{r+1}$ as proved in Theorem \ref{thm-2}.
\begin{table}[htp]
\caption{Discrete $L^2$ error: $\|\bar x - \bar x_h\|_{L^2(I)}$ and $\|\bar u - \bar u_h\|_{L^2(I)}$}
\begin{center}
\begin{tabular}{c|ccccc}
 \mbox{\quad} $$  \mbox{\quad}& \mbox{\quad} $h$  \mbox{\quad}&  \mbox{\quad} $\|\bar x - \bar x_h\|_{L^2(I)}$  \mbox{\quad}&  \mbox{\quad} $\|\bar u - \bar u_h\|_{L^2(I)}$ \mbox{\quad}&$\log_2 \frac{\|\bar{x}-\bar{x}_{2h}\|}{\|\bar{x}-\bar{x}_{h}\|}$\mbox{\quad}&$\log_2 \frac{\|\bar{u}-\bar{u}_{2h}\|}{\|\bar{u}-\bar{u}_{h}\|}$\\[.2mm]
\hline
 & $(0.1) \times 2^{0}$             & 1.9455e-03    & 6.2543e-04 &   &\\
 & $(0.1) \times 2^{-1}$             & 4.8861e-04  &  1.6088e-04  &2.00&1.96  \\
 & $(0.1) \times 2^{-2}$             & 1.2240e-04   & 4.0780e-05 &2.00   &1.98\\
$r=1$  & $(0.1) \times 2^{-3}$             &   3.0629e-05 &1.0264e-05 &  2.00  & 1.99 \\
 & $(0.1) \times 2^{-4}$             &7.6607e-06  & 2.5748e-06 &   2.00 &2.00 \\
 & $(0.1) \times 2^{-5}$             &  1.9156e-06    &  6.4477e-07& 2.00  &2.00 \\
\hline
 & $(0.1) \times 2^{0}$             & 2.6708e-05    &  1.3269e-05 &   &\\
 & $(0.1) \times 2^{-1}$             &  3.3523e-06      &  1.6837e-06 & 2.99  &2.98\\
 & $(0.1) \times 2^{-2}$             & 4.1979e-07   &  2.1202e-07 &  3.00  &2.99\\
$r=2$  & $(0.1) \times 2^{-3}$             & 5.2518e-08    &2.6599e-08  &  3.00  &3.00\\
 & $(0.1) \times 2^{-4}$             &  6.5673e-09  & 3.3308e-09  &  3.00  &3.00 \\
 & $(0.1) \times 2^{-5}$             & 8.2108e-10    & 4.1672e-10& 3.00   & 3.00 \\
\hline 
 & $(0.1) \times 2^{0}$             &  2.8964e-07     & 9.5564e-08 &   &\\
 & $(0.1) \times 2^{-1}$             &  1.8172e-08    & 6.0617e-09  & 4.00   &  3.98\\
 & $(0.1) \times 2^{-2}$             & 1.1377e-09    &  3.8151e-10 &   4.00 & 3.99 \\
$r=3$  & $(0.1) \times 2^{-3}$             &7.1152e-11    &  2.3918e-11   & 4.00   & 4.00 \\
 & $(0.1) \times 2^{-4}$             &  4.4370e-12 & 1.4871e-12&   4.00 & 4.01 \\
 & $(0.1) \times 2^{-5}$             &  2.7555e-13   &  8.4657e-14  &   4.01 & 4.13 \\
\end{tabular}
\end{center}
\label{tb1}
\end{table}

\subsection{Nonlinear problem}

In this part, we consider the following nonlinear optimal control problem:
\[
J = \frac12\int_0^{1/5} x^2(t) + u^2(t)\,dt
\]
subject to the state equation
\bq\label{lin_ex1}
x'(t) = x^2(t) + u(t), \quad x(0) = 2.
\eq
In this case, the corresponding adjoint equation and optimal control are given as follows.
\[
\lambda'(t) = - x(t)(1 + \lambda(t)) \quad \mbox{and} \quad \bar u(t) = - \lambda(t),
\]
and thus the optimal solution $\bar x$ solves
\[
x'(t) = x^2(t) -\lambda(t), \quad x(0) = 2.
\]
In this case, since we have no explicit form of the actual solutions, we take the reference solutions $\bar{x}_h$ (resp., $\bar{u}_h$) with $h= (0.1) \times 2^{-9}$ instead of $\bar{x}$ (resp., $\bar{u}$). In Table \ref{tb3}, we arrange the discrete $L^2$ error between reference solutions and its approximations.
\begin{table}[htp]
\caption{Discrete $L^2$ error: $\|\bar x - \bar x_h\|_{L^2(I)}$ and $\|\bar u - \bar u_h\|_{L^2(I)}$}
\begin{center}
\begin{tabular}{c|ccccc}
 \mbox{\quad} $$  \mbox{\quad}& \mbox{\quad} $h$  \mbox{\quad}&  \mbox{\quad} $\|\bar x - \bar x_h\|_{L^2(I)}$  \mbox{\quad}&  \mbox{\quad} $\|\bar u - \bar u_h\|_{L^2(I)}$ \mbox{\quad}&$\log_2 \frac{\|\bar{x}-\bar{x}_{2h}\|}{\|\bar{x}-\bar{x}_{h}\|}$\mbox{\quad}&$\log_2 \frac{\|\bar{u}-\bar{u}_{2h}\|}{\|\bar{u}-\bar{u}_{h}\|}$\\[.2mm]
\hline
 & $0.1$                              & 1.3006e-02    &2.6587e-03 &      &        \\
 & $(0.1) \times 2^{-1}$             & 4.5715e-03     &  6.8872e-04  &  1.51    &       1.95 \\
 & $(0.1) \times 2^{-2}$             &1.3286e-03   & 1.7024e-04 &     1.78 &    2.02    \\
$r=1$  & $(0.1) \times 2^{-3}$             & 3.5677e-04    & 4.2187e-05 &   1.90  & 2.01       \\
 & $(0.1) \times 2^{-4}$             & 9.2305e-05   & 1.0492e-05 &    1.95  &      2.01 \\
 & $(0.1) \times 2^{-5}$             &   2.3420e-05  & 2.6101e-06&     1.98 &     2.01  \\
\hline
 & $0.1$                              & 7.9288e-04    & 7.1751e-05  &      &        \\
 & $(0.1) \times 2^{-1}$             & 1.6928e-04    & 6.8412e-06 &  2.23    &     3.40   \\
 & $(0.1) \times 2^{-2}$             &  2.7566e-05     &7.2059e-07 &    2.62  &      3.25  \\
$r=2$  & $(0.1) \times 2^{-3}$             &3.9391e-06    &8.4373e-08&  2.81    &    3.10    \\
 & $(0.1) \times 2^{-4}$             &  5.2676e-07   & 1.0332e-08  &     2.90 &     3.03   \\
 & $(0.1) \times 2^{-5}$             &  6.8107e-08  & 1.2833e-09 &   2.95   &     3.01   \\
\hline 
 & $0.1$                              & 4.8978e-05    & 2.3326e-06 &      &        \\
 & $(0.1) \times 2^{-1}$             &  5.8217e-06    &2.0158e-07 &  3.07    &   3.53     \\
 & $(0.1) \times 2^{-2}$             & 5.0236e-07  &    1.3655e-08&   3.53   &      3.88  \\
$r=3$  & $(0.1) \times 2^{-3}$             & 3.6929e-08   &8.7619e-10&  3.77    & 3.96       \\
 & $(0.1) \times 2^{-4}$             &   2.5037e-09  & 5.5551e-11&    3.88  &      3.98  \\
 & $(0.1) \times 2^{-5}$             & 1.6329e-10   & 3.6858e-12  &    3.94  &      3.91  \\
\end{tabular}
\end{center}
\label{tb3}
\end{table}

%%%%%%%%%%%%%%%%%%%%%%%%%%%%%%%%%%%%%%%%%%%%%%%%%%%%%%%%%%%%%%%%%%%%%%%%%%%%%%%%%%%%%%%%%%%%%%%%%%%%%%%%%%%%%%%%%%%%%%%%%%%%%%%%%%%%%%%%%%%%%%
%
%
%    Acknowledgments
%
%
%%%%%%%%%%%%%%%%%%%%%%%%%%%%%%%%%%%%%%%%%%%%%%%%%%%%%%%%%%%%%%%%%%%%%%%%%%%%%%%%%%%%%%%%%%%%%%%%%%%%%%%%%%%%%%%%%%%%%%%%%%%%%%%%%%%%%%%%%%%%%%%%

\section*{Acknowledgments}
\noindent \textbf{Acknowledgments}
The work of W. Choi is supported by NRF grant (No. 2017R1C1B5076348) and Faculty Research Fund, Sungkyunkwan University. The work of Y.-P. Choi is supported by NRF grant (No. 2017R1C1B2012918) and Yonsei University Research Fund of 2019-22-021.

%%%%%%%%%%%%%%%%%%%%%%%%%%%%%%%%%%%%%%%%%%%%%%%%%%%%%%%%%%%%%
%
%
% \appendix
%
%%%%%%%%%%%%%%%%%%%%%%%%%%%%%%%%%%%%%%%%%%%%%%%%%%%%%%%%%%%%%%%%%

\appendix

%%%%%%%%%%%%%%%%%%%%%%%%%%%%%%%%%%%%%%%%%%%%%%%%%%%%%%%%%%%%%
%
%
% \section{Derivation of the discrete adjoint equation \eqref{eq-2-1}}\appendix
%
%%%%%%%%%%%%%%%%%%%%%%%%%%%%%%%%%%%%%%%%%%%%%%%%%%%%%%%%%%%%%%%%%

\section{Differentiability of the control-to-state mapping}\label{app_d}

In this section, we show that the control-tostate mapping $G$ is twice differentiable, and obtain the derivatives.  
\begin{lem}\label{lem-d-1}Let $x^s = G(u+sv)$ and $y:[0,T] \rightarrow \mathbb{R}^d$ be the solution of 
\begin{equation*}
y' (t)= \frac{\partial f}{\partial x} (t, x(t), u(t))y(t) + \frac{\partial f}{\partial u} (t, x(t), u(t)) v(t),~t \in (0,T),\quad y(0) =0.
\end{equation*}
Then we have  
\[
\frac{d}{ds} G(u+sv)|_{s=0} = y.
\]
\end{lem}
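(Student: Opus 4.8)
The plan is to compute the directional derivative directly from the difference quotient and show that it converges to $y$ in $L^\infty(I;\R^d)$. Fix $v\in\mathcal{U}$ and, for $0<|s|\le1$, write $x=G(u)$, $x^s=G(u+sv)$, and $z^s:=s^{-1}(x^s-x)$. First I would record the basic facts: since $t\mapsto f(t,\cdot,(u+sv)(t))$ is globally Lipschitz with constant at most $M$ by \eqref{eq-1-10}, the Cauchy--Lipschitz theorem gives existence and uniqueness of $x^s$, and a Gronwall estimate (cf.\ the first inequality of Lemma \ref{lem_e1}) yields $\|x^s-x\|_{L^\infty(I)}\le Cs\|v\|_{L^2(I)}$; hence $x^s\to x$ uniformly on $I$ and $\|z^s\|_{L^\infty(I)}\le C\|v\|_{L^2(I)}$ uniformly in $s$. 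Since $v\in L^\infty$, all the triples $(t,x^s(t),(u+sv)(t))$ and $(t,x(t),u(t))$ lie in a fixed compact subset of $[0,T]\times\R^d\times\R^m$ for $|s|\le1$, so the $W^{3,\infty}$-bound on $f$ (and hence the Lipschitz continuity of $\partial_x f$ and $\partial_u f$, with constant at most $M$) applies uniformly; in particular the solution $y$ of the stated linearized equation lies in $L^\infty(I;\R^d)$.

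Next I would write down the linear ODE satisfied by $z^s$. Splitting
\[
f(t,x^s,u+sv)-f(t,x,u)=\bigl[f(t,x^s,u+sv)-f(t,x,u+sv)\bigr]+\bigl[f(t,x,u+sv)-f(t,x,u)\bigr]
\]
and using the fundamental theorem of calculus in the first and third slots, division by $s$ gives
\[
(z^s)'(t)=A^s(t)z^s(t)+B^s(t)v(t),\qquad z^s(0)=0,
\]
with $A^s(t):=\int_0^1\partial_x f\bigl(t,\,x(t)+\theta(x^s(t)-x(t)),\,(u+sv)(t)\bigr)\,d\theta$ and $B^s(t):=\int_0^1\partial_u f\bigl(t,\,x(t),\,(u+\theta sv)(t)\bigr)\,d\theta$. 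Using the Lipschitz bounds on $\partial_x f,\partial_u f$ together with $\|x^s-x\|_{L^\infty(I)}\le Cs\|v\|_{L^2(I)}$ and $\|sv\|_{L^\infty(I)}=s\|v\|_{L^\infty(I)}$, I obtain
\[
\|A^s-A\|_{L^\infty(I)}+\|B^s-B\|_{L^\infty(I)}\le Cs\bigl(\|v\|_{L^2(I)}+\|v\|_{L^\infty(I)}\bigr),
\]
where $A:=\partial_x f(\cdot,x,u)$ and $B:=\partial_u f(\cdot,x,u)$ are the coefficients of the equation defining $y$; in particular the left side tends to $0$ as $s\to0$.

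Finally I would compare $z^s$ with $y$. Setting $w^s:=z^s-y$ and subtracting the two ODEs yields
\[
(w^s)'(t)=A^s(t)w^s(t)+\bigl(A^s(t)-A(t)\bigr)y(t)+\bigl(B^s(t)-B(t)\bigr)v(t),\qquad w^s(0)=0.
\]
Its inhomogeneous term is bounded in $L^\infty(I)$ by $\|A^s-A\|_{L^\infty(I)}\|y\|_{L^\infty(I)}+\|B^s-B\|_{L^\infty(I)}\|v\|_{L^\infty(I)}$, which tends to $0$, while $\|A^s\|_{L^\infty(I)}\le M$ uniformly in $s$; Gronwall's inequality then gives $\|w^s\|_{L^\infty(I)}\to0$ as $s\to0$, i.e.\ $s^{-1}(G(u+sv)-G(u))\to y$ in $L^\infty(I;\R^d)$, which is exactly the claim. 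The only genuinely delicate point in this scheme is the \emph{uniform} (in $t$) convergence $A^s\to A$ and $B^s\to B$; everything else reduces to two routine Gronwall estimates. It is secured by combining the bound $\|x^s-x\|_{L^\infty(I)}\le Cs\|v\|_{L^2(I)}$ with the global Lipschitz continuity of the first-order derivatives of $f$ on the fixed compact set identified above, together with $\|sv\|_{L^\infty(I)}\to0$ since $v\in L^\infty$.
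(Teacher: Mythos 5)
Your proposal is correct and follows essentially the same route as the paper: both split the increment $f(t,x^s,u+sv)-f(t,x,u)$ into a state part and a control part, bound the resulting remainder using the Lipschitz continuity of $\partial_x f$ and $\partial_u f$ together with $\|x^s-x\|_{L^\infty(I)}\le Cs$, and close with Gronwall applied to $x^s-x-sy$ (your $w^s$ is just the paper's $r(t)/s$). The only difference is bookkeeping: you phrase the error as uniform convergence of the averaged coefficients $A^s,B^s$, while the paper phrases it as an $O(s^2)$ bound on the remainder terms $A_1,A_2$.
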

\begin{proof}
Recall that $x^s$ and $x$ satisfy
\begin{equation*}
(x^s)' (t)= f(t,x^s (t), u(t) + s v(t)) \quad \textrm{and}\quad x' (t) = f (t, x(t), u(t)),
\end{equation*}
respectively. Using this, we find that $r(t) := x^s (t) - x (t) - s y(t)$ satisfies 
\begin{equation}\label{eq-d-43}
\begin{split}
&(x^s (t) - x (t) - s y(t))' (t)
\\
 & \quad = f(t, x^s, u+s v) - f(t, x, u) - s \left( \frac{\partial f}{\partial x}(t, x, u) y(t) + \frac{\partial f}{\partial u}(t, x, u) v(t) \right) 
\\
& \quad =: \frac{\partial f}{\partial x}(t,x,u) (x^s (t) - x(t) -s y(t))+ A_1(t) + A_2(t),
\end{split}
\end{equation}
where
\begin{equation*}
A_1(t) := f(t, x^s, u) - f(t,x,u) - \frac{\partial f}{\partial x}(t,x,u) (x^s (t) - x(t))
\end{equation*}
and
\begin{equation*}
A_2(t) := f(t,x^s, u+sv) - f(t,x^s, u) - s \frac{\partial f}{\partial u}(t,x,u) v(t).
\end{equation*}
Given that $|x^s (t) - x (t)|\leq Cs$ and \eqref{eq-1-10}, an elementary calculus shows that $|A_1| \leq Cs^2$ and $|A_2| \leq Cs^2$. 
With these bounds, we may apply the Gr\"onwall's lemma for \eqref{eq-d-43} to deduce $|r(t)| \leq Cs^2$ for $t \in [0,T]$. From this we find 
\begin{equation*}
\lim_{s \rightarrow 0} \frac{x^s (t) - x(t) - sy (t)}{s} = 0,
\end{equation*}
which yields that 
\[
\frac{d}{ds} x^s (t) = y(t).
\] 
\end{proof}

Next we show the twice differentiablity of the mapping $s \rightarrow G(u+sv)$ at $s=0$. 
\begin{lem}\label{lem-d-33} Let $z:[0,T] \rightarrow \mathbb{R}^d$ be the solution of 
$$\begin{aligned}
z' (t) &= \frac{\partial^2 f}{(\partial x)^2} (t,x(t),u(t)) y^2(t) + 2 \frac{\partial^2 f}{\partial x \partial u} (t,x(t),u(t)) y(t)v(t) + \frac{\partial^2 f}{(\partial u)^2} (t,x(t),u(t)) v^2(t) 
\\
&\quad + \frac{\partial f}{\partial x}(t,x(t),u(t)) z(t),\quad z(0)=0.
\end{aligned}$$
Then we have 
\[
\frac{d^2}{(ds)^2} G(u+sv)|_{s=0} = z(t).
\]
\end{lem}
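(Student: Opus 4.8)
The plan is to mimic the argument used for Lemma \ref{lem-d-1}, now working at second order. Set $x^s = G(u+sv)$, let $y$ be the first-variation function from Lemma \ref{lem-d-1}, and let $z$ solve the stated linear ODE. Define the remainder
\[
\rho(t) := x^s(t) - x(t) - sy(t) - \tfrac{s^2}{2}z(t),
\]
and the goal is to prove $|\rho(t)| \le Cs^3$ on $[0,T]$, from which $\frac{d^2}{ds^2}G(u+sv)|_{s=0} = z$ follows by taking $s \to 0$ in the difference quotient (using that $y$ is already the first derivative). To do this I would differentiate $\rho$, using $(x^s)' = f(t,x^s,u+sv)$, $x' = f(t,x,u)$, the ODE for $y$, and the ODE for $z$. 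The key is to Taylor-expand $f(t,x^s,u+sv)$ about $(t,x,u)$ to second order in the increments $(x^s - x, sv)$, which is legitimate since $f \in C([0,T];W^{3,\infty})$ by \eqref{eq-1-10}, so the third-order Taylor remainder is controlled.

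The main steps, in order: (i) From Lemma \ref{lem-d-1} we already know $|x^s - x - sy| \le Cs^2$ and $|x^s - x| \le Cs$, hence $|x^s - x| = s|y| + O(s^2)$ with $\|y\|_{L^\infty(I)} \le C\|v\|_{L^2(I)}$. (ii) Expand
\[
f(t,x^s,u+sv) = f(t,x,u) + \partial_x f(t,x,u)(x^s - x) + \partial_u f(t,x,u)(sv) + \tfrac12 Q(t) + E(t),
\]
where $Q(t)$ is the quadratic form in $(x^s-x, sv)$ built from the second derivatives of $f$ at $(t,x,u)$, and $|E(t)| \le C(|x^s-x| + s|v|)^3 \le Cs^3$ by the $W^{3,\infty}$ bound. (iii) Substitute $x^s - x = sy + O(s^2)$ into $Q(t)$; the leading part is $s^2$ times the quadratic expression $\frac{\partial^2 f}{(\partial x)^2}y^2 + 2\frac{\partial^2 f}{\partial x\partial u}yv + \frac{\partial^2 f}{(\partial u)^2}v^2$ appearing in the ODE for $z$, and the cross terms coming from the $O(s^2)$ error in $x^s - x$ contribute only $O(s^3)$ (here one uses $\|x^s - x - sy\|_{L^\infty} \le Cs^2$ multiplied by bounded second derivatives and bounded $y$). (iv) Collecting terms, $\rho$ satisfies
\[
\rho'(t) = \partial_x f(t,x,u)\,\rho(t) + R(t), \qquad \rho(0) = 0,
\]
with $\|R\|_{L^\infty(I)} \le Cs^3$. (v) Apply Grönwall's inequality to conclude $|\rho(t)| \le Cs^3 e^{CT} \le Cs^3$ on $[0,T]$.

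The main obstacle is bookkeeping in step (iii): one must be careful that the term $\partial_x f(t,x,u)(x^s-x)$ in the first-order part of the expansion combines correctly with the $\partial_x f(t,x,u)(sy + \tfrac{s^2}{2}z)$ coming from the ODEs for $y$ and $z$, so that $\partial_x f(t,x,u)$ multiplies exactly $\rho(t)$ in the end; and that substituting the merely $O(s^2)$-accurate approximation $x^s - x \approx sy$ into the quadratic form $Q$ still leaves an error of order $s^3$ and not $s^2$ — this works because $Q$ is quadratic, so replacing $x^s - x$ by $sy + O(s^2)$ produces cross terms of size $|sy|\cdot O(s^2) = O(s^3)$ and a pure $O(s^2)\cdot O(s^2) = O(s^4)$ term. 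Once this is tracked carefully, the Grönwall estimate is routine and identical in spirit to the one in Lemma \ref{lem-d-1}.
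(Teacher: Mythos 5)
Your computation is sound and the bookkeeping (including the factor $\tfrac{s^2}{2}$ in front of $z$, the substitution $x^s-x = sy+O(s^2)$ into the quadratic form, and the $O(s^3)$ control of the cross terms and of the third-order Taylor remainder via \eqref{eq-1-10}) is consistent with the ODE for $z$; the Gr\"onwall step then indeed gives $x^s - x - sy - \tfrac{s^2}{2}z = O(s^3)$. However, this is a genuinely different route from the paper's, and the difference matters at the very last step. The paper does not expand $x^s$ to second order at all: it takes $y^s := \tfrac{d}{ds}G(u+sv)$, which is already known to exist for every $s$ by Lemma \ref{lem-d-1} applied at the base control $u+sv$, writes the ODE it satisfies, and shows $y^s - y - sz = O(s^2)$ by a first-order expansion of $\partial_x f$ and $\partial_u f$ plus Gr\"onwall. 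That directly yields $\tfrac{d}{ds}y^s|_{s=0}=z$, i.e.\ the iterated second derivative, which is the object the paper actually uses (e.g.\ in Taylor's theorem with Lagrange remainder in Theorem \ref{thm-31}, where $j''$ must exist at intermediate points, not merely admit a Peano expansion at $s=0$).

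The gap in your version is the sentence ``from which $\tfrac{d^2}{ds^2}G(u+sv)|_{s=0}=z$ follows by taking $s\to 0$ in the difference quotient.'' A second-order Peano expansion $x^s = x + sy + \tfrac{s^2}{2}z + O(s^3)$ does not by itself imply that the map $s\mapsto \tfrac{d}{ds}G(u+sv)$ is differentiable at $s=0$ (this is the standard failure of the converse to Taylor's theorem: $t\mapsto t^3\sin(t^{-3})$ has a vanishing second-order expansion at $0$ but no second derivative there). To close the gap you must control the first derivative $y^s$ for $s$ near $0$ and show $(y^s-y)/s \to z$; the cleanest way is to run your Taylor/Gr\"onwall machinery on the equation for $y^s - y - sz$ rather than on $x^s - x - sy - \tfrac{s^2}{2}z$, which is precisely the paper's proof. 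What your approach buys, if completed, is the quantitative expansion of $G$ itself with an explicit $O(s^3)$ remainder, which is slightly more information than the lemma asserts; what it does not buy, as written, is the lemma's actual conclusion.
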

\begin{proof}
Let 
\[
y^s (t) = \frac{d}{ds}G(u+sv) \quad \mbox{and} \quad y(t)= \frac{d}{ds} G(u+sv)|_{s=0}.
\] 
Then we get
\begin{equation}\label{eq-d-41}
\begin{split}
&(y^s)' (t) -y' (t) - sz' (t)\\
&\quad = \frac{\partial f}{\partial x} (t,x^s, u+sv) y^s (t) + \frac{\partial f}{\partial u} (t, x^{s}, u+sv) v(t) - \frac{\partial f}{\partial x}(t, x, u) y(t)- \frac{\partial f}{\partial u}(t,x, u) v(t)
\\
&\qquad - s\left[ \frac{\partial^2 f}{(\partial x)^2} (t,x(t), u) y^2 (t) + 2 \frac{\partial^2 f}{\partial x \partial u} (t,x(t), u) y(t) v(t) \right.
\\
&\hspace{3cm} + \left. \frac{\partial^2 f}{(\partial u)^2} (t,x(t), u) v^2 (t) + \frac{\partial f}{\partial x}(t,x(t), u) z(t) \right]
\\
&\quad =: \frac{\partial f}{\partial x}(t,x(t), u) (y^s (t)- y (t) - sz(t)) + A_1 (t) + A_2 (t),
\end{split}
\end{equation}
where
\begin{equation*}
\begin{split}
A_1 (t)&:= \left[\frac{\partial f}{\partial x}(t, x^s, u+sv) - \frac{\partial f}{\partial x}(t,x,u)\right] y^s (t) 
+ s\left[ \frac{\partial^2 f}{(\partial x)^2} (t,x, u) y(t) + \frac{\partial^2 f}{\partial x \partial u}(t,x, u) v(t) \right] y(t),
\end{split}
\end{equation*}
\begin{equation*}
\begin{split}
A_2 (t) &:=\left[\frac{\partial f}{\partial u}(t,x^s, u+sv) - \frac{\partial f}{\partial u}(t,x(t), u) \right] v(t) 
 s \left[ \frac{\partial^2 f}{(\partial u)^2} (t,x, u) v(t) + \frac{\partial^2 f}{\partial x \partial u} (t,x, u) y(t) \right]v(t).
\end{split}
\end{equation*}
By Lemma \ref{lem_e1} we have $|y^s (t) - y(t)|\leq Cs$. Given this estimate and that 
\[
\frac{d}{ds} x^s (t)|_{s=0} = y(t)
\] 
from Lemma \ref{lem-d-1}, an elementary calculus shows that $|A_1 (t)| \leq Cs^2$ and $|A_2 (t)| \leq Cs^2$. Inserting this estimate into \eqref{eq-d-41} and applying the Gr\"onwall's lemma, we find
\begin{equation*}
y^s (t) - y(t) - sz(t) = O(s^2).
\end{equation*}
It proves that 
\[
\frac{d}{ds} y^s (t)|_{s=0} = z(t).
\] 
This implies that 
\[
\frac{d^2}{(ds)^2} G(u+sv)|_{s=0} =z(t)
\] 
since 
\[
y^s (t) = \frac{d}{ds} G(u+sv).
\] 
This completes the proof.
\end{proof}

\section{Gronwall-type inequality for the DG discretization of ODEs}\label{app_c}
In this section, we provide a Gronwall-type inequality for the DG discretization of ODEs with inputs. It will be used in Section \ref{app_e} to establish the differentiability of the discrete control-to-state mapping $G_h$. 

We begin with recalling from \cite[Lemma 2.4]{SS00} the following lemma.
\begin{lem}\label{lem-d-1}Let $I= (a,b)$ and $k = b-a >0$. Then we have
\begin{equation*}
\int_a^b |\phi (t)|^2 \,dt \leq \frac{1}{k} \sum_{i=1}^{d} \left( \int_a^b \phi_i (t) \, dt \right)^2 + \frac{1}{2} \int_a^b (b-t) (t-a) |\phi' (t)|^2 \,dt
\end{equation*}
for all $\phi (t) = (\phi_1 (t), \cdots, \phi_d (t)) \in P^r ((a,b); \mathbb{R}^d)$, $r \in \mathbb{N}_0$, where
\begin{equation*}
P^{r}((a,b); \mathbb{R}^d) = \{ (p_1, \cdots, p_d)~:~ p_k :(a,b) \rightarrow \mathbb{R}~\textrm{is a polynomial of order $\leq r$}\}.
\end{equation*}
\end{lem}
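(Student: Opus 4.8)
The plan is to reduce the vector estimate to a one–dimensional weighted Poincaré inequality on a reference interval and then diagonalize it with Legendre polynomials. Since $|\phi|^2=\sum_{i=1}^d\phi_i^2$ and $|\phi'|^2=\sum_{i=1}^d(\phi_i')^2$, it suffices to prove the scalar statement
\[
\int_a^b \phi(t)^2\,dt \le \frac1k\left(\int_a^b\phi(t)\,dt\right)^2+\frac12\int_a^b(b-t)(t-a)\,\phi'(t)^2\,dt
\]
for every real polynomial $\phi$ of degree $\le r$ on $(a,b)$, and then sum over $i=1,\dots,d$. Writing $\bar\phi:=\frac1k\int_a^b\phi$ and using the orthogonal splitting $\int_a^b\phi^2=k\bar\phi^2+\int_a^b(\phi-\bar\phi)^2$ together with $k\bar\phi^2=\frac1k(\int_a^b\phi)^2$, the claim becomes the mean–zero estimate
\[
\int_a^b(\phi-\bar\phi)^2\,dt \le \frac12\int_a^b(b-t)(t-a)\,\phi'(t)^2\,dt .
\]

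Next I would normalize to the reference interval $(-1,1)$. The affine substitution $t=\frac{a+b}2+\frac k2 s$ sends $(b-t)(t-a)$ to $\frac{k^2}4(1-s^2)$, multiplies $dt$ by $\frac k2$ and $\phi'(t)^2$ by $\frac4{k^2}$, so all three integrals acquire the common factor $\frac k2$ and the mean of $\phi$ corresponds to the mean of $\psi(s):=\phi(t(s))$. Hence the inequality reduces to: for every polynomial $\psi$ on $(-1,1)$ with $\int_{-1}^1\psi=0$,
\[
\int_{-1}^1\psi(s)^2\,ds \le \frac12\int_{-1}^1(1-s^2)\,\psi'(s)^2\,ds .
\]

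To establish this I would expand $\psi=\sum_{n=0}^r c_nP_n$ in Legendre polynomials, the mean–zero condition forcing $c_0=0$. Using the orthogonality $\int_{-1}^1P_mP_n=\frac{2}{2n+1}\delta_{mn}$, the Legendre identity $\big((1-s^2)P_n'\big)'=-n(n+1)P_n$, and one integration by parts whose boundary terms vanish because $1-s^2$ does, one obtains
\begin{align*}
\int_{-1}^1\psi^2 &= \sum_{n\ge1}\frac{2c_n^2}{2n+1},\\
\int_{-1}^1(1-s^2)(\psi')^2 &= \sum_{n\ge1} n(n+1)\,\frac{2c_n^2}{2n+1}.
\end{align*}
Since $\frac12 n(n+1)\ge1$ for every integer $n\ge1$, the inequality follows term by term, and unwinding the two reductions proves the lemma, with equality precisely when $\phi$ is affine.

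The only substantive point is the estimate on $(-1,1)$: it amounts to recognizing the right-hand side as the Dirichlet form of the singular Sturm–Liouville (Legendre) operator $\psi\mapsto-\big((1-s^2)\psi'\big)'$ and reading off that its smallest eigenvalue on the mean–zero subspace is $n(n+1)$ at $n=1$, i.e. $2$. For polynomial $\psi$ the finite Legendre expansion makes this entirely explicit and sidesteps any functional-analytic subtlety at the degenerate endpoints, so I expect no real obstacle beyond carefully tracking the scaling constants through the change of variables and the subtraction of the mean.
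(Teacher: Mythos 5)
Your proof is correct. Note that the paper does not actually prove this lemma: it is quoted verbatim from Sch\"otzau--Schwab \cite[Lemma 2.4]{SS00}, so there is no in-paper argument to compare against; your write-up supplies a complete, self-contained derivation. The three reductions are all sound: the orthogonal splitting $\int_a^b\phi^2 = k\bar\phi^2+\int_a^b(\phi-\bar\phi)^2$ converts the first right-hand term exactly into the mean, the affine map $t=\tfrac{a+b}{2}+\tfrac{k}{2}s$ carries $(b-t)(t-a)$ to $\tfrac{k^2}{4}(1-s^2)$ and scales all three integrals by the common factor $\tfrac{k}{2}$, and the Legendre computation is right: integrating by parts (the weight $1-s^2$ kills the boundary terms) and using $\bigl((1-s^2)P_n'\bigr)'=-n(n+1)P_n$ gives $\int_{-1}^1(1-s^2)(\psi')^2=\sum_{n\ge1}n(n+1)\,\tfrac{2c_n^2}{2n+1}$, and $n(n+1)\ge 2$ for $n\ge1$ closes the estimate term by term. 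This Legendre-eigenvalue argument is essentially the standard proof of the cited result.
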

\noindent The next result is from \cite[Lemma 3.1]{SS00}.
\begin{lem}\label{lem-d-2} For $I = (a,b)$ and $r \in \mathbb{N}_0$, we have
\begin{equation*}
\|\phi\|_{L^{\infty}(I)}^2 \leq C \log (r+1) \int_a^b |\phi' (t)|^2 (t-a) \,dt + C|\phi (b)|^2
\end{equation*}
for all $\phi (t) = (\phi_1 (t), \cdots, \phi_d (t)) \in P^r ((a,b); \mathbb{R}^d)$. Here $C>0$ is independent of $r$, $a$, $b$, and $d$. 
\end{lem}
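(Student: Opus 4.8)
The plan is to establish this by reducing, via linearity and an affine rescaling, to a one–dimensional polynomial inequality on the unit interval, and then to split that interval into its bulk — where a weighted Cauchy--Schwarz estimate suffices — and a small neighbourhood of the left endpoint $a$, where a classical Markov-type polynomial inequality is the essential input. The $\log(r+1)$ factor will come out precisely as the logarithm of the optimal cutoff $\sim r^{-2}$ separating the two regions.

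\textbf{Reductions.} Since $\|\phi\|_{L^\infty(I)}^2 = \sup_t \sum_{i=1}^d \phi_i(t)^2 \le \sum_{i=1}^d \|\phi_i\|_{L^\infty(I)}^2$, while the right-hand side of the asserted estimate splits componentwise, it is enough to prove the inequality for each scalar component $\phi_i \in P^r(I;\mathbb{R})$ and sum; this keeps $C$ independent of $d$. Next, applying the substitution $t = a + ks$ with $k = b-a$ to $\psi(s) := \phi_i(a+ks) \in P^r((0,1);\mathbb{R})$ gives $\|\psi\|_{L^\infty(0,1)} = \|\phi_i\|_{L^\infty(I)}$, $\psi(1) = \phi_i(b)$ and $\int_0^1 |\psi'(s)|^2 s\,ds = \int_a^b |\phi_i'(t)|^2(t-a)\,dt$, so the whole statement reduces to: for every $p \in P^r((0,1);\mathbb{R})$,
\[
\|p\|_{L^\infty(0,1)}^2 \le C\log(r+1)\int_0^1 |p'(s)|^2 s\,ds + C|p(1)|^2 .
\]
For $r = 0$ this is trivial ($p$ constant, $\log(r+1)=0$, $\|p\|_{L^\infty}=|p(1)|$), so from now on $r \ge 1$.

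\textbf{The scalar inequality.} Write $M := \|p\|_{L^\infty(0,1)} = |p(s^\ast)|$ for some $s^\ast \in [0,1]$, and set $s_1 := 1/(4(r+1)^2)$. If $s^\ast < s_1$, the classical Markov inequality on an interval of length $1$, namely $\|p'\|_{L^\infty(0,1)} \le 2r^2\|p\|_{L^\infty(0,1)} = 2r^2 M$, yields $|p(s^\ast)-p(s_1)| \le \int_{s^\ast}^{s_1}|p'| \le 2r^2 s_1 M \le \tfrac12 M$, hence $|p(s_1)| \ge \tfrac12 M$; if instead $s^\ast \ge s_1$, then trivially $|p(s^\ast)| = M$ at the point $s^\ast \ge s_1$. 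In either case there is $\sigma \in [s_1,1]$ with $|p(\sigma)| \ge \tfrac12 M$. Then, writing $p(\sigma) = p(1) - \int_\sigma^1 p'(\tau)\,d\tau$ and applying Cauchy--Schwarz against the weight $\tau$,
\[
\tfrac12 M \le |p(1)| + \Big(\int_\sigma^1 \frac{d\tau}{\tau}\Big)^{1/2}\Big(\int_\sigma^1 |p'(\tau)|^2\tau\,d\tau\Big)^{1/2} \le |p(1)| + \big(\log(1/s_1)\big)^{1/2}\Big(\int_0^1 |p'(\tau)|^2\tau\,d\tau\Big)^{1/2}.
\]
Since $\log(1/s_1) = \log 4 + 2\log(r+1) \le C\log(r+1)$ for $r \ge 1$, squaring and using $(a+b)^2 \le 2a^2+2b^2$ gives the scalar inequality, and unwinding the two reductions finishes the argument.

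\textbf{Main obstacle.} The only genuine difficulty is the behaviour near the left endpoint: the naive bound $|p(s)| \le |p(1)| + (\log(1/s))^{1/2}\|\sqrt{\cdot}\,p'\|_{L^2}$ degenerates as $s \downarrow 0$, so one cannot feed the point where $|p|$ is maximal directly into the weighted Cauchy--Schwarz estimate. The resolution is that a degree-$r$ polynomial cannot attain its maximum on a scale finer than $\sim r^{-2}$ near an endpoint without being comparable to its values at distance $\sim r^{-2}$ inside, which is exactly what Markov's inequality quantifies; this is the one ingredient that is not elementary. In writing the proof out one should be careful about the Markov constant on an interval of length $1$ (it is $2r^2$, not $r^2$) and verify that the choice $s_1 = 1/(4(r+1)^2)$ indeed makes the transfer factor $2r^2 s_1 \le \tfrac12$.
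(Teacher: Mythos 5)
Your proof is correct. Note first that the paper does not actually prove this lemma: it is quoted verbatim from \cite[Lemma 3.1]{SS00}, where the argument runs through an expansion of $\phi'$ in Legendre polynomials on the reference interval, with the $\log(r+1)$ factor emerging from a harmonic sum $\sum_{i\lesssim r} i^{-1}$ over the Legendre coefficients. Your argument is a genuinely different, elementary real-variable route: the componentwise reduction and affine rescaling are routine and correctly carried out (in particular the weight $(t-a)$ transforms exactly as you claim, and the constant stays independent of $d$ because both sides split as sums over components); the weighted Cauchy--Schwarz step $\int_\sigma^1|p'|\le(\log(1/\sigma))^{1/2}(\int_\sigma^1|p'|^2\tau\,d\tau)^{1/2}$ is the natural source of the logarithm; and the one nontrivial ingredient --- that the maximizer can be pushed away from the degenerate endpoint $s=0$ to distance $\gtrsim(r+1)^{-2}$ at the cost of a factor $\tfrac12$ --- is correctly supplied by Markov's inequality with the right constant $2r^2$ on an interval of length one, and your choice $s_1=1/(4(r+1)^2)$ does make $2r^2s_1\le\tfrac12$. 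The $r=0$ case is also handled, which matters since $\log(r+1)$ vanishes there. What each approach buys: the Legendre-expansion proof of \cite{SS00} is tied to the machinery used throughout their $hp$ error analysis and yields sharp constants in that framework, whereas your proof is self-contained, requires nothing beyond Markov's inequality, and makes transparent why the cutoff scale $r^{-2}$ and hence the $\log(r+1)$ are the correct ones. Either would serve the purposes of this paper, where the lemma is only used as a black box in the Gronwall-type estimate of Lemma \ref{lem-d-4}.
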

\noindent We shall use the following Gr\"onwall inequality.
\begin{lem}\label{lem-d-3}
 Let $\{a_n\}_{n=1}^{N}$ and $\{b_n\}_{n=1}^{N}$ be sequences of non-negative numbers satisfying $b_1 \leq b_2 \leq\cdots \leq b_{N}$ and $b_1= 0$. Assume that for a value $h \in (0,1/2)$ we have
 \begin{equation*}
 (1-h) b_{n+1} \leq b_{n} + a_{n}
 \end{equation*}
 for $n \in \mathbb{N}$. Then there exists a constant $Q>0$ independent of $h \in (0,1/2)$ and $N \in \mathbb{N}$ such that
 \begin{equation*}
 b_n \leq e^{Q (nh)} \sum_{k=1}^{n} a_k
 \end{equation*} 
 for any $n \in \mathbb{N}$ with $n \leq N/h$.
\end{lem}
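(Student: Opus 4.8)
The plan is to turn the one-step inequality into an explicit multiplicative recursion whose growth factor is bounded uniformly in $h$, iterate it from the initial value $b_1=0$, and then recast the resulting geometric factor in exponential form with an absolute constant.

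First I would exploit that $h\in(0,1/2)$ forces $1-h\ge 1/2$, so that
\[
\frac{1}{1-h}=1+\frac{h}{1-h}\le 1+2h .
\]
Dividing the hypothesis $(1-h)b_{n+1}\le b_n+a_n$ by $1-h>0$ — legitimate since all the terms are nonnegative — then yields the clean recursion $b_{n+1}\le (1+2h)(b_n+a_n)$.

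Next I would iterate this bound. A straightforward induction on $n$, with the empty-sum base case $b_1=0$, gives
\[
b_n\le\sum_{k=1}^{n-1}(1+2h)^{\,n-k}a_k
\]
for every $n$ in the range where the recursion is available. Since $a_k\ge 0$ and $(1+2h)^{\,n-k}\le(1+2h)^{n}$ for $1\le k\le n$, this upgrades to $b_n\le(1+2h)^{n}\sum_{k=1}^{n}a_k$. Finally, using $\log(1+x)\le x$ for $x\ge 0$ I get $(1+2h)^{n}=e^{\,n\log(1+2h)}\le e^{2hn}$, hence
\[
b_n\le e^{2(nh)}\sum_{k=1}^{n}a_k ,
\]
so the claim holds with the absolute constant $Q=2$. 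The monotonicity of $\{b_n\}$ and the explicit range $n\le N/h$ do not enter this argument except to guarantee that the recursion and all the indices are well-defined; they are included because they hold automatically in the discrete control-to-state estimates of Appendix \ref{app_e} that invoke this lemma.

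I do not expect a genuine obstacle here. The only point needing care is verifying that the two elementary estimates $\frac{1}{1-h}\le 1+2h$ and $\log(1+2h)\le 2h$ are uniform over $h\in(0,1/2)$ — this uniformity is exactly what makes the final constant $Q$, and the fact that the exponent $e^{Q(nh)}$ is linear in $nh$ rather than in $n$, independent of both $h$ and $N$.
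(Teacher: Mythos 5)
Your argument is correct and is exactly the induction the paper has in mind (the paper's own proof is the single line ``The proof can be obtained by induction''). The two elementary bounds $\tfrac{1}{1-h}\le 1+2h$ for $h\in(0,1/2)$ and $(1+2h)^n\le e^{2nh}$ are both valid, so the conclusion holds with $Q=2$.
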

\begin{proof} The proof can be obtained by induction.  
\end{proof}
\noindent Now we obtain the Gr\"onwall-type inequality. 
\begin{lem}\label{lem-d-4} Suppose that
\begin{equation}\label{eq-d-r}
\begin{split}
|B(x, \varphi)| \leq C \sum_{n=1}^{N} \lt( |(x(t), \varphi (t))_{I_n}| + |(u(t), \varphi(t))_{I_n}|\rt)
\end{split}
\end{equation}
for all $\varphi \in X_h^r$. Then there exists a constant $C>0$ independent of $h >0$ such that
\begin{equation*}
\| x \|_{L^{\infty}(I)} \leq C \|u \|_{L^{2}(I)}
\end{equation*}
for all $u_1, u_2 \in \mathcal{U}_{ad}$ and $h > 0$ small enough. 
\end{lem}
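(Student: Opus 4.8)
The plan is to test the bilinear identity \eqref{eq-d-r} against the discrete solution itself on each sub-interval, sum the resulting local inequalities, and then invoke the discrete Gr\"onwall inequality of Lemma \ref{lem-d-3}. More precisely, I would first unwind the definition \eqref{def_B} of $B$: taking $\varphi = x$ on $I_n$ (extended by zero elsewhere, which is legitimate since $X_h^r$ is unconstrained across nodes), the boundary terms in $B(x,x)$ reorganize by the usual DG telescoping into $\tfrac12|x_N^-|^2 + \tfrac12|x_0^+|^2 + \tfrac12\sum_{n=1}^{N-1}|[x]_n|^2 \ge \tfrac12|x_n^-|^2$ for each $n$ after summing the first $n$ intervals, while the interior derivative term contributes $\sum_{k\le n}(x',x)_{I_k} = \tfrac12\sum_{k\le n}(|x_k^-|^2 - |x_{k-1}^+|^2) + \tfrac12\sum|[x]|^2$. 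The standard bookkeeping gives a lower bound of the form $\tfrac12|x_n^-|^2 + \tfrac12\sum_{k=1}^n\|\text{(jumps)}\|^2 \le |B(x,x)|_{\le n}$, where the left side controls $\int_0^{t_n}$-type quantities.

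The second step is to estimate the right side of \eqref{eq-d-r} with $\varphi = x$: one gets $C\sum_{k=1}^n(\|x\|_{L^2(I_k)}^2 + \|u\|_{L^2(I_k)}\|x\|_{L^2(I_k)})$. The term $\|x\|_{L^2(I_k)}^2$ is the dangerous one because it is not yet controlled by node values; here I would use Lemma \ref{lem-d-1} to write $\|x\|_{L^2(I_k)}^2 \le \tfrac1{h_k}\sum_i(\int_{I_k}x_i)^2 + \tfrac12\int_{I_k}(t_k-t)(t-t_{k-1})|x'|^2$, and absorb these via the polynomial inverse estimates (the $(t_k-t)(t-t_{k-1})\le h_k^2/4$ factor produces an $h_k^2\|x'\|_{L^2(I_k)}^2$ that, by an inverse inequality on $P^r$, is $\lesssim \|x\|_{L^2(I_k)}^2$ again — so one must instead relate $\|x\|_{L^2(I_k)}^2$ to $h_k|x_{k-1}^+|^2$ plus a piece controlled by the left side). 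Carrying this through produces a recursion of the shape $(1-Ch)b_{n+1} \le b_n + a_n$ with $b_n$ a monotone quantity dominating $|x_n^-|^2$ and $a_n \sim \|u\|_{L^2(I_n)}^2$ (after a Young's inequality to split the cross term $\|u\|\|x\|$).

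The third step applies Lemma \ref{lem-d-3} to this recursion, yielding $b_n \le e^{QT}\sum_{k=1}^N\|u\|_{L^2(I_k)}^2 = e^{QT}\|u\|_{L^2(I)}^2$, hence a uniform $L^2$-type bound on $x$ together with control of all node values $|x_n^\pm|$. The final step upgrades this to the $L^\infty$ bound: on each $I_n$, Lemma \ref{lem-d-2} gives $\|x\|_{L^\infty(I_n)}^2 \le C\log(r+1)\int_{I_n}|x'|^2(t-t_{n-1})\,dt + C|x_n^-|^2$, and the first term is bounded by $C h_n \|x'\|_{L^2(I_n)}^2$, which the inverse estimate and the already-established $L^2$ bound control. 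Taking the maximum over $n$ finishes the proof.

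I expect the main obstacle to be the second step: extracting a clean recursion from the right-hand side. The subtlety is that the term $\|x\|_{L^2(I_k)}^2$ on the right cannot simply be absorbed into the left (the left only controls node values, not the full $L^2$ norm on $I_k$), so one must use Lemma \ref{lem-d-1} carefully to trade it for $h_k|x_{k-1}^+|^2 + h_k^2\|x'\|_{L^2(I_k)}^2$, then handle the derivative term through a local energy identity rather than a crude inverse estimate — this is exactly the mechanism by which the hypothesis $hL<1$ (here, $Ch < 1$) enters to make $1-Ch > 0$ and keep the Gr\"onwall recursion non-degenerate.
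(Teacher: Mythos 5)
Your outline is essentially the paper's proof: test \eqref{eq-d-r} locally against $x\,1_{I_n}$, use Lemma \ref{lem-d-1} to trade $\|x\|_{L^2(I_n)}^2$ for the interval mean plus a weighted derivative term, close a recursion of the form $(1-Ch_n)\,|x_n^-|^2 \le |x_{n-1}^-|^2 + C\|u\|_{L^2(I_n)}^2$ via the discrete Gr\"onwall Lemma \ref{lem-d-3}, and finish with Lemma \ref{lem-d-2}. The one device you leave implicit --- the ``local energy identity'' needed to control the weighted derivative term and the mean without a circular inverse estimate --- is supplied in the paper by two further test functions supported on $I_n$, namely $\varphi=(t-t_{n-1})\,x'\,1_{I_n}$, which yields $\int_{I_n}(t-t_{n-1})|x'|^2\,dt\le\int_{I_n}(t-t_{n-1})\bigl(|x|^2+|u|^2\bigr)\,dt$, and $\varphi=(t-t_{n-1})\,1_{I_n}$, which bounds $\bigl|\int_{I_n}x\,dt\bigr|^2$ by $2h_n^2|x_n^-|^2$ plus lower-order terms; with these in hand the Gr\"onwall step controls $\int_{I_n}(t-t_{n-1})|x'|^2\,dt$ directly, which is exactly the quantity appearing in Lemma \ref{lem-d-2}, so the inverse-estimate detour in your final step is unnecessary.
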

\begin{proof}
From the condition \eqref{eq-d-r} we have
\begin{equation*}
\begin{split}
&\left| \sum_{n=1}^{N} (x' (t), \varphi (t))_{I_n}  + \sum_{n=2}^{N} ([x]_{n-1}, \varphi_{n-1}^{+})_{I_n} + (x_{0}^{+}, \varphi_0^{+})_{I_1}\right|\cr
&\quad  \leq C \sum_{n=1}^{N}  |(x(t), \varphi (t))_{I_n}| + |(u(t), \varphi(t))_{I_n}|
\end{split}
\end{equation*}
for all $\varphi \in X_h^r$. To obtain the desired estimates, for each $n \in \{1, \cdots, N\}$ we shall take the following test functions $\varphi \in X_h^r$ supported on $I_n$ given as 
\begin{equation*}
\begin{split}
\varphi (t)& = (x_1 -x_2) (t) 1_{I_n} (t),
\\
\varphi (t) &= (t-t_{n-1}) (x_1 - x_2)' (t) 1_{I_n} (t), \quad \mbox{and}
\\
\varphi (t) & = (t-t_{n-1})1_{I_n}(t),
\end{split}
\end{equation*}
where $1_{I_n}: I \to \{0,1\}$ denotes the indicator function, that is, $1_{I_n}(t) = 1$ for $t \in I_n$ and $1_{I_n}(t) = 0$ for $t \in I\setminus I_n$.
First we take $\varphi (t) = x(t)1_{I_n}(t)$ for $n=1,2,\cdots, N$. Then,  
\begin{equation}\label{eq-d-1}
\begin{split}
(x' (t), x (t))_{I_n}+ \left( [x]_{n-1}, x_{n-1}^{+}\right) \leq  C  |(x(t), x (t))_{I_n}| + |(u(t),x(t))_{I_n}|,
\end{split}
\end{equation}
where for $n=1$ we abuse a notation $[x]_{0}$ to mean $x_0^{+}$. Notice that 
\begin{equation*}
([x]_{n-1}, x_{n-1}^{+}) = (x_{n-1}^{+})^2 - (x_{n-1}^{-}, x_{n-1}^{+}),
\end{equation*}
where for $n=1$ the above is understood as $([x]_{0}, x_0^{+}) = (x_0^{+})^2.$ Using this in \eqref{eq-d-1}, we find
\begin{equation*}
\begin{split}
&\frac{1}{2} |x_{n}^{-} |^2 - \frac{1}{2} |x_{n-1}^{+}|^2 +| x_{n-1}^{+}|^2\leq (x_{n-1}^{-}, x_{n-1}^{+}) +  C  |(x(t), x (t))_{I_n}| + |(u(t),x(t))_{I_n}|.
\end{split}
\end{equation*}
By applying Cauchy-Schwarz inequality, we obtain
\begin{equation}\label{eq-d-2}
\frac{1}{2}|x_{n}^{-}|^2 \leq \frac{1}{2} |x_{n-1}^{-}|^2 
+  C  \|x(t)\|_{L^2(I_n)}^2 + C\|u(t)\|_{L^2 (I_n)}^2.
\end{equation}
Secondly, we take $\varphi (t) = (t-t_{n-1})x' (t)1_{I_n} (t)$ to have 
\begin{equation*}
\begin{split}
(x' (t), (t-t_{n-1}) x' (t))_{I_n}  \leq  \left( x(t),~(t-t_{n-1})x' (t) \right)_{I_n} +\left( u(t),~(t-t_{n-1})x' (t) \right)_{I_n}.
\end{split}
\end{equation*}
By using H\"older's inequality, we get
\begin{equation}\label{eq-d-3}
\int_{I_n} (t-t_{n-1}) |x' (t)|^2\,dt \leq \int_{I_n} |t-t_{n-1}| (|x(t)|^2 + |u(t)|^2 )\,dt.
\end{equation}
Notice that
\begin{equation*}
\left(x' (t), (t-t_{n-1})\right)_{I_n} = - \int_{I_n} x(t) \,dt + x (t_n) (t_n - t_{n-1}).
\end{equation*}
Thus, choosing $\varphi (t) = (t-t_{n-1})1_{I_n}(t)$ gives
\begin{equation*}
\begin{split}
\left|\int_{I_n} x(t)\,dt + x (t_n) (t_n - t_{n-1}) \right|
\leq  C \int_{I_n} |x(t)| (t-t_{n-1})\,dt + C \int_{I_n} |u(t)| (t-t_{n-1})\,dt,
\end{split}
\end{equation*}
and subsequently, this yields
\begin{equation*}
\begin{split}
\left| \int_{I_n} x(t)\,dt \right|^2 &  \leq 2h_n^2 | x_{n}^{-}|^2 + 2 \int_{I_n}( x(t)^2+ u(t)^2 )\, dt \int_{I_n} (t_{n-1} -t)^2 \,dt
\\
&  \leq 2h_n^2 |x_{n}^{-}|^2 + Ch_n^3  \int_{I_n}( x(t)^2+ u(t)^2 )\, dt,
\end{split}
\end{equation*}
where $h_n = t_n - t_{n-1}$.
This together with Lemma \ref{lem-d-1} asserts
\begin{equation}\label{eq-d-5}
\begin{split}
\left| \int_{I_n} x (t) \,dt\right|^2 & \leq 2h_n^2 |x_n^{-}|^2 + C h_n^4 \int_{I_n} (t-t_{n-1}) |x' (t)|^2 \,dt  + Ch_n^3 \int_{I_n}  |u(t)|^2 \,dt
\end{split}
\end{equation}
for $h > 0$ small enough. Combining \eqref{eq-d-2} and \eqref{eq-d-3}, we find
\begin{equation*}
\begin{split}
&\int_{I_n} (t-t_{n-1}) |x' (t)|^2 \,dt + |x_{n}^{-}|^2
\\
&\quad \leq C\|x\|_{L^2(I_n)}^2 + C \int_{I_n}|u(t)|^2 \,dt + |x_{n-1}^{-}|^2
\\
&\quad \leq \frac{C}{h_n} \left| \int_{I_n} x(t)\, dt\right|^2 + Ch_n \int_{I_n} (t-t_{n-1}) |x' (t)|^2 \,dt + |x_{n-1}^{-}|^2 + C\int_{I_n} |u(t)|^2\, dt,
\end{split}
\end{equation*}
where we applied Lemma \ref{lem-d-1} in the second inequality. This, together with \eqref{eq-d-5}, we obtain
\begin{equation}\label{eq-t-1}
\frac{1}{2}\int_{I_n} (t-t_{n-1}) |x' (t)|^2 \,dt + |x_{n}^{-}|^2
 \leq Ch_n |x_{n}^{-}|^2 + |x_{n-1}^{-}|^2 +  C\int_{I_n} |u(t)|^2 \, dt
\end{equation}
for $h > 0$ small enough, where for $n=1$ one has $|x_0^{-}| =0$. This inequality trivially gives 
\begin{equation*}
 |x_{n}^{-}|^2
 \leq Ch_n |x_{n}^{-}|^2 + |x_{n-1}^{-}|^2 +  C\int_{I_n} |u(t)|^2 \, dt
\end{equation*}
for $n=1, \cdots, N$. 
Now, by applying Lemma \ref{lem-d-3} to find an estimate of $|x_n^{-1}|^2$ and inserting it into \eqref{eq-t-1}, we achieve
\begin{equation*}
\frac{1}{2}\int_{I_n} (t-t_{n-1}) |x' (t)|^2 \,dt + |x_{n}^{-}|^2 \leq C \int_{0}^{T} |u(t)|^2 \, dt.
\end{equation*}
Finally, by applying Lemma \ref{lem-d-2} to the above, we obtain the desired estimate.
\end{proof}
As a corollary, we have the following Lipshitz estimates.
\begin{lem}\label{lem-3-5} For $u, v \in \mathcal{U}_{ad}$ we have
\begin{equation*}
\|G_h (u) - G_h (v)\|_{L^{\infty}(I)} \leq C \|u-v\|_{L^2 (I)}
\end{equation*}
and
\begin{equation*}
\|\lambda_h (u) - \lambda_h (v) \|_{L^{\infty}(I)} \leq C \|u-v\|_{L^2 (I)}.
\end{equation*}
\end{lem}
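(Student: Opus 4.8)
The plan is to derive both bounds from the Gr\"onwall-type inequality of Lemma \ref{lem-d-4}, applied directly to the state equation and, after a time reversal via Lemma \ref{lem-3-6}, to the adjoint equation.

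First I would treat $G_h$. Writing $x_h = G_h(u)$ and $\hat x_h = G_h(v)$ and subtracting the two instances of \eqref{dg_b} (the initial-data terms $(x_0,\varphi_0^+)$ cancel, and $B$ is linear in its first argument), one gets $B(x_h - \hat x_h,\varphi) = (f(\cdot,x_h,u) - f(\cdot,\hat x_h,v),\varphi)_I$ for all $\varphi \in X_h^r$. By \eqref{eq-1-10} the integrand obeys the pointwise bound $|f(t,x_h(t),u(t)) - f(t,\hat x_h(t),v(t))| \leq C(|x_h(t) - \hat x_h(t)| + |u(t) - v(t)|)$ for a.e.\ $t$, whence $|B(x_h - \hat x_h,\varphi)| \leq C\sum_{n=1}^N(|(x_h - \hat x_h,\varphi)_{I_n}| + |(u - v,\varphi)_{I_n}|)$. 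This is exactly hypothesis \eqref{eq-d-r} of Lemma \ref{lem-d-4}, with $x_h - \hat x_h$ playing the role of $x$ and $u - v$ that of the input, so that lemma gives $\|x_h - \hat x_h\|_{L^\infty(I)} \leq C\|u - v\|_{L^2(I)}$.

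For the adjoint estimate I would first record a uniform bound $\|\lambda_h(w)\|_{L^\infty(I)} \leq C$ valid for all $w \in \mathcal{U}_{ad}$ with $C$ independent of $w$ and $h$. Equation \eqref{eq-2-1} has the form $B(\varphi,\lambda_h) = (\varphi, \partial_x f(\cdot,x_h,w)\lambda_h - \partial_x g(\cdot,x_h,w))_I$; by Lemma \ref{lem-3-6} the reversed function $W(t) = \lambda_h(T-t)$ on the reversed partition satisfies a \emph{forward} equation $B(W,\psi) = (\tilde a\,W + \tilde b,\psi)_I$ with $\|\tilde a\|_{L^\infty} \leq M$ and $|\tilde b| \leq M$ by \eqref{eq-1-10}, hence $|B(W,\psi)| \leq C\sum_n(|(W,\psi)_{I_n}| + |(\tilde b,\psi)_{I_n}|)$ and Lemma \ref{lem-d-4} yields $\|\lambda_h(w)\|_{L^\infty(I)} = \|W\|_{L^\infty(I)} \leq C\|\tilde b\|_{L^2(I)} \leq C$. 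Next, with $\lambda_h = \lambda_h(u)$, $\hat\lambda_h = \lambda_h(v)$, $x_h = G_h(u)$, $\hat x_h = G_h(v)$, I would subtract the two instances of \eqref{eq-2-1} and split the nonlinear term as $\partial_x f(\cdot,x_h,u)\lambda_h - \partial_x f(\cdot,\hat x_h,v)\hat\lambda_h = \partial_x f(\cdot,x_h,u)(\lambda_h - \hat\lambda_h) + (\partial_x f(\cdot,x_h,u) - \partial_x f(\cdot,\hat x_h,v))\hat\lambda_h$, obtaining $B(\varphi,\lambda_h - \hat\lambda_h) = (\varphi, \partial_x f(\cdot,x_h,u)(\lambda_h - \hat\lambda_h) + \rho)_I$ with $\rho := (\partial_x f(\cdot,x_h,u) - \partial_x f(\cdot,\hat x_h,v))\hat\lambda_h - (\partial_x g(\cdot,x_h,u) - \partial_x g(\cdot,\hat x_h,v))$. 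Using \eqref{eq-1-10}, the uniform bound on $\hat\lambda_h$, and the state estimate already obtained, $\|\rho\|_{L^2(I)} \leq C(\|x_h - \hat x_h\|_{L^2(I)} + \|u-v\|_{L^2(I)}) \leq C\|u-v\|_{L^2(I)}$. Reversing time via Lemma \ref{lem-3-6} once more (the coefficient $\partial_x f(\cdot,x_h,u)$ is bounded by $M$) and applying Lemma \ref{lem-d-4} with $\rho$ as the input gives $\|\lambda_h - \hat\lambda_h\|_{L^\infty(I)} \leq C\|\rho\|_{L^2(I)} \leq C\|u-v\|_{L^2(I)}$.

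The main obstacle is the backward character of the discrete adjoint equation: since Lemma \ref{lem-d-4} is stated for the forward DG bilinear form $B(x,\cdot)$, one must first pass to the time-reversed equation through Lemma \ref{lem-3-6} before it is applicable, and one must keep the genuinely linear-in-difference term $\partial_x f(\cdot,x_h,u)(\lambda_h - \hat\lambda_h)$ on the coefficient side of Lemma \ref{lem-d-4} so that only $\rho$ — whose $L^2$ norm is controlled by the state estimate and the uniform bound on $\lambda_h$ — acts as the inhomogeneity. The remaining steps are routine applications of \eqref{eq-1-10} and the Cauchy-Schwarz inequality.
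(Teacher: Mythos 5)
Your argument is correct and follows essentially the same route as the paper's proof in Appendix \ref{app_c}: subtract the two discrete state (resp.\ adjoint) equations, use the Lipschitz bounds from \eqref{eq-1-10} together with the uniform bound $\|\lambda_h(w)\|_{L^\infty(I)}\leq C$, and invoke the discrete Gr\"onwall-type Lemma \ref{lem-d-4}, applied to the adjoint after the time reversal of Lemma \ref{lem-3-6}. Your write-up is in fact slightly more explicit than the paper's about why the reversal is needed and about keeping the factor $\hat\lambda_h$ in the remainder term, but the decomposition and the key lemmas are identical.
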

\begin{proof}
Let us denote by $x = G_h(u)$ and $\hat{x} = G_h(v)$. Then it follows from \eqref{eq-dg} that
$$\begin{aligned}
B((x -\hat{x}), \varphi )&= \Big(f(t,x(t),u(t)) -  f(t,\hat x (t), \hat u (t)), ~\varphi\Big) \quad \forall\,\varphi \in X_h^r.
\end{aligned}$$
By \eqref{eq-1-10}, there exists a constant $C>0$ such that
\begin{equation*}
\begin{split}
\left|f(t,x(t),u(t)) - f(t,\hat x(t), \hat{u}(t))\right| &\leq C|\hat{x}(t) - x(t)| + C |\hat{u}(t) - u(t)|.
\end{split}
\end{equation*}
By applying Lemma \ref{lem-d-4}, we get the inequality
\begin{equation*}
\|x - \hat{x}\|_{L^{\infty}(I)} \leq  C \|u-\hat{u}\|_{L^{2}(I)} .
\end{equation*}
This gives the first inequality. For the second one, we denote by $\lambda = \lambda_h (u)$ and $\hat \lambda = \lambda_h (v)$. Then, we see from Lemma \ref{eq-2-1} that
$$\begin{aligned}
B(\varphi,~(\lambda - \hat\lambda)) &= \Big(\varphi, ~\pa_x f(\cdot,x,u)(\lambda - \hat\lambda)(t) + (\pa_x f(\cdot,x,u) - \pa_x f(\cdot,\hat x , \hat u))(t)
\cr
&\hspace{2cm} -  (\pa_x g(\cdot,x,u) - \pa_x g(\cdot, \hat x, \hat u))\Big)_{I} \quad \forall\,\varphi \in X_h^r.
\end{aligned}$$
By applying Lemma \ref{lem-d-4} again in a backward way (see Lemma \ref{lem-3-6}), we obtain
$$\begin{aligned}
\|\lambda - \hat\lambda\|_{L^\infty(I)} &\leq C\|(\pa_x f(\cdot,x,u) - \pa_x f(\cdot,\hat x , \hat u))\hat\lambda\|_{L^2(I)} \cr
&\quad + C\|\pa_x g(\cdot,x,u) - \pa_x g(\cdot, \hat x, \hat u)\|_{L^2(I)}\cr
&\leq C(\|\hat \lambda\|_{L^\infty(I)} + 1) \lt( \|x - \hat x\|_{L^\infty(I)} + \|u - \hat u\|_{L^2(I)} \rt)\cr
&\leq C\|u - \hat u\|_{L^2(I)},
\end{aligned}$$
where we used 
\[
\|\hat \lambda\|_{L^\infty(I)} \leq C\|\pa_x g\|_{L^\infty (I)}
\]
due to Lemma \ref{lem-d-4}. This completes the proof.
\end{proof}

\section{Differentiability of discrete control-to-state mapping}\label{app_e}

This section is devoted to prove that the discrete control-tostate mapping $G_h$ is twice differentiable. We also obtain the first and second derivatives of $G_h$.  
\begin{thm}\label{thm-4-2} We denote $x_h^s = G_h(u+sv)$ and set $y_h \in X_h^r$ be the solution of the following discretized equation:
\bq\label{eqn_yh}
B(y_h, \varphi ) = \left( \frac{\partial f}{\partial x} (t,x_h, u) y_h (t) + \frac{\partial f}{\partial u}(t, x_h, u) v(t),\,\varphi (t)\right)_{I} \quad \forall \,\varphi \in X_h^r,
\eq
where $x_h = G_h (u)$. Then we have $\frac{d}{ds} x_h^s (t) = y_h(t)$.
\end{thm}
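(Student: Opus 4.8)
The plan is to mirror the argument for the continuous analogue in Appendix~\ref{app_d} (the identity $\frac{d}{ds}G(u+sv)|_{s=0}=y$), replacing the classical Grönwall inequality by its discontinuous Galerkin counterpart, Lemma~\ref{lem-d-4}, and interpreting the conclusion as $\frac{d}{ds}x_h^s(t)\big|_{s=0}=y_h(t)$ in the sense of Lemma~\ref{lem-d-1} in Appendix~\ref{app_d}.

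First I would record well-posedness and a priori bounds. Equation \eqref{eqn_yh} is a square linear system on the finite-dimensional space $X_h^r$; rewriting it as $B(y_h,\varphi)-(\partial_x f(\cdot,x_h,u)y_h,\varphi)_I=(\partial_u f(\cdot,x_h,u)v,\varphi)_I$ and using the bound $\|\partial_x f\|_{L^\infty}+\|\partial_u f\|_{L^\infty}\le M$ from \eqref{eq-1-10}, one sees that the structural hypothesis \eqref{eq-d-r} of Lemma~\ref{lem-d-4} holds with input $\partial_u f(\cdot,x_h,u)v$. Hence, for $h$ small, $\|y_h\|_{L^\infty(I)}\le C\|v\|_{L^2(I)}$; applying the same lemma with $v=0$ shows the homogeneous system has only the trivial solution, so $y_h$ exists and is unique. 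Likewise $x_h^s=G_h(u+sv)$ exists for $h$ small, and Lemma~\ref{lem-3-5} (or, directly, Lemma~\ref{lem-d-4} applied to the difference, which only uses $L^\infty$-bounded controls) gives
\[
\|x_h^s-x_h\|_{L^\infty(I)}\le Cs\|v\|_{L^2(I)}.
\]

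Next, set $r_h^s:=x_h^s-x_h-sy_h\in X_h^r$. Subtracting the DG identity \eqref{dg_b} for $x_h$ from that for $x_h^s$ and then subtracting $s$ times \eqref{eqn_yh}, one obtains for all $\varphi\in X_h^r$
\[
B(r_h^s,\varphi)=\left(\partial_x f(\cdot,x_h,u)\,r_h^s+A_1+A_2,\ \varphi\right)_I,
\]
where, exactly as in Appendix~\ref{app_d},
\[
A_1(t):=f(t,x_h^s,u)-f(t,x_h,u)-\partial_x f(t,x_h,u)(x_h^s-x_h)(t),
\]
\[
A_2(t):=f(t,x_h^s,u+sv)-f(t,x_h^s,u)-s\,\partial_u f(t,x_h,u)v(t).
\]
Using the $W^{3,\infty}$-bound \eqref{eq-1-10} (a second-order Taylor expansion in $x$ with remainder, plus Lipschitz continuity of $\partial_u f$ in $x$) together with $\|x_h^s-x_h\|_{L^\infty(I)}\le Cs\|v\|_{L^2(I)}$ and $v\in\mathcal{U}=L^\infty$, an elementary computation gives $\|A_1\|_{L^\infty(I)}+\|A_2\|_{L^\infty(I)}\le Cs^2$, with $C$ depending on $M$, $T$, and $\|v\|_{L^\infty}$. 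Since the remainder identity reads $|B(r_h^s,\varphi)|\le C\sum_{n=1}^{N}\left(|(r_h^s,\varphi)_{I_n}|+|(A_1+A_2,\varphi)_{I_n}|\right)$, Lemma~\ref{lem-d-4} applies with $x=r_h^s$ and input $A_1+A_2$, yielding $\|r_h^s\|_{L^\infty(I)}\le C\|A_1+A_2\|_{L^2(I)}\le Cs^2$ for $h$ small. Therefore
\[
\sup_{t\in[0,T]}\left|\frac{x_h^s(t)-x_h(t)}{s}-y_h(t)\right|=\frac{1}{s}\|r_h^s\|_{L^\infty(I)}\le Cs\longrightarrow 0\quad\text{as }s\to 0,
\]
which is precisely $\frac{d}{ds}x_h^s(t)\big|_{s=0}=y_h(t)$.

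I expect the only genuine point requiring care to be the repeated verification that the structural hypothesis \eqref{eq-d-r} of Lemma~\ref{lem-d-4} is met — for $y_h$, for $x_h^s-x_h$, and for $r_h^s$ — which in each case amounts to isolating the $\partial_x f$-term and absorbing it into the $(x,\varphi)_{I_n}$ slot; the remaining work, namely the Taylor bookkeeping bounding $A_1$ and $A_2$, is identical to the continuous case already carried out in Appendix~\ref{app_d}. A minor but worth-noting subtlety is that $v$ need not keep $u+sv$ in $\mathcal{U}_{ad}$, so all intermediate estimates should be quoted in the form using only $L^\infty$-bounded controls, which is exactly what Lemma~\ref{lem-d-4} provides.
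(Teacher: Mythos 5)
Your proposal is correct and follows essentially the same route as the paper's own proof: the same remainder $r_h^s=x_h^s-x_h-sy_h$, the same Taylor terms $A_1,A_2$ with the $O(s^2)$ bounds, and the same appeal to the discrete Gr\"onwall-type Lemma \ref{lem-d-4} to conclude $\|r_h^s\|_{L^\infty(I)}\le Cs^2$. The only (harmless) difference is that you obtain existence and uniqueness of $y_h$ from the finite-dimensional linear-algebra argument via Lemma \ref{lem-d-4}, whereas the paper cites Theorem \ref{thm-21}; your added care about the structural hypothesis \eqref{eq-d-r} and about $u+sv$ possibly leaving $\mathcal{U}_{ad}$ is a welcome refinement, not a departure.
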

\begin{proof}
By Theorem \ref{thm-21} there exists a solution $y_h \in X_h^r$ to
\[
B(y_h, \varphi ) = \left( \frac{\partial f}{\partial x} (t,x_h, u) y_h (t) + \frac{\partial f}{\partial u}(t, x_h, u) v(t),\,\varphi (t)\right)_{I} \quad \forall \,\varphi \in X_h^r.
\]
By Lemma \ref{lem-d-4} we get
\begin{equation}\label{eq-d-61}
\|y_h\|_{L^{\infty}(I)} \leq C\|v\|_{L^2 (I)}.
\end{equation}
Recall that $x^s$ and $x$ satisfy
\begin{equation*}
B(x_h^s, \varphi )= \Big(f(t,x^s, u + s v), ~\varphi (t)\Big)_{I} \quad \textrm{and}\quad B(x_h (t),~\varphi ) = \Big( f (t, x, u),~\varphi (t) \Big)_{I}.
\end{equation*}
Using this, we find that $r(t) := x_h^s (t) - x_h (t) - s y_h(t)$ satisfies 
\begin{equation}\label{eq-d-43}
\begin{split}
&B\Big((x_h^s - x_h - s y_h), ~\varphi \Big)
\\
 &\quad = \left(f(t, x_h^s, u+s v) - f(t, x_h, u) - s \left( \frac{\partial f}{\partial x}(t, x_h, u) y(t) + \frac{\partial f}{\partial u}(t, x_h, u) v(t) \right),~\varphi (t)\right) 
\\
&\quad = \left(\frac{\partial f}{\partial x}(t,x_h,u) (x_h^s (t) - x_h(t) -s y_h(t))+ A_1 + A_2,~\varphi (t) \right)
\end{split}
\end{equation}
for all $\varphi \in X_h^r$, where
\begin{equation*}
A_1 = f(t, x_h^s, u) - f(t,x_h,u) - \frac{\partial f}{\partial x}(t,x_h,u) (x_h^s (t) - x_h(t))
\end{equation*}
and
\begin{equation*}
A_2 = f(t,x_h^s, u+sv) - f(t,x_h^s, u) - s \frac{\partial f}{\partial u}(t,x_h,u) v(t).
\end{equation*}
Given that $|x_h^s (t) - x_h (t)|\leq Cs$ and \eqref{eq-1-10}, an elementary calculus shows that $|A_1| \leq Cs^2$ and $|A_2| \leq Cs^2$. 
With these bounds, we may apply Lemma \ref{lem-d-4} to deduce $|r(t)| \leq Cs^2$ for $t \in [0,T]$. From this we find that
\begin{equation*}
\lim_{s \rightarrow 0} \frac{x_h^s (t) - x_h(t) - sy_h (t)}{s} = 0,
\end{equation*}
which yields that 
\[
\frac{d}{ds} x_h^s (t) = y_h(t).
\] 
This completes the proof.
\end{proof}

\begin{lem}\label{lem-d-32} The following holds.
\[
\|{G_h}' (u_1) v - {G_h}' (u_2) v\|_{L^{\infty}(I)} \leq C \|u_1 - u_2\|_{L^{2}(I)}\|v\|_{L^{\infty}(I)}.
\]
\end{lem}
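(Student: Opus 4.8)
The plan is to write down the linear discrete equation satisfied by the difference of the two directional derivatives, bound its right-hand side in $L^2(I)$, and then invoke the discrete Gr\"onwall-type estimate of Lemma \ref{lem-d-4}, exactly as in the proof of Lemma \ref{lem-3-5}. Set $y_h^{(i)} := {G_h}'(u_i)v$ and $x_h^{(i)} := G_h(u_i)$ for $i=1,2$; by Theorem \ref{thm-4-2} these belong to $X_h^r$ and satisfy \eqref{eqn_yh} with $(u, x_h)$ replaced by $(u_i, x_h^{(i)})$. Subtracting the two identities, the difference $w_h := y_h^{(1)} - y_h^{(2)} \in X_h^r$ solves
\[
B(w_h, \varphi) = \lt( \pa_x f(\cdot, x_h^{(1)}, u_1)\, w_h + R,\ \varphi \rt)_I \qquad \forall\,\varphi \in X_h^r,
\]
where
\[
R := \lt( \pa_x f(\cdot, x_h^{(1)}, u_1) - \pa_x f(\cdot, x_h^{(2)}, u_2) \rt) y_h^{(2)} + \lt( \pa_u f(\cdot, x_h^{(1)}, u_1) - \pa_u f(\cdot, x_h^{(2)}, u_2) \rt) v .
\]
Since $\pa_x f$ is bounded by \eqref{eq-1-10}, this identity has the structure required by Lemma \ref{lem-d-4} with source term $R$, so $\|w_h\|_{L^\infty(I)} \le C\|R\|_{L^2(I)}$ with $C$ independent of $h$, and it remains only to show $\|R\|_{L^2(I)} \le C\|u_1 - u_2\|_{L^2(I)}\|v\|_{L^\infty(I)}$.

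To estimate $R$, I would split each coefficient difference into a difference in the state argument plus a difference in the control argument, e.g.
\[
\pa_x f(\cdot, x_h^{(1)}, u_1) - \pa_x f(\cdot, x_h^{(2)}, u_2) = \lt( \pa_x f(\cdot, x_h^{(1)}, u_1) - \pa_x f(\cdot, x_h^{(2)}, u_1) \rt) + \lt( \pa_x f(\cdot, x_h^{(2)}, u_1) - \pa_x f(\cdot, x_h^{(2)}, u_2) \rt),
\]
and likewise for the $\pa_u f$ term. By \eqref{eq-1-10} the first summand is pointwise bounded by $C\|x_h^{(1)} - x_h^{(2)}\|_{L^\infty(I)} \le C\|u_1 - u_2\|_{L^2(I)}$, using Lemma \ref{lem-3-5}, while the second is pointwise bounded by $C|u_1(t) - u_2(t)|$. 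Pairing these bounds against $y_h^{(2)}$ and against $v$, using $\|y_h^{(2)}\|_{L^\infty(I)} \le C\|v\|_{L^2(I)}$ from \eqref{eq-d-61} together with $\|v\|_{L^2(I)} \le \sqrt{T}\,\|v\|_{L^\infty(I)}$, and applying H\"older's inequality to the products (so that, for instance, $\|(u_1 - u_2)\,y_h^{(2)}\|_{L^2(I)} \le \|u_1 - u_2\|_{L^2(I)}\|y_h^{(2)}\|_{L^\infty(I)}$ and $\|(u_1 - u_2)\,v\|_{L^2(I)} \le \|u_1 - u_2\|_{L^2(I)}\|v\|_{L^\infty(I)}$), each of the four resulting contributions to $\|R\|_{L^2(I)}$ is bounded by $C\|u_1 - u_2\|_{L^2(I)}\|v\|_{L^\infty(I)}$, as desired.

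The point requiring attention is this last step: the control-argument differences are only pointwise $O(|u_1 - u_2|)$, not $O(\|u_1 - u_2\|_{L^\infty(I)})$, so they must be kept inside an $L^2(I)$ pairing against $y_h^{(2)} \in L^\infty(I)$ or against $v \in L^\infty(I)$ rather than estimated in $L^\infty(I)$ directly; it is precisely the term $\lt( \pa_u f(\cdot, x_h^{(2)}, u_1) - \pa_u f(\cdot, x_h^{(2)}, u_2) \rt) v$ that forces the $L^\infty$-norm of $v$ onto the right-hand side. The remaining verifications — that the difference equation indeed fits the hypothesis of Lemma \ref{lem-d-4}, and that all constants are independent of $h$ — are routine and parallel to the proof of Lemma \ref{lem-3-5}.
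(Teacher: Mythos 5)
Your proposal is correct and follows essentially the same route as the paper: subtract the two discrete sensitivity equations, bound the resulting source term $R$ pointwise via the Lipschitz bounds from \eqref{eq-1-10} together with Lemma \ref{lem-3-5} and \eqref{eq-d-61}, and conclude with the discrete Gr\"onwall estimate of Lemma \ref{lem-d-4}. Your explicit remark on why the control-argument differences must be paired in $L^2$ against $y_h^{(2)}$ or $v$ (forcing the $\|v\|_{L^\infty(I)}$ factor) is a correct reading of the same estimate the paper performs implicitly.
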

\begin{proof}
Let $y_h = {G_h}' (u_1) v \in X_h^r$ and $z_h = {G_h}' (u_2)v \in X_h^r$. Then we obtain
\[
B(y_h, \varphi ) = \left( \frac{\partial f}{\partial x} (t,G_h (u_1), u_1) y_h (t) + \frac{\partial f}{\partial u}(t, G_h (u_1), u_1) v(t),\,\varphi (t)\right)_{I}
\]
and
\[
B(z_h, \varphi ) = \left( \frac{\partial f}{\partial x} (t,G_h (u_2), u_2) z_h (t) + \frac{\partial f}{\partial u}(t, G_h (u_2), u_2) v(t),\,\varphi (t)\right)_{I}
\]
for all $\varphi \in X_h^r$.
Combining these equalities, we have
\begin{equation}\label{eq-d-70}
\begin{split}
B(y_h -z_h, \varphi ) &= \left( \frac{\partial f}{\partial x} (t,G_h (u_1), u_1) (y_h -z_h) (t),\,\varphi (t)\right)_{I}\\
&\quad + \left(\lt( \frac{\partial f}{\partial x} (t,G_h (u_1), u_1) - \frac{\partial f}{\partial x}(t, G_h (u_2), u_2) \rt) z_h (t),\,\varphi (t)\right)_{I}\\
&\quad + \left( \lt(\frac{\partial f}{\partial u} (t,G_h (u_1), u_1) -\frac{\partial f}{\partial u}(t, G_h (u_2), u_2)\rt) v(t),\,\varphi (t)\right)_{I}
\end{split}
\end{equation}
for all $\varphi \in X_h^r$. On the other hand, the following two inequalities hold:
\[
\begin{split}
&\lt|\lt( \frac{\partial f}{\partial x} (t,G_h (u_1), u_1) - \frac{\partial f}{\partial x}(t, G_h (u_2), u_2) \rt) z_h (t)\rt|
 \leq C( |u_1 - u_2| + |G_h (u_1) - G_h (u_2)| ) |z_h (t)| 
\end{split}
\]
and
\[
\begin{split}
&\lt|\lt(\frac{\partial f}{\partial u} (t,G_h (u_1), u_1) -\frac{\partial f}{\partial u}(t, G_h (u_2), u_2)\rt) v(t)\rt|
\leq  C( |u_1 - u_2| + |G_h (u_1) - G_h (u_2)| ) |v (t)|. 
\end{split}
\]
Given these estimates, by applying Lemma \ref{lem-d-4} to \eqref{eq-d-70}, we obtain
\[
\begin{split}
\|y_h -z_h\|_{L^{\infty}(I)}& \leq  C\big\|( |u_1 - u_2| + |G_h (u_1) - G_h (u_2)| ) |z_h (t)|\big\|_{L^2(I)} \cr
&\quad +   C\big\|( |u_1 - u_2| + |G_h (u_1) - G_h (u_2)| ) |v (t)|\big\|_{L^2(I)}
\\
& \leq C \| u_1 - u_2\|_{L^2(I)} \|v\|_{L^{\infty}(I)},
\end{split}
\]
where we used Lemma \ref{lem-3-5} in the second inequality.
\end{proof}

\begin{lem}\label{lem-e-3}
Let $z_h \in X^r_h$ be the solution of the following discretized equation:
$$\begin{aligned}
&B(z_h, \varphi) \cr
& \ = \int_0^{T}\left(\frac{\partial^2 f}{(\partial x)^2} (t,x_h,u) y_h^2(t) + 2 \frac{\partial^2 f}{\partial x \partial u} (t,x_h,u) y_h(t) v(t) + \frac{\partial^2 f}{(\partial u)^2} (t,x_h,u) v^2 (t)\right) \varphi(t) \, dt\\
& \ \quad + \int_0^{T} \frac{\partial f}{\partial x}(t,x_h,u) z_h (t) \varphi(t) \,dt
\end{aligned}$$
for any $\varphi \in X_h^r$, where $y_h \in X^r_h$ is the solution of \eqref{eqn_yh}. 
Then we have 
\begin{equation*}
\frac{d^2}{(ds)^2} G_h(u+sv)|_{s=0} = z_h(t).
\end{equation*}
\end{lem}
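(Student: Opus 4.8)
The plan is to imitate the proof of Lemma~\ref{lem-d-33} (the continuous second $s$-derivative of $s\mapsto G(u+sv)$), replacing the ODE Gr\"onwall argument there by the discrete Gr\"onwall-type estimate of Lemma~\ref{lem-d-4}. Write $x_h^s:=G_h(u+sv)$. Applying Theorem~\ref{thm-4-2} with base control $u+sv$, the derivative $y_h^s:=\tfrac{d}{ds}G_h(u+sv)=G_h'(u+sv)v\in X_h^r$ exists and solves
\begin{equation*}
B(y_h^s,\varphi)=\bigl(\partial_x f(t,x_h^s,u+sv)\,y_h^s+\partial_u f(t,x_h^s,u+sv)\,v,\ \varphi\bigr)_I\qquad\forall\,\varphi\in X_h^r,
\end{equation*}
and $y_h:=y_h^0$ solves \eqref{eqn_yh}. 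Existence of a solution $z_h\in X_h^r$ of the displayed equation for $z_h$ follows from Theorem~\ref{thm-21} (equivalently from Lemma~\ref{lem-d-4}), since by \eqref{eq-1-10} and \eqref{eq-d-61} its right-hand-side data are bounded functions on $I$.

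First I would set $r(t):=y_h^s(t)-y_h(t)-s\,z_h(t)\in X_h^r$, subtract the weak equations satisfied by $y_h^s$, $y_h$ and $z_h$, and rearrange them into the form
\begin{equation*}
B(r,\varphi)=\bigl(\partial_x f(t,x_h,u)\,r+A_1+A_2,\ \varphi\bigr)_I\qquad\forall\,\varphi\in X_h^r,
\end{equation*}
where, mirroring the proof of Lemma~\ref{lem-d-33}, $A_1$ gathers the coefficient difference $\bigl[\partial_x f(t,x_h^s,u+sv)-\partial_x f(t,x_h,u)\bigr]y_h^s$ minus its own $s$-linear part $s\bigl[\partial_{xx}f(t,x_h,u)\,y_h+\partial_{xu}f(t,x_h,u)\,v\bigr]y_h$, and $A_2$ is the companion expression with $\partial_u f$ in place of $\partial_x f\cdot y_h^s$, $v$ in place of $y_h$, and the matching second-order coefficients of $f$. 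The point is that $A_1+A_2$ is exactly the collection of first-order coefficient differences with all the quadratic pieces of the $z_h$-equation subtracted off.

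Next I would bound $A_1$ and $A_2$ pointwise. Using $\|x_h^s-x_h\|_{L^\infty(I)}\le Cs\|v\|_{L^2(I)}$ from Lemma~\ref{lem-3-5}, $\|y_h^s-y_h\|_{L^\infty(I)}\le Cs\|v\|_{L^2(I)}\|v\|_{L^\infty(I)}$ from Lemma~\ref{lem-d-32}, the uniform bound $\|y_h^s\|_{L^\infty(I)}+\|y_h\|_{L^\infty(I)}\le C\|v\|_{L^2(I)}$ from \eqref{eq-d-61}, the first-order expansion $x_h^s-x_h=s\,y_h+O(s^2)$ supplied by Theorem~\ref{thm-4-2}, and the $W^{3,\infty}$-bound \eqref{eq-1-10} on $f$, a second-order Taylor expansion of $\partial_x f$ and $\partial_u f$ about $(t,x_h,u)$ yields $\|A_1\|_{L^\infty(I)}+\|A_2\|_{L^\infty(I)}\le Cs^2$. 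Since $\partial_x f(t,x_h,u)$ is bounded, the identity above shows that $B(r,\cdot)$ meets the hypothesis \eqref{eq-d-r} of Lemma~\ref{lem-d-4} with forcing term $A_1+A_2$, so $\|r\|_{L^\infty(I)}\le C\|A_1+A_2\|_{L^2(I)}\le Cs^2$. Hence $\lim_{s\to0}s^{-1}\bigl(y_h^s(t)-y_h(t)-s\,z_h(t)\bigr)=0$, i.e.\ $\tfrac{d}{ds}y_h^s(t)|_{s=0}=z_h(t)$, and since $y_h^s=\tfrac{d}{ds}G_h(u+sv)$ this is precisely $\tfrac{d^2}{(ds)^2}G_h(u+sv)|_{s=0}=z_h(t)$.

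The one step I expect to be delicate is verifying $\|A_1\|_{L^\infty(I)}+\|A_2\|_{L^\infty(I)}=O(s^2)$: it requires matching the $s$-linear Taylor terms of $\partial_x f(t,x_h^s,u+sv)$ and $\partial_u f(t,x_h^s,u+sv)$ against the second-order coefficients in the $z_h$-equation and absorbing the remainder via the third derivatives of $f$ together with $x_h^s-x_h=s\,y_h+O(s^2)$ and $y_h^s-y_h=O(s)$. This is, however, essentially the computation already carried out in the proof of Lemma~\ref{lem-d-33}; the discrete setting changes nothing except that Lemma~\ref{lem-d-4} replaces the ODE Gr\"onwall inequality, and all constants stay independent of $h$ thanks to Lemmas~\ref{lem-3-5} and \ref{lem-d-32} and to \eqref{eq-d-61}.
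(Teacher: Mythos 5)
Your proposal is correct and follows essentially the same route as the paper's own proof: define $r = y_h^s - y_h - s z_h$, subtract the weak formulations to get $B(r,\varphi) = (\partial_x f(t,x_h,u)\,r + A_1 + A_2,\varphi)_I$ with the same $A_1, A_2$, show $|A_1|+|A_2| = O(s^2)$ using Lemma \ref{lem-d-32}, Theorem \ref{thm-4-2} and \eqref{eq-1-10}, and conclude via the discrete Gr\"onwall estimate of Lemma \ref{lem-d-4}. The only additions beyond the paper's argument are your explicit remark on the existence of $z_h$ and the more detailed bookkeeping of the Taylor expansion, both of which are consistent with the paper.
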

\begin{proof}
Let 
\[
y_h^s (t) = \frac{d}{ds}G_h(u+sv) \quad \mbox{and} \quad y_h(t)= \frac{d}{ds} G_h(u+sv)|_{s=0}.
\] 
It then follows that
\begin{equation}\label{eq-d-41}
\begin{split}
&B\Big((y_h^s)' (t) -y_h' (t) - sz_h' (t), ~\varphi (t)\Big)
\\
&\quad =: \Big(\frac{\partial f}{\partial x}(t,x_h(t), u) (y_h^s (t)- y_h (t) - sz_h(t)) + A_1 (t) + A_2 (t),~\varphi (t) \Big)
\end{split}
\end{equation}
where
\begin{equation*}
\begin{split}
A_1 (t)&:= \left[\frac{\partial f}{\partial x}(t, x_h^s, u+sv) - \frac{\partial f}{\partial x}(t,x_h,u)\right] y_h^s (t) 
\\
&\quad + s\left[ \frac{\partial^2 f}{(\partial x)^2} (t,x_h, u) y_h(t) + \frac{\partial^2 f}{\partial x \partial u}(t,x_h, u) v(t) \right] y_h(t),
\end{split}
\end{equation*}
\begin{equation*}
\begin{split}
A_2 (t)&:=\left[\frac{\partial f}{\partial u}(t,x_h^s, u+sv) - \frac{\partial f}{\partial u}(t,x_h, u) \right] v(t) 
\\
&\quad - s \left[ \frac{\partial^2 f}{(\partial u)^2} (t,x_h, u) v(t) + \frac{\partial^2 f}{\partial x \partial u} (t,x_h, u) y_h(t) \right]v(t).
\end{split}
\end{equation*}
We obtain from Lemma \ref{lem-d-32} the estimate $|y_h^s (t) - y_h(t)|\leq Cs$. Upon this estimate and that $\frac{d}{ds} x_h^s (t)|_{s=0} = y_h(t)$ from Lemma \ref{thm-4-2}, an elementary calculus reveals that $|A_1 (t)| \leq Cs^2$ and $|A_2 (t)| \leq Cs^2$. Putting this estimate into \eqref{eq-d-41} and using Lemma \ref{lem-d-4}, we find 
\begin{equation*}
y^s (t) - y(t) - sz(t) = O(s^2).
\end{equation*}
This yields that 
\[
\frac{d}{ds} y_h^s (t)|_{s=0} = z_h(t),
\] 
and so we have 
\[
\frac{d^2}{(ds)^2} G_h(u+sv)|_{s=0} =z_h(t)
\] 
since 
\[
y_h^s (t) = \frac{d}{ds} G_h(u+sv).
\] 
The proof is done.
\end{proof}

\section{Derivations of the first order derivative of cost functionals}\label{app_a}

In this part, we give the proofs of Lemma \ref{lem-30} and Lemma \ref{lem-35}. Before presenting it, we shall explain how to derive the discrete adjoint equation \eqref{eq-2-1} from the Lagrangian associated to \eqref{main_eq4}.

Let us first write the Lagrangian of the problem \eqref{main_eq1} and \eqref{dg_b} as follows:
\begin{align}\label{dis_lag}
\begin{aligned}
\mathcal{L}_h({x_h},u,\lambda) &:=  \int_0^T g(t,x_h(t),u(t))\,dt + B(x_h,\lambda_h) - \lt(f(\cdot,x_h,u),\lambda_h \rt)_I - (x_0, \lambda^+_{h,0}),
\end{aligned}
\end{align}
where the bilinear operator $B(\cdot,\cdot)$ is given by \eqref{dg_b}. If we compute the functional derivatives of the above Lagrangian \eqref{dis_lag} with respect to the adjoint state $\lambda_h$, then $\delta\ml_h/\delta \lambda_h = 0$ leads \eqref{dg_b}. We now derive the equation of discrete adjoint state. Using the integration by parts, we find
\[
B(x_h, \lambda_h) = - \sum_{n=1}^N (x_h, \lambda_h')_{I_n} - \sum_{n=1}^{N-1} (x_{h,n}^-, [\lambda_h]_n) + (x_{h,N}^-, \lambda_{h,N}^-).
\]
This enables us to rewrite the Lagrangian \eqref{dis_lag} as
$$\begin{aligned}
\mathcal{L}_h(x,u,\lambda) &=  \int_0^T g(t,x_h(t),u(t))\,dt - \sum_{n=1}^N (x_h, \lambda_h')_{I_n} - \lt(f(\cdot,x_h,u),\lambda_h \rt)_I  \cr
&\quad  - \sum_{n=1}^{N-1} (x_{h,n}^-, [\lambda_h]_n) + (x_{h,N}^-, \lambda_{h,N}^-) - (x_0, \lambda^+_{h,0}),
\end{aligned}$$
and this further implies
\begin{align}\label{dis_adj}
\begin{aligned}
0&=\frac{\delta\ml_h(x,u,\lambda)}{\delta x_h}(\psi_h)\cr 
&=\int_0^T \frac{\pa g}{\pa x}(t,x_h(t),u(t)) \psi_h(t) \,dt - \sum_{n=1}^N  (\psi_h, \lambda_h')_{I_n} - \lt(\frac{\pa f}{\pa x}(\cdot,x_h,u)\psi_h,\lambda_h \rt)_I  \cr
&\quad - \sum_{n=1}^{N-1} (\psi_{h,n}^-, [\lambda_h]_n) + (\psi_{h,N}^-, \lambda_{h,N}^-)
\\
&= \int_0^T \frac{\pa g}{\pa x}(t,x_h(t),u(t)) \psi_h(t) \,dt -   \lt(\frac{\pa f}{\pa x}(\cdot,x_h,u)\psi_h,\lambda_h \rt)_I + B(\psi_h, \lambda_h)
\end{aligned}
\end{align}
for all $\psi_h \in X^r_h$, where we applied the integraion by parts for $(\psi_h, {\lambda_h}')_{I_n}$ to derive the second equality. The above equality corresponds to the adjoint equation \eqref{eq-2-1}.

%%%%%%%%%%%%%%%%%%%%%%%%%%%%%%%%%%%%%%%%%%%%%%%%%%%%%%%%%%%%%
%
%
% \section{Proofs of Lemma \ref{lem-30} and Lemma \ref{lem-35} }
%
%%%%%%%%%%%%%%%%%%%%%%%%%%%%%%%%%%%%%%%%%%%%%%%%%%%%%%%%%%%%%%%%%

\begin{proof}[Proof of Lemma \ref{lem-30}]
In order to compute the functional derivative of $j$ with respect to $u$, we consider $j(u+sv) = J(u + sv, G(u+sv))$ with $v \in \U$ and $s \in \R_+$. If we set $x^s(t) := G(u(t)+sv(t))$ it follows from Lemma \ref{lem-d-1} that $y = \frac{d}{ds} x^s (t)|_{s=0}$ satisfies
\begin{equation}\label{eq-b1}
y' (t) = \frac{\partial f}{\partial x} (t,x, u)y(t) + \frac{\partial f}{\partial u} (t,x,u) v(t),
\end{equation}
with the initial condition $y(0)=0$. Recall from \eqref{eq-1-4} that the adjoint state $\lambda (t) = \lambda(u)(t)$ satisfies
\begin{equation}\label{app_y1}
\lambda' (t) = \frac{\partial g}{\partial x}(t,x,u) - \lambda (t) \frac{\partial f}{\partial x}(t,x,u).
\end{equation}
Then we have
$$\begin{aligned}
j'(u)v &= \frac{d}{ds}j(u + sv)\bigg|_{s=0}\cr
&= \int_0^T \frac{\pa g}{\pa u} (t,x(t), u(t))v(t)\,dt + \int_0^T\frac{\pa g}{\pa x} (t,x(t), u(t))y(t)\,dt \cr
&= \int_0^T \lt(\frac{\pa g}{\pa u} (t,x(t), u(t)) - \lambda (t) \frac{\partial f}{\partial u}(t,x(t),u(t)) \rt)v(t)\,dt,
\end{aligned}$$
where we used
$$\begin{aligned}
\int_0^T\frac{\pa g}{\pa x} (t,x(t), u(t))y(t)\,dt &= \int_0^T \lt(\lambda' (t) + \lambda (t) \frac{\partial f}{\partial x}(t,x(t),u(t)) \rt) y(t)\,dt\cr
&=-\int_0^T \lambda (t) \frac{\partial f}{\partial u}(t,x(t),u(t)) v(t)\,dt,
\end{aligned}$$
due to \eqref{eq-b1}, \eqref{app_y1}, $y(0) = 0$, and $\lambda(T) = 0$.
\end{proof}

\begin{proof}[Proof of Lemma \ref{lem-35}]
The proof is very similar to Lemma \ref{lem-30}. We consider $j_h(u+sv) = J(u+sv, G_h(u + sv))$ with $v \in \U$ and $s \in \R_+$. We recall from Lemma \ref{thm-4-2} that the function $x_{h}^s:= G_h(u + sv)$ is differentiable at $s=0$ with 
\[
\frac{d}{ds} x_h^s |_{s=0} = y_h,
\] 
where $y_h \in X_h^r$ satisfies the following equation:
\bq\label{eq_w}
B(y_h,\varphi) = \lt(\frac{\pa f}{\pa x}(\cdot,x_h,u) y_h + \frac{\pa f}{\pa u} (\cdot,x_h, u)v,\varphi \rt)_I \quad \forall\, \varphi \in X^r_h.
\eq
Using this, we obtain
\begin{align}\label{eqn_jk}
\begin{aligned}
j'_h(u)v &= \frac{d}{ds}j_h(u+sv)\bigg|_{s=0} \cr
&= \int_0^T \frac{\pa g}{\pa u} (t,x_h(t), u(t))v(t)\,dt + \int_0^T \frac{\pa g}{\pa x} (t,x_h(t), u(t))y_h(t)\,dt.
\end{aligned}
\end{align}
We then take $\psi_h = y_h$ in \eqref{dis_adj} to get
$$\begin{aligned}
&\int_0^T \frac{\pa g}{\pa x} (t,x_h(t), u(t))y_h(t)\,dt \cr
&\quad = \sum_{n=1}^N (y_h, \lambda_h')_{I_n} + \lt(\frac{\pa f}{\pa x}(\cdot,x_h,u)y_h,\lambda_h \rt)_I  + \sum_{n=1}^{N-1} (y_{h,n}^-, [\lambda_h]_n) - (y_{h,N}^-, \lambda_{k,N}^-).
\end{aligned}$$
On the other hand, by using the integration by parts, we find
$$\begin{aligned}
&\sum_{n=1}^N (y_h, \lambda_h')_{I_n} + \sum_{n=1}^{N-1} (y_{h,n}^-, [\lambda_h]_n) - (y_{h,N}^-, \lambda_{h,N}^-) \cr
&\quad = - \sum_{n=1}^N (y_h', \lambda_h)_{I_n} - \sum_{n=2}^N ([y_h]_{n-1}, \lambda_{h,n-1}^+) - (y_{h,0}^+, \lambda_{h,0}^+) \cr
&\quad = - B(w_h, \lambda_h),
\end{aligned}$$
where $B(\cdot,\cdot)$ is appeared in \eqref{def_B}. This yields
$$\begin{aligned}
\int_0^T \frac{\pa g}{\pa x} (t,x_h(t), u(t))y_h(t)\,dt &= -B(y_h, \lambda_h) +  \lt(\frac{\pa f}{\pa x}(\cdot,x_h,u)y_h,\lambda_h \rt)_I\cr
&=-\lt(\frac{\pa f}{\pa u} (\cdot,x_h, u)v, \lambda_h \rt)_I
\end{aligned}$$
due to \eqref{eq_w}. This together with \eqref{eqn_jk} concludes
\[
j_h'(u)v = \int_0^T \lt(\frac{\pa g}{\pa u} (t,x_h(t), u(t)) - \frac{\pa f}{\pa u}(t,x_h(t), u(t))\lambda_h(t)\rt) v(t)\,dt,
\]
where $v \in \U$.
\end{proof}

%%%%%%%%%%%%%%%%%%%%%%%%%%%%%%%%%%%%%%%%%%%%%%%%%%%%%%%%%%%%%
%
%
% \section{Derivation of the second order derivative of objective function }
%
%%%%%%%%%%%%%%%%%%%%%%%%%%%%%%%%%%%%%%%%%%%%%%%%%%%%%%%%%%%%%%%%%

\section{Derivations of the second order derivative of cost functionals}\label{app_b}

In this appendix, we provide details of the derivation of the second order derivative of cost functional $j$ and its  discrete version $j_h$. 

\begin{lem}\label{lem_2j} Let $j$ be the cost functional for the optimal control problem \eqref{main_eq1}-\eqref{main_eq2}. Then, for $u \in \U_{ad}$ and $v \in \U$, we have
$$\begin{aligned}
j''(u)(v,v) & = - \int_0^{T} \lambda(t)\lt(\frac{\partial^2 f}{(\partial x)^2} (t,x(t),u(t)) y^2(t) + 2 \frac{\partial^2 f}{\partial x \partial u} (t,x(t),u(t)) y(t)v(t)\rt) dt \cr
&\quad  - \int_0^{T} \lambda(t) \frac{\partial^2 f}{(\partial u)^2} (t,x(t),u(t)) v^2(t)\,dt + \int_0^T \frac{\partial^2 g}{(\partial x)^2} (t,x(t),u(t)) y^2(t)\,dt
\\
&\quad + \int_0^{T}  2 \frac{\partial^2 g}{\partial x \partial u} (t,x,u) y(t) v(t) \,dt + \int_0^T \frac{\partial^2 g}{(\partial u)^2} (t,x(t),u(t)) v^2 (t) \,dt.
\end{aligned}$$

\end{lem}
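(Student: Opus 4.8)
The plan is to differentiate $j(u+sv)=\int_0^T g(t,x^s(t),u(t)+sv(t))\,dt$ twice in $s$ at $s=0$, where $x^s:=G(u+sv)$, and then to eliminate the second-order state variation by means of the adjoint equation. By Lemma~\ref{lem-d-1} we have $\frac{d}{ds}x^s|_{s=0}=y$ with $y=G'(u)v$, and by Lemma~\ref{lem-d-33} we have $\frac{d^2}{(ds)^2}x^s|_{s=0}=z$, where $z$ is the solution (with $z(0)=0$) of the linear ODE recorded in that lemma. The regularity assumption \eqref{eq-1-10}, together with the uniform bounds on $x^s$, $\frac{d}{ds}x^s$, $\frac{d^2}{(ds)^2}x^s$ coming from Appendix~\ref{app_d}, justifies differentiating under the integral sign, so the chain rule gives
\[
j''(u)(v,v)=\int_0^T \lt(\frac{\pa^2 g}{(\pa x)^2}\,y^2 + 2\frac{\pa^2 g}{\pa x\,\pa u}\,yv + \frac{\pa^2 g}{(\pa u)^2}\,v^2 + \frac{\pa g}{\pa x}\,z\rt)dt,
\]
where all partial derivatives of $g$ are evaluated at $(t,x(t),u(t))$.

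The only term not yet in the desired form is $\int_0^T \frac{\pa g}{\pa x}\,z\,dt$, which I would rewrite using the adjoint state $\lambda=\lambda(u)$. From \eqref{eq-1-51} we have $\frac{\pa g}{\pa x}=\lambda'+\frac{\pa f}{\pa x}\lambda$ (with the index convention of Definition~\ref{def-3-1}), while the ODE defining $z$ gives $\frac{\pa f}{\pa x}\,z=z'-\big(\frac{\pa^2 f}{(\pa x)^2}y^2+2\frac{\pa^2 f}{\pa x\,\pa u}yv+\frac{\pa^2 f}{(\pa u)^2}v^2\big)$. Substituting these two identities and using that $\langle\frac{\pa f}{\pa x}\lambda,z\rangle=\langle\lambda,\frac{\pa f}{\pa x}z\rangle$ for the relevant contraction, I obtain
\[
\int_0^T \frac{\pa g}{\pa x}\,z\,dt=\int_0^T (\lambda'z+\lambda z')\,dt-\int_0^T \lambda\lt(\frac{\pa^2 f}{(\pa x)^2}y^2+2\frac{\pa^2 f}{\pa x\,\pa u}yv+\frac{\pa^2 f}{(\pa u)^2}v^2\rt)dt.
\]
Since $\int_0^T(\lambda'z+\lambda z')\,dt=\int_0^T \frac{d}{dt}(\lambda z)\,dt=\lambda(T)z(T)-\lambda(0)z(0)=0$ by $\lambda(T)=0$ and $z(0)=0$, inserting this into the expression for $j''(u)(v,v)$ produces exactly the claimed formula.

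I expect the only genuinely delicate point to be the bookkeeping with the vector/tensor notation: since $x,\lambda\in\mr^d$ and $f$ is $\mr^d$-valued, $\frac{\pa f}{\pa x}$ is a matrix and $\frac{\pa^2 f}{(\pa x)^2}$ a bilinear form, so the products above must be read with the contractions already in force in Lemma~\ref{lem-d-33} and Definition~\ref{def-3-1} (in particular the adjoint of $\frac{\pa f}{\pa x}$ is what makes the integration by parts close). Once that convention is fixed, every manipulation above is the scalar computation applied componentwise, and justifying the interchange of $\frac{d^2}{(ds)^2}$ with $\int_0^T$ is routine given \eqref{eq-1-10} and the appendix bounds. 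The discrete analogue (Lemma~\ref{lem_2jk}) is then proved along identical lines, with $B(\cdot,\cdot)$ in place of the integral pairing, summation-by-parts in place of integration by parts, and \eqref{dis_adj} together with Lemma~\ref{lem-e-3} supplying the discrete adjoint identity and the second-order discrete state variation.
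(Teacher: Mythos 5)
Your proposal is correct and follows essentially the same route as the paper's proof: differentiate twice under the integral using the first and second state variations $y$ and $z$ from Lemmas \ref{lem-d-1} and \ref{lem-d-33}, then eliminate the $\int_0^T \frac{\partial g}{\partial x}\,z\,dt$ term by substituting the adjoint equation \eqref{app_y1} and the ODE for $z$, with the boundary term killed by $\lambda(T)=0$ and $z(0)=0$. The only cosmetic difference is that you package the integration by parts as $\int_0^T \frac{d}{dt}(\lambda z)\,dt=0$, whereas the paper moves the derivative from $\lambda' z$ onto $-\lambda z'$ explicitly; the computations are identical.
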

\begin{proof}
Similarly as in Appendix \ref{app_a}, we consider $j(u+sv) = J(u + sv, G(u+sv))$ with $v \in \U$ and $s \in \R_+$ and set $x^s(t) := G(u(t)+sv(t))$. By Lemma \ref{lem-d-1} and Lemma \ref{lem-d-33} it follows that 
\[
\frac{d}{ds}x^s|_{s=0} = y \quad \mbox{and} \quad \frac{d^2}{(ds)^2}x^s|_{s=0}= z,
\] 
where $y \in X$ is given as in \eqref{eq-b1} and  $z \in X$ is the solution to
$$\begin{aligned}
z' (t) &= \frac{\partial^2 f}{(\partial x)^2} (t,x(t),u(t)) y^2(t) + 2 \frac{\partial^2 f}{\partial x \partial u} (t,x(t),u(t)) y(t)v(t) + \frac{\partial^2 f}{(\partial u)^2} (t,x(t),u(t)) v^2(t) 
\\
&\quad + \frac{\partial f}{\partial x}(t,x(t),u(t)) z(t),
\end{aligned}$$
with the initial condition $z(0) = 0$. Then we obtain
\begin{align}\label{eqn_j2}
\begin{aligned}
j''(u)(v,v) &= \frac{d^2}{ds^2} j(u+sv)\bigg|_{s=0}\cr
&=\frac{d^2}{ds^2} \int_0^{T} g(t,x^s(t), u(t)+sv(t)) \,dt \bigg|_{s=0}
\\
& = \int_0^{T} \frac{\partial g}{\partial x} (t,x(t),u(t)) z(t) \,dt + \int_0^T \frac{\partial^2 g}{(\partial x)^2} (t,x(t),u(t)) y^2(t)\,dt
\\
&\quad + \int_0^{T}  2 \frac{\partial^2 g}{\partial x \partial u} (t,x,u) y(t) v(t) \,dt + \int_0^T \frac{\partial^2 g}{(\partial u)^2} (t,x(t),u(t)) v^2 (t) \,dt.
\end{aligned}
\end{align}
On the other hand, we use \eqref{app_y1} to get
$$\begin{aligned}
&\int_0^{T} \frac{\partial g}{\partial x}(t,x(t),u(t)) z(t) \,dt
\\
&\quad =  \int_{0}^{T} \lambda' (t) z(t) \,dt + \int_0^{T} \frac{\partial f}{\partial x}(t,x(t),u(t)) \lambda (t) z(t) \,dt
\\
&\quad = -\int_0^{T} \lambda(t) z' (t) \,dt+\int_0^{T} \frac{\partial f}{\partial x}(t,x(t),u(t)) \lambda (t) z(t) \,dt
\\
&\quad =- \int_0^{T} \lambda(t)\lt(\frac{\partial^2 f}{(\partial x)^2} (t,x(t),u(t)) y^2(t) + 2 \frac{\partial^2 f}{\partial x \partial u} (t,x(t),u(t)) y(t)v(t)\rt) dt \cr
&\qquad  - \int_0^{T} \lambda(t) \frac{\partial^2 f}{(\partial u)^2} (t,x(t),u(t)) v^2(t) \,dt, 
\end{aligned}$$
where we used $\lambda(T) = 0$ and $z(0) =0$. By combining the above with \eqref{eqn_j2}, we have
$$\begin{aligned}
j''(u)(v,v) & = - \int_0^{T} \lambda(t)\lt(\frac{\partial^2 f}{(\partial x)^2} (t,x(t),u(t)) y^2(t) + 2 \frac{\partial^2 f}{\partial x \partial u} (t,x(t),u(t)) y(t)v(t)\rt) dt \cr
&\quad  - \int_0^{T} \lambda(t) \frac{\partial^2 f}{(\partial u)^2} (t,x(t),u(t)) v^2(t) dt + \int_0^T \frac{\partial^2 g}{(\partial x)^2} (t,x(t),u(t)) y^2(t)\,dt \\
&\quad + \int_0^{T}  2 \frac{\partial^2 g}{\partial x \partial u} (t,x,u) y(t) v(t) \,dt + \int_0^T \frac{\partial^2 g}{(\partial u)^2} (t,x(t),u(t)) v^2 (t) \,dt.
\end{aligned}$$
This completes the proof.
\end{proof}
Next we proceed the similar calculation for the approximate solution. 
\begin{lem}\label{lem_2jk} Let $j_h$ be the discrete cost functional for the optimal control problem \eqref{main_eq1}-\eqref{main_eq2}. Then, for $u \in \U_{ad}$ and $v \in \U$, we have
$$\begin{aligned}
&j''_h (u) (v,v)\cr
&\, =  -\int_0^{T}\left(\frac{\partial^2 f}{(\partial x)^2} (t,x_h,u) y_h^2(t) + 2 \frac{\partial^2 f}{\partial x \partial u} (t,x_h,u) y_h(t) v(t) + \frac{\partial^2 f}{(\partial u)^2} (t,x_h,u) v^2 (t)\right) \lambda_h(t) \, dt
\\
&\, \quad + \int_0^{T}  \lt(\frac{\partial^2 g}{(\partial x)^2} (t,x_h,u) y_h^2 (t) + 2 \frac{\partial^2 g}{\partial x \partial u} (t,x_h,u) y_h (t) v(t) + \frac{\partial^2 g}{(\partial u)^2} (t,x_h,u) v^2(t)\rt) dt.
\end{aligned}$$

\end{lem}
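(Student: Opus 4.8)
The plan is to mimic the proof of Lemma~\ref{lem_2j} in the discrete setting: differentiate $s\mapsto j_h(u+sv)$ twice at $s=0$ using the differentiability of the discrete control-to-state map from Theorem~\ref{thm-4-2} and Lemma~\ref{lem-e-3}, and then eliminate the term involving the second derivative $z_h$ of $G_h$ by testing the discrete adjoint equation \eqref{dis_adj} against $z_h$, exactly as the continuous proof uses integration by parts against $\lambda$.

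Concretely, I would set $x_h^s:=G_h(u+sv)$ and recall that $s\mapsto x_h^s$ is twice differentiable at $s=0$ with $\tfrac{d}{ds}x_h^s|_{s=0}=y_h$, the solution of \eqref{eqn_yh}, and $\tfrac{d^2}{(ds)^2}x_h^s|_{s=0}=z_h$, the solution of the discretized equation in Lemma~\ref{lem-e-3}. Since $j_h(u+sv)=\int_0^T g(t,x_h^s(t),u(t)+sv(t))\,dt$, applying the chain rule and differentiating under the integral sign (justified by \eqref{eq-1-10} together with the differentiability of $s\mapsto x_h^s$ in $L^\infty(I)$) yields
$$
j_h''(u)(v,v)=\int_0^T \frac{\pa g}{\pa x}(t,x_h,u)\,z_h(t)\,dt+\int_0^T\!\lt(\frac{\pa^2 g}{(\pa x)^2}(t,x_h,u)y_h^2+2\frac{\pa^2 g}{\pa x\pa u}(t,x_h,u)y_h v+\frac{\pa^2 g}{(\pa u)^2}(t,x_h,u)v^2\rt)dt.
$$

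The key step is to rewrite the first term. Since $z_h\in X_h^r$, taking $\psi_h=z_h$ in \eqref{dis_adj} gives $\int_0^T \tfrac{\pa g}{\pa x}(t,x_h,u)z_h\,dt=\bigl(\tfrac{\pa f}{\pa x}(\cdot,x_h,u)z_h,\lambda_h\bigr)_I-B(z_h,\lambda_h)$, while taking the test function $\varphi=\lambda_h\in X_h^r$ in the defining equation of $z_h$ from Lemma~\ref{lem-e-3} gives $B(z_h,\lambda_h)=\int_0^T\bigl(\tfrac{\pa^2 f}{(\pa x)^2}y_h^2+2\tfrac{\pa^2 f}{\pa x\pa u}y_h v+\tfrac{\pa^2 f}{(\pa u)^2}v^2\bigr)\lambda_h\,dt+\bigl(\tfrac{\pa f}{\pa x}(\cdot,x_h,u)z_h,\lambda_h\bigr)_I$. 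Subtracting, the two $\tfrac{\pa f}{\pa x}z_h\lambda_h$ contributions cancel and one is left with $\int_0^T \tfrac{\pa g}{\pa x}(t,x_h,u)z_h\,dt=-\int_0^T\bigl(\tfrac{\pa^2 f}{(\pa x)^2}y_h^2+2\tfrac{\pa^2 f}{\pa x\pa u}y_h v+\tfrac{\pa^2 f}{(\pa u)^2}v^2\bigr)\lambda_h\,dt$. Substituting this back into the expression for $j_h''(u)(v,v)$ produces the claimed formula.

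I do not expect a genuine difficulty here; the one point requiring care is the bookkeeping with the bilinear form $B$, which is not symmetric because of the jump and initial terms in \eqref{def_B}, so one must ensure that $z_h$ and $\lambda_h$ occupy the same arguments of $B$ when combining \eqref{dis_adj} (with $\psi_h=z_h$) and the $z_h$-equation (with $\varphi=\lambda_h$) — both indeed yield precisely $B(z_h,\lambda_h)$, so the cancellation goes through. The only other routine preliminary is the interchange of $\tfrac{d}{ds}$ with the time integral, which is immediate from the regularity of $g$ and the $L^\infty$-differentiability of $x_h^s$ noted above.
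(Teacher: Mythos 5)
Your proposal is correct and follows essentially the same route as the paper: differentiate $s\mapsto j_h(u+sv)$ twice at $s=0$ using Theorem~\ref{thm-4-2} and Lemma~\ref{lem-e-3}, then eliminate the $z_h$-term by testing the discrete adjoint equation with $\psi_h=z_h$ and the $z_h$-equation with $\varphi=\lambda_h$, so that the two $\bigl(\tfrac{\partial f}{\partial x}z_h,\lambda_h\bigr)_I$ contributions cancel. Your remark about the asymmetry of $B$ and the need for both identities to produce exactly $B(z_h,\lambda_h)$ is precisely the bookkeeping the paper's computation relies on.
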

\begin{proof} Similarly as in the proof of Lemma \ref{lem-35}, we consider $j_h(u+sv) = J(u+sv, G_h(u + sv))$ with $v \in \U$ and $s \in \R_+$ and set $x_{h}^s:= G_h(u + sv)$. We recall from  Theorem \ref{thm-4-2} and Theorem \ref{lem-e-3} that 
\[
\frac{d}{ds} x_h^s|_{s=0}  = y_h \quad \mbox{and} \quad \frac{d^2}{(ds)^2}x_h^s|_{s=0} =z_h,
\] 
where $z_h \in X^r_h$ satisfies
$$\begin{aligned}
&B(z_h, \varphi) \cr
& \ = \int_0^{T}\left(\frac{\partial^2 f}{(\partial x)^2} (t,x_h,u) y_h^2(t) + 2 \frac{\partial^2 f}{\partial x \partial u} (t,x_h,u) y_h(t) v(t) + \frac{\partial^2 f}{(\partial u)^2} (t,x_h,u) v^2 (t)\right) \varphi(t) \, dt\\
& \ \quad + \int_0^{T} \frac{\partial f}{\partial x}(t,x_h,u) z_h (t) \varphi(t) \,dt.
\end{aligned}$$
Now a straightforward computation gives
$$\begin{aligned}
j''_h (u) (v,v)&=\frac{d^2}{ds^2} \int_0^{T} g(t,x_h^s(t), u(t)+sv(t)) \,dt\bigg|_{s=0}
\\
& = \int_0^{T} \frac{\partial g}{\partial x} (t,x_h(t),u(t)) z_h(t) \,dt + \int_0^{T} \frac{\partial^2 g}{(\partial x)^2} (t,x_h(t),u(t)) y_h^2 (t) \,dt
\\
&\quad + \int_0^{T}  2 \frac{\partial^2 g}{\partial x \partial u} (t,x_h(t),u(t)) y_h (t) v(t) \,dt + \int_0^{T} \frac{\partial^2 g}{(\partial u)^2} (t,x_h(t),u(t)) v^2 (t) \,dt.
\end{aligned}$$
Note that the discrete adjoint state $\lambda_h(t) = \lambda_h(u)(t)$ satisfies
\[
- B(\psi, \lambda_h ) + \left( \frac{\partial f}{\partial x} (t,x_h, u)\lambda_h, \psi\right)_I = \left( \frac{\partial g}{\partial x}(t, x_h, u), \psi\right)_I
\]
for all $\psi \in X^r_h$. Thus by considering $\psi = z_h \in X^r_h$, we find 
$$\begin{aligned}
 &\left( \frac{\partial g}{\partial x}(t, x_h, u), z_h\right)_I
 \\
&\quad =-B(z_h, \lambda_h ) + \left( \frac{\partial f}{\partial x} (t,x_h, u)\lambda_h, z_h\right)_I
\\
&\quad = -\int_0^{T}\left( \frac{\partial^2 f}{(\partial x)^2} (t,x_h,u) y_h^2(t) + 2 \frac{\partial^2 f}{\partial x \partial u} (t,x_h,u) y_h(t) v(t) + \frac{\partial^2 f}{(\partial u)^2} (t,x_h,u) v^2 (t)\right) \lambda_h(t) \, dt.
\end{aligned}$$
Combining the above equalities, we have
$$\begin{aligned}
&j''_h (u) (v,v)\cr
&\ =  -\int_0^{T}\left(\frac{\partial^2 f}{(\partial x)^2} (t,x_h,u) y_h^2(t) + 2 \frac{\partial^2 f}{\partial x \partial u} (t,x_h,u) y_h(t) v(t) + \frac{\partial^2 f}{(\partial u)^2} (t,x_h,u) v^2 (t)\right) \lambda_h(t) \, dt
\\
&\ \quad + \int_0^{T} \lt(\frac{\partial^2 g}{(\partial x)^2} (t,x_h,u) y_h^2 (t) + 2 \frac{\partial^2 g}{\partial x \partial u} (t,x_h,u) y_h (t) v(t) + \frac{\partial^2 g}{(\partial u)^2} (t,x_h,u) v^2(t)\rt) dt.
\end{aligned}$$
This completes the proof.
\end{proof}

%%%%%%%%%%%%%%%%%%%%%%%%%%%%%%%%%%%%%%%%%%%%%%%%%%%%%%
%
% \begin{thebibliography}{20}
%
%%%%%%%%%%%%%%%%%%%%%%%%%%%%%%%%%%%%%%%%%%%%%%%%%%%%%%%%%

\end{document}